\date{\today}
\newcounter{cprop}[section]
\renewcommand{\Lip}{\operatorname{Lip}}
\renewcommand{\P}{ {\mathbb{P}} }
\newtheorem{theorem}[cprop]{Theorem}
\newtheorem*{theorem*}{Theorem}
\theoremstyle{plain}
\newtheorem*{ackno*}{Acknowledgements}
\newtheorem{assumption}[cprop]{Assumption}
\newtheorem{corollary}[cprop]{Corollary}
\newtheorem{example}[cprop]{Example}
\newtheorem{lemma}[cprop]{Lemma}
\newtheorem{proposition}[cprop]{Proposition}
\newtheorem{remark}[cprop]{Remark}
\numberwithin{equation}{section}
\definecolor{gray}{rgb}{0.75, 0.75, 0.75}
\newcommand{\todo}[1]{}
\newcommand{\gray}[1]{}
\newcommand{\blue}[1]{#1}
\newcommand{\revP}[1]{#1}
\newcommand{\revB}[1]{#1}
\begin{document}
\def\cprime{$'$}
\def\cprime{$'$}

\title{Regularization by noise for stochastic Hamilton-Jacobi equations}
\begin{abstract}
We study regularizing effects of nonlinear stochastic perturbations for fully nonlinear PDE. More precisely, path-by-path $L^{\infty}$ bounds for the second derivative of solutions to such PDE are shown. These bounds are expressed as solutions to reflected SDE and are shown to be optimal.
\end{abstract}

\author[P. Gassiat]{Paul Gassiat}
\address{Ceremade, Universit\'e de Paris-Dauphine\\
Place du Mar\'echal-de-Lattre-de-Tassigny\\
75775 Paris cedex 16, France}
\email{gassiat@ceremade.dauphine.fr}

\author[B. Gess]{Benjamin Gess}
\address{Max-Planck Institute for Mathematics in the Sciences \\
04103 Leipzig, Germany, Fakultät für Mathematik, Universität Bielefeld, D-33501 Bielefeld, Germany}
\email{bgess@mis.mpg.de}

\keywords{Stochastic Hamilton-Jacobi equations; regularization by noise, reflected SDE, stochastic $p$-Laplace equation, stochastic total variation flow}

\subjclass[2000]{60H15, 65M12, 35L65.}

\maketitle

\section{Introduction}

The purpose of this paper is to provide sharp, pathwise estimates for the $L^{\infty}$ norm of the second derivative of solutions to a class of SPDE of the type 
\begin{equation} \label{eq:spde}
du+\frac{1}{2}|Du|^{2}\circ d\xi_{t}=F(x,u,Du,D^{2}u)\, dt \quad\text{on }\mathbb{R}^{N},
\end{equation}
for $F$ satisfying appropriate assumptions detailed below, $\xi$ being a continuous function and  initial condition $u_0 \in BUC(\R^N)$. \revB{More precisely, under these assumptions we show that, for each $t\ge0$,
\begin{equation}\label{eq:intro-main_bd}
  \|D^2 u(t,\cdot)\|_{L^\infty} \leq \frac{1}{L^+(t)\wedge L^-(t)},
\end{equation}
where $L^\pm$ is the maximal continuous solution on $[0,\infty)$ to
\begin{equation}\begin{aligned}
dL^\pm(t) &= V_F(L^\pm(t)) dt \pm d\xi(t)  \mbox{ on } \{t\ge0 : L^\pm(t)>0\}, \;\; L^\pm \geq 0, \\
L^\pm(0) &= \frac{1}{\|D^2u_0\|_{L^\infty}} 
\label{eq:intro-reflected}
\end{aligned}
\end{equation}
and $V_F: \R_+ \to \R$ is a mapping depending only on $F$ (see Corollary \ref{cor:two-sided_bounds} below for the details).

While one-sided (i.e.\ semiconcavity or semiconvexity) bounds for the second derivative are typical for solutions of deterministic Hamilton-Jacobi-Bellman equations (cf. \cite{CS04,FS06}), two-sided (i.e.\ $C^{1,1}$) bounds in general do not hold for degenerate parabolic equations\footnote{See however the one-dimensional example in \cite{J04}.}. This is reflected by either $L^+$ or $L^-$ in \eqref{eq:intro-main_bd}, \eqref{eq:intro-reflected} with $\xi \equiv 0$ attaining zero value in finite time and then staying zero for all time. In contrast, we show that in the case of \eqref{eq:spde} such two-sided bounds may be obtained, due to the "stochastic" (or "rough") nature of the signal $\xi$. In particular, the inclusion of the random perturbation in \eqref{eq:spde} and consequently in \eqref{eq:intro-reflected} can cause both solutions $L^\pm$ to become strictly positive even after previously attaining zero value, thus implying a two sided bound on the second derivative of $u$ via \eqref{eq:intro-main_bd}. In this sense, we observe a regularization by noise effect.

We next give a series of applications illustrating this effect (cf.\ Section \ref{sec:examples} below for the details).

\begin{theorem} 
  Consider the stochastic $p$-Laplace equation\footnote{Equations of this form arise as (simplified) models of fluctuating hydrodynamics of the zero range process about its hydrodynamic limit (cf.\ \cite{DirrStamatakisZimmer} and \eqref{eq_Dirr_2} below).}
  \begin{equation*} 
      d u + \frac{\sigma}{2}|\partial_x u|^2 \circ d\beta(t)=\revP{\frac{1}{m}} \partial_x(|\partial_x u|^{m-1}\partial_x u)\;dt \quad \text{on }\R,
   \end{equation*}
    with $m\ge 3$, $\sigma>0$, $\beta$ a Brownian motion and initial condition $u_0\in (BUC \cap W^{1,\infty})(\R)$ and set $R:=\left\|\partial_x u_0\right\|_{L^\infty}$. Then, for all \revP{$\sigma^2 > 2(m-1)(m-2) R^{m-3}$} and all $t>0$,
      $$\|\partial_{xx} u(t)\|_{L^\infty} < \infty \quad\mathbb{P}\text{-a.s.}.$$
\end{theorem}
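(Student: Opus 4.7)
The plan is to apply Corollary \ref{cor:two-sided_bounds} to this equation with the identifications $\xi_t = \sigma \beta_t$ and $F(p,X) = |p|^{m-1}X$, using the algebraic identity $\tfrac{1}{m}\partial_x(|\partial_x u|^{m-1}\partial_x u) = |\partial_x u|^{m-1}\partial_{xx}u$. Since $u_0$ is only assumed to lie in $BUC \cap W^{1,\infty}$, we must take $L^\pm(0) = 0$, so the bound \eqref{eq:intro-main_bd} reduces the theorem to showing that the reflected SDEs $L^\pm$ of \eqref{eq:intro-reflected} satisfy $L^\pm(t)>0$, $\P$-a.s., for each fixed $t>0$. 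A first, essential input is the propagation of the Lipschitz bound $\|\partial_x u(t,\cdot)\|_{L^\infty} \le R$ (which follows from the maximum principle for the quasilinear evolution, the HJ nonlinearity transporting $\partial_x u$ along characteristics in the Stratonovich sense), allowing the factor $|\partial_x u|^{m-3}$ appearing below to be majorised by the constant $R^{m-3}$.

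To identify $V_F$, I would differentiate the equation twice in $x$ and evaluate at a spatial extremum $x_*$ of $u_{xx}(t,\cdot)$, where $u_{xxx}(t,x_*) = 0$, yielding
\[
\partial_t u_{xx}|_{x_*} + u_{xx}^2\,\dot\xi_t \;=\; (m-1)(m-2)|u_x|^{m-3}u_{xx}^3 + |u_x|^{m-1}u_{xxxx}|_{x_*}.
\]
At a maximum $u_{xxxx}\leq 0$, at a minimum $u_{xxxx}\geq 0$; applying the Stratonovich chain rule to $L^+ := 1/\max u_{xx}$ and $L^- := -1/\min u_{xx}$ and using $|u_x|\leq R$ gives the comparisons
\[
dL^\pm \;\geq\; -\frac{C}{L^\pm}\,dt \pm \sigma\,d\beta_t,\qquad C := (m-1)(m-2)R^{m-3}.
\]
Matching with the formulation of Corollary \ref{cor:two-sided_bounds} then identifies $V_F(L) = -C/L$, and the problem is reduced to showing that the reflected SDE $dZ = -(C/Z)\,dt + \sigma\,d\beta_t$, $Z(0) = 0$, satisfies $\P(Z(t) > 0) = 1$ for every fixed $t>0$.

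The last step is a Bessel-type computation. Setting $Y := Z^2$, Itô's formula gives
\[
dY \;=\; (\sigma^2 - 2C)\,dt + 2\sigma\sqrt{Y}\,d\beta_t,
\]
so that $Y/\sigma^2$ is a squared Bessel process of dimension $\delta = 1 - 2C/\sigma^2$. The hypothesis $\sigma^2 > 2(m-1)(m-2)R^{m-3}$ is precisely $\delta > 0$; by the classical Feller classification, $0$ is then a non-sticky reflecting boundary, so the set $\{t > 0 : Y(t) = 0\}$ has zero Lebesgue measure almost surely, and in particular $\P(Z(t) = 0) = 0$ for every fixed $t > 0$. Combined with the preceding two paragraphs, this delivers the claimed bound $\|\partial_{xx}u(t)\|_{L^\infty} < \infty$ $\P$-a.s.

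The main obstacle I anticipate is the rigorous derivation of the formula $V_F(L) = -C/L$ from the formal max-principle calculation above: in particular, justifying the computation at points where $\partial_x u$ vanishes (so that $F$ is not smooth when $m$ is small), working with viscosity solutions rather than classical extrema, and carrying out the Stratonovich-to-Itô bookkeeping from the equation for $u$ to the SDE for $L^\pm$ in a way that genuinely matches the $V_F$ supplied by Corollary \ref{cor:two-sided_bounds}. Once that is in hand, the Bessel boundary analysis is classical and makes the sharp threshold $\sigma^2 > 2(m-1)(m-2)R^{m-3}$ transparent.
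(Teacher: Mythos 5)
Your high-level strategy matches the paper's: apply Corollary~\ref{cor:two-sided_bounds} with $\ell_0^\pm = 0$ and $\xi = \sigma\beta$, propagate the Lipschitz bound $\|\partial_x u(t)\|_\infty \le R$ so as to localize $a(p)=|p|^{m-1}$ to $B_R$, identify $V_F(\ell) = -C/\ell$ with $C=(m-1)(m-2)R^{m-3}$, and then show that the reflected SDE is $\P$-a.s.\ strictly positive at every fixed $t>0$. Where you diverge is in the last step: the paper's Proposition~\ref{prop:reflection_for_BM} proceeds via the Feller scale/speed integrals $I^\pm$; for $V(x) = -C/x$ one checks $I^+<\infty$ always while $I^-<\infty$ iff $2C/\sigma^2<1$, so $\sigma^2>2C$ is exactly the regular-boundary case, whence $\P(\hat X(t)=0)=0$. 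Your route via $Y=Z^2$ and the BESQ$(\delta)$ identification with $\delta = 1-2C/\sigma^2$ is a legitimate and arguably more transparent alternative giving the same threshold. Two small cautions there: one must note that the reflection term $dR$ drops out of It\^o's formula (it does, since $Z\,dR=0$); and the assertion $\P(Y(t)=0)=0$ for each fixed $t>0$ is a standard property of BESQ$(\delta)$ with $0<\delta<2$ (instantaneous reflection), not merely a Fubini consequence of the zero-Lebesgue-measure statement.

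The step you flag as the main obstacle is a genuine gap, and the max-principle sketch you give cannot close it. It also subtly misidentifies what must be shown. Assumption~\ref{asn:odeF} concerns only the deterministic semigroup $S_F$ for $\partial_t v = |v_x|^{m-1}v_{xx}$: the Stratonovich noise is handled separately by the Trotter--Kato splitting inside Theorem~\ref{thm:main}, so the $\dot\xi_t$ term should not appear in the derivation of $V_F$ at all. Your computation also presupposes $C^{1,4}$ smoothness and the attainment of a spatial extremum of $u_{xx}$, neither of which is available for viscosity solutions. The paper instead invokes Proposition~\ref{ex:LMcond1}(2), which rests on Theorem~\ref{thm:semi-conv}: one verifies the Lions--Musiela convexity condition \eqref{eq:LM2cond} for the quasilinear operator $\operatorname{Tr}(a(x,p)A)$ via the convex-envelope machinery of Alvarez--Lasry--Lions (Proposition~\ref{thm:LM}), using that $p\mapsto\sqrt{a(p)}=|p|^{(m-1)/2}$ is convex for $m\ge 3$, and then passes from classical solutions to viscosity solutions by vanishing-viscosity approximation (Corollary~\ref{lem:StabSemiconv}). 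With $N=1$ and $a$ independent of $x$ this gives $V_F(\ell) = -\|a_{pp}\|_{L^\infty(B_R)}/\ell = -C/\ell$, the formula you anticipated. This convex-envelope argument is the substantive content your formal computation is standing in for, and it is where the work actually lies.
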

In contrast, for $\sigma = 0$ and $t>0$ large enough one typically has $\|\partial_{xx} u(t)\|_{L^\infty} = \infty$.

This dependence of a regularizing effect of noise on the strength of the noise $\s$ seems to be observed here for the first time\footnote{In contrast, critical noise intensities regarding synchronization by noise have been observed before (cf.\ e.g.\ \cite{ACW83,FGS14,V18}).}. We prove the critical noise intensity to be optimal: In the case $m=3$ for $\sigma^2 \le 4$ we show (cf.\ Corollary \ref{cor:plp-optimal} below) that $\mathbb{P}\text{-a.s.}$,
 $$\|\partial_{xx} u(t)\|_{L^\infty} = \infty\quad \text{ for all $t>0$ large enough.}$$ 

In fact, for suitable initial conditions (cf.\ Section \ref{sec:optimality} below) we obtain the sharp equality
\begin{equation}\label{eq:intro_sharp}
\|\partial_{xx} u(t)\|_{L^\infty}=\frac{1}{L^+(t) \wedge L^-(t)},
\end{equation}
where $L^\pm$ are the solutions to the reflected (at $0^+$) SDE with dynamics on $(0,\infty)$ given by 
\begin{align*}
&dL^\pm  =-\frac{2}{L^\pm(t)}dt\pm \sigma d\beta_{t},\;\; L^\pm(0)  =\frac{1}{\|(\partial_{xx} u_{0})_\pm\|_{L^\infty}}.
\end{align*}
This implies the optimality of \eqref{eq:intro-main_bd}.


\begin{theorem}\label{thm:into-hyp}
  Consider hyperbolic SPDE of the form
  \begin{equation}
  du+\frac{1}{2}|Du|^{2}\circ d\beta_{t}^{H}=F(Du)\, dt\quad\text{on }\mathbb{R}^{N},\label{eq:hyper_SPDE}
  \end{equation}
  where $\beta^{H}$ is a fractional Brownian motion with Hurst parameter $H\in(0,1)$, $F\in C^2(\R^N)$, and $u(0,\cdot)=u_0\in (BUC \cap W^{1,\infty})(\R^N)$. Then, for all $t>0$,
   \[
  \mathbb{P}(\|D^{2}u(t,\cdot)\|_{L^{\infty}}<\infty)=1,
  \]
  for $u$ being a solution to \eqref{eq:hyper_SPDE}.
\end{theorem}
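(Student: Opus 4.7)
The plan is to apply Corollary~\ref{cor:two-sided_bounds} with signal $\xi=\beta^{H}$, which reduces the theorem to showing that the maximal continuous reflected solutions $L^{\pm}$ of \eqref{eq:intro-reflected} satisfy $L^{\pm}(t)>0$ for every $t>0$ almost surely. Since $u_{0}$ is assumed only $W^{1,\infty}$, one may have $\|D^{2}u_{0}\|_{L^{\infty}}=\infty$, so that $L^{\pm}(0)=0$ and the noise must be the mechanism that drives $L^{\pm}$ off the boundary.

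The first key observation is that, because $F=F(Du)$ and $F\in C^{2}(\R^{N})$, one has the pathwise \emph{a priori} bound $\|Du(t,\cdot)\|_{L^{\infty}}\le R:=\|Du_{0}\|_{L^{\infty}}$ for all $t\ge 0$; this follows from translation invariance of \eqref{eq:hyper_SPDE} combined with the $L^{\infty}$ contractivity of the rough-path solution flow (shift by $h$ in $x$, apply $L^{\infty}$ contraction, and let $h\to 0$). Consequently only the restriction of $F$ to $\overline{B_{R}}$ is relevant, and the coefficient $V_{F}$ appearing in \eqref{eq:intro-reflected}, which in this case arises from differentiating $F(Du)$ twice in $x$, is bounded below on its effective range by some finite constant $-K=-K(R,F)\in\R$. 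This is the crucial qualitative difference with the $p$-Laplace example, where $V_{F}(L)\to-\infty$ as $L\to 0^{+}$ forces a condition on the noise intensity; here $V_{F}$ is bounded from below irrespective of the noise.

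Combining this lower bound on $V_{F}$ with the Skorokhod representation of a reflected process at $0$ then yields the pathwise estimate
\[
L^{\pm}(t)\;\ge\;\sup_{s\in[0,t]}\Bigl\{\pm\bigl(\beta^{H}(t)-\beta^{H}(s)\bigr)-K(t-s)\Bigr\},\qquad t\ge 0.
\]
It therefore suffices to show that, for every fixed $t>0$ and each choice of sign, the right-hand side is strictly positive almost surely. Restricting the supremum to $s\in[t-\epsilon,t]$ for small $\epsilon>0$, this reduces to
\[
\sup_{s\in[t-\epsilon,t]}\pm\bigl(\beta^{H}(t)-\beta^{H}(s)\bigr)>K\epsilon,
\]
which is plausible because the local oscillation of $\beta^{H}$ near $t$ is of order $\epsilon^{H}$, and $\epsilon^{H}\gg K\epsilon$ as $\epsilon\to 0$ since $H<1$.

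I expect the main technical point to be this last step, namely upgrading the \emph{in-probability} two-sided oscillation bound into an \emph{almost sure} statement at each fixed $t$. In the Brownian case $H=\tfrac{1}{2}$ this is immediate from Blumenthal's zero-one law applied to the time-reversed increments at $t$; for general $H\in(0,1)$ it can be obtained from the local law of the iterated logarithm for fractional Brownian motion, which guarantees non-degenerate local oscillations of $\beta^{H}$ of both signs at every fixed point, almost surely, and hence positivity of $L^{\pm}(t)$ for both signs simultaneously outside a null set (a null set independent of $t$ via a standard countable-dense-then-continuity argument).
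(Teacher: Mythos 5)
Your proposal is correct and follows essentially the same route as the paper: reduce via Corollary~\ref{cor:two-sided_bounds} (or, equivalently, the first-order specialization with $V_F\equiv -\|F_{pp}\|_{L^\infty(B_R)}$ coming from Proposition~\ref{ex:LMcond1}(1) together with the gradient bound $\|Du(t)\|_\infty\le\|Du_0\|_\infty$), apply the Skorokhod reflection formula
\[
L^+(t)=\sup_{s\in[0,t]}\bigl\{(\xi(t)-\xi(s))-K(t-s)\bigr\},\qquad
L^-(t)=\sup_{s\in[0,t]}\bigl\{(\xi(s)-\xi(t))-K(t-s)\bigr\},
\]
and conclude positivity from the local non-differentiability of fBm, i.e.\ that for each fixed $t>0$ one has $\P$-a.s.\ $\limsup_{s\uparrow t}\frac{\xi(t)-\xi(s)}{t-s}=\limsup_{s\uparrow t}\frac{\xi(s)-\xi(t)}{t-s}=+\infty$, which follows from the local law of the iterated logarithm since $\epsilon^{H-1}\sqrt{\log\log(1/\epsilon)}\to\infty$ for $H<1$. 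This is precisely the paper's argument.

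One side remark in your write-up is incorrect, although it does not affect the theorem as stated. You suggest that the exceptional null set can be made independent of $t$ ``via a standard countable-dense-then-continuity argument.'' That does not work: continuity of $t\mapsto L^\pm(t)$ does not allow you to pass from $L^\pm(t_n)>0$ on a dense set of times $t_n$ to $L^\pm(t)>0$ for all $t$, since a continuous nonnegative path can vanish at isolated points even if it is positive on a dense set. In fact, for $\xi=\beta^H$ the reflected process $L^\pm$ does hit zero at a random (measure-zero) set of times, so a uniform-in-$t$ statement is genuinely false here. Fortunately the theorem only asserts $\P(\|D^2u(t,\cdot)\|_{L^\infty}<\infty)=1$ for each fixed $t$, so the null set is permitted to depend on $t$ and your main argument stands.
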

%
In contrast, the solutions to the deterministic counterpart
\[
\partial_{t}w+\frac{1}{2}|Dw|^{2}=F(Dw) \quad \mbox{or}\quad \partial_{t}w=F(Dw)\quad\text{on }\mathbb{R}^{N}
\]typically develop singularities in terms of shocks of the derivative, that is, $Dw$ will become discontinuous for large times, even if $w_{0}$ is smooth. }

\revB{
The following particularly simple example may help to illustrate the regularizing effect of noise observed in this work \revP{(note that the bound does not depend on the regularity of the initial condition)}.

\begin{example} 
  Consider hyperbolic SPDE of the form
  \begin{equation}
  du+\frac{1}{2}|Du|^{2}\circ d\xi_{t} = 0\,\quad\text{on }\mathbb{R}^{N}, \label{eq:stoch_burgers}
  \end{equation}
  with $\xi \in C(\R_+)$ and $u(0,\cdot)=u_0\in BUC(\R^N)$.  Then
   \[
    \|D^{2}u(t,\cdot)\|_{L^{\infty}} \le \frac{1}{L^+(t)\wedge L^-(t)},
  \]
  where 
  $ L^+(t)= \xi_t - \min_{s\in [0,t]} \xi_s$, $L^-(t)=\max_{s\in [0,t]}\xi_s - \xi_t$.
\end{example}
}
\gray{
In contrast, in the deterministic case $\xi(t)=t$, the solution $u$ to \eqref{eq:stoch_burgers} typically develops shocks in finite time, that is, the second derivative becomes infinite and stays infinite at all later times.
}

\gray{
Our results may also be applied to some cases where, unlike in the previous examples, the deterministic part of the equation has a regularizing effect. For example, consider the equation
\[
\partial_{t}w=\frac{1}{2}(1-(\partial_{x} w)^2)\partial_{xx}w,\quad\text{on }\mathbb{R} , \;\;w(0,\cdot)=w^0
\]
with initial condition $w^0$ such that $\left\|\partial_x w^0\right\|_{L^\infty} < 1$. Since this is preserved by the equation, that is $\left\|\partial_x w(t,\cdot)\right\|_{L^\infty} < 1$ for all $t\ge 0$, the deterministic part is uniformly elliptic. In particular, the solutions are smooth at positive times. Our result yields that this is still true for the solution $u$ to
\[
du+\frac{1}{2}(\partial_{x}u)^{2}\circ d\xi_{t}=\frac{1}{2}\left( 1 - (\partial_x u)^2\right)  (\partial_{xx} u)  dt,\quad\text{on }\mathbb{R}, \;\;\; u(0,\cdot)=w^0
\]
 if the intensity is the noise is small enough. More precisely, if $\xi \in C^{\alpha}, \alpha > \frac{1}{2}$ or $\xi= \sigma B$ with $B$ a Brownian motion and $\sigma<1$, then (almost surely in the latter case)
$$\forall t >0, \;\;\;\left\|\partial_{xx} u(t,\cdot)\right\|_{L^\infty} < \infty.$$
Again this follows from properties of SDE, namely that the solutions to
$$dL^{\pm} = \frac{1}{L^{\pm}} dt \pm d\beta(t)
$$
do not hit $0$ at positive times.}

Finally, let us mention that our regularity results imply some estimates for large time behavior. For instance, if $u$ is a solution to the stochastic Hamilton-Jacobi equation
$$du+\frac{1}{2}(\partial_{x}u)^{2}\circ d\beta_{t}=0,\quad u(0,\cdot)=u^0(\cdot),$$ 
then, for all $t \geq 0$, (cf.\ Proposition \ref{prop:lt} below)
\begin{equation*} 
\left\|Du(t,\cdot)\right\|_{L^\infty} \leq \sqrt{\frac{2 \left\|u^0\right\|_{L^\infty}}{\max_{0\leq s \leq t}\beta(s) - \inf_{0\leq s \leq t} \beta(s)}}.
\end{equation*}
Note that when $\beta$ is a Brownian motion, we get a rate of decay in $t^{-1/4}$ which is the same rate as obtained in \cite{GS14-2}.

The proof of the main abstract result is based on the regularizing effects of the semi-groups $S_H$ and $S_{-H}$ associated to the Hamiltonians $H:=p\mapsto \frac{1}{2}p^2$ and $-H$. It is well-known that $S_H$ and $S_{-H}$ allow to obtain one-sided bounds (of the opposite sign) on the second derivative (cf e.g. \cite{L82}), and the fact that one can combine these two bounds to obtain $C^{1,1}$ bounds goes back to Lasry and Lions \cite{LL86}. Our main theorem is in a sense a generalization of their result.  

\gray{
Before stating our theorem in detail let us first consider some concrete examples (cf.\ Section \ref{sec:examples} below for details).}

\subsection{Literature}

The questions of regularizing effects and well-posedness by noise for (stochastic) partial differential equations have attracted much interest in recent years. The principle idea is that the inclusion of stochastic perturbations may lead to more regular solutions and in some cases even to the uniqueness of solutions. 
Historically, possible regularizing effects of additive noise have been investigated, e.g. for (stochastic) reaction diffusion equations
\[
dv=\Delta v\, dt+f(v)\, dt+dW_{t}
\]
in \cite{GP93} and for Navier-Stokes equations in \cite{FR02,FR08}. In \cite{FGP10,FF13,BFGM14}, well-posedness and regularization by linear multiplicative noise for transport equations, that is, for 
\[
dv=b(x)\nabla_{x}v\, dt+\nabla v\circ d\beta_{t},
\]
have been obtained. \revB{Regularization by noise phenomena have been observed in several classes of nonlinear PDE, such as Navier-Stokes equations \cite{FR02,FR08}, nonlinear Schrödinger equations \cite{DT11}, alpha-models of turbulence \cite{BBF14}, dyadic models for turbulence \cite{F11}, nonlinear heat equations \cite{GP93,DPFRV16}, geometric PDE \cite{SY04,DLN01}, Vlasov-Poisson equations \cite{DFV14} and point vortex dynamics in 2D Euler equations \cite{FGP11}, among many more. We refer to \cite{F11,G17} for more details on the literature.}

Recently, regularizing effects of \textit{non-linear} stochastic perturbations in the setting of (stochastic) scalar conservation laws have been discovered in \cite{GS14-2}. In particular, in \cite{GS14-2} it has been shown that quasi-solutions to 
\begin{equation}
dv+\frac{1}{2}\partial_{x}v^{2}\circ d\beta_{t}=0\quad\text{on }\mathbb{T}\label{eq:stoch_Burgers}
\end{equation}
where $\mathbb{T}$ is the one-dimensional torus
, enjoy fractional Sobolev regularity of the order 
\begin{equation}
v\in L^1([0,T];W^{\alpha,1}(\mathbb{T}))\quad\text{for all }\alpha<\frac{1}{2},\,\mathbb{P}\text{-a.s.}\label{eq:stoch_est}
\end{equation}
This is in contrast to the deterministic case, in which examples of quasi-solutions to
\[
\partial_{t}v+\frac{1}{2}\partial_{x}v^{2}=0\quad\text{on }\mathbb{T}
\]
have been given in \cite{DLW03} such that, for all $\alpha>\frac{1}{3}$,
\[
v \not\in L^1([0,T];W^{\alpha,1}(\mathbb{T})).
\]
In this sense, the stochastic perturbation introduced in \eqref{eq:stoch_Burgers} has a regularizing effect. In \cite{GS14-2}, the question of optimality of the estimate \eqref{eq:stoch_est} remained open.

Subsequently, the results and techniques developed in \cite{GS14-2} have been (partially) extended in \cite{GS16} to a class of parabolic-hyperbolic SPDE, as a particular example including the SPDE
\begin{equation}
dv+ \frac{1}{2}\partial_{x}v^{2}\circ d\beta_{t}=\frac{1}{12}\partial_{xx}v^{3}\, dt\quad\text{on }\mathbb{T}.\label{eq:stoch_PME}
\end{equation}
\blue{
Equations of the type \eqref{eq:stoch_PME} arise as (simplified) models of fluctuating hydrodynamics of the zero range process about its hydrodynamic limit, as informally shown by Dirr, Stamatakis, and Zimmer in \cite{DirrStamatakisZimmer}. More precisely, in \cite{DirrStamatakisZimmer} the fluctuations were shown to satisfy a stochastic nonlinear diffusion equation of the type
\begin{equation}\label{eq_Dirr_2}d v=\Delta\left(\Phi(v)\right)dt+\nabla\cdot\left(\sqrt{\epsilon\Phi(v)}\circ dW\right),\end{equation}
where $dW$ is space-time white noise. In the porous medium case $\Phi(\rho)=\rho|\rho|^{m-1}$, choosing $m=4$ and replacing $dW$ by spatially homogeneous noise, this becomes (up to constants)
\begin{equation*} 
 d v^\epsilon=\partial_{xx} (v|v|^3)+\frac{1}{2} \partial_x v^2\circ d\b_t.
\end{equation*}
}
In \cite{GS16}, the regularity of solutions to \eqref{eq:stoch_PME} was analyzed. More precisely, it was shown that 
\begin{equation*}
v \in L^1([0,T];W^{\alpha,1}(\mathbb{T}))\quad\text{for all }\alpha<\frac{2}{5},\,\mathbb{P}\text{-a.s}. 
\end{equation*}
However, neither optimality of these results nor regularization by noise could be observed in this case. That is, the regularity estimates for solutions to \eqref{eq:stoch_PME} proven in \cite{GS16} did not exceed the known regularity for the solutions to the non-perturbed cases
\begin{equation*}
\partial_{t}v+\frac{1}{2}\partial_{x}v^{2}=\frac{1}{12}\partial_{xx}v^{3}\quad\text{or}\quad\partial_{t}v=\frac{1}{12}\partial_{xx}v^{3}\text{ on }\mathbb{T}. 
\end{equation*}

In \cite{GS14-2,GS16} the estimation of the regularity of solutions to \eqref{eq:stoch_Burgers}, \eqref{eq:stoch_PME} relied on properties of the law of Brownian motion. The question of the {\it path-by-path} properties of $\b$ leading to regularization by noise could thus not be answered (cf.\ \cite{CG16} for related questions in the case of linear transport equations). 

\revB{If $u$ is the unique viscosity solution to the SPDE
\[
du+\frac{\sigma}{2}(\partial_{x}u)^{2}\circ d\beta_{t}=\frac{1}{12}\partial_{x}(\partial_{x}u)^{3} dt,\quad\text{on }\mathbb{R},
\]
then, informally, $v=\partial_{x}u$ is a solution to \eqref{eq:stoch_PME}. Hence, in the present work both the question of optimal regularity estimates for \eqref{eq:stoch_PME}, as well as an analysis of path-by-path properties of the driving noise leading to regularizing effects are addressed.}

\subsection{Organization of the paper} In Section \ref{sec:main_result} we give the precise statement of the assumptions and the main abstract theorem. Subsequently, we provide a series of applications of the main abstract result to specific SPDE in Section \ref{sec:examples}. The proof of the main abstract result is given in Section \ref{sec:proof}, while sufficient conditions for its assumptions are presented in Section \ref{sec:semiconv}. The proof of optimality is given in Section \ref{sec:optimality}. In the Appendix \ref{sec:visc_soln} we recall the employed well-posedness and stability results for stochastic viscosity solutions. 

\subsection{Notation}

We let $\R_+:=[0,\infty)$ and $S^N$ be the set of all symmetric $N\times N$ matrices. We further define $C_0^k([0, T ]; \R) := \{\xi \in C^k ([0, T ]; \R) : \xi(0) = 0\}$, $\Lip_{loc}(\R^N)$ to be the space of all locally Lipschitz continuous functions on $\R^N$ and $\Lip_b(\R_+)$ to be the space of all bounded Lipschitz continuous functions on $\R_+$. For a c\`adl\`ag path $\xi$ we set $\xi_{s,t}:=\xi_{t}-\xi_{s-}$.

\revP{
Given a continuous function $F$ we let $\left(S_F(s,t)\right)_{s \leq t}$ be the (two-parameter) semigroup, in the sense of viscosity solutions and in case it exists, for the PDE
\begin{equation} \label{eq:Fnot}
 \partial_t v = F(t,x,v,Dv,D^2v),
\end{equation}
namely if $v$ is a solution to \eqref{eq:Fnot} with $v(s,\cdot) =v_s$ then $S_F(s,t;v_s) = v(t,\cdot)$.
Similarly for a given $H$ we let $\left(S_H(t)\right)_{t \geq 0} =\left(S_H(0,t)\right)_{t \geq 0}$ be the (one-parameter) semigroup associated to the equation
  $$\partial_t v + H(Dv)= 0.$$
For a locally Lipschitz continuous function $V : (0,\infty) \to \R$  we define $\vp^V(t):\R_+\to \bar{\R}_+$, as the solution flow to the ODE $\dot{\ell}(t) = V(\ell)$ stopped when reaching the boundaries $0$ or $+\infty$ (i.e. $t\mapsto \vp^V(t;\ell)$ is the solution to this ODE with initial condition $\vp^V(0;\ell)=\ell$).}

For notational convenience, we set $H(p):=\frac{1}{2}|p|^2$ and  $S_H(-\d):=S_{-H}(\d)$ for $\d\ge 0$.

A modulus of continuity is a nondecreasing, subadditive function $\omega: [0, \infty) \to [0, \infty)$ such that $\lim_{r\to 0} \omega (r) = \omega(0) = 0.$ We define $UC(\R^N)$ to be the space of all uniformly continuous functions, that is, $u\in UC(\R^N)$ if $|u (x) - u (y)| \le \omega(|x-y|)$ for some modulus of continuity $\omega$. If, in addition, $u$ is bounded, we say $u\in BUC(\R^N)$. Furthermore, $USC(\R^N)$ (resp.\ $LSC(\R^N)$) denotes the
set of all upper- (resp. lower) semicontinuous functions in $\R^N$, and $BUSC(\R^N)$ (resp. $BLSC(\R^N)$) is the set of all bounded functions in $USC(\R^N)$ (resp. $LSC(\R^N)$).

\revP{We denote by $\|u\|_\infty$ the usual supremum norm of a function $u:\R^N \to \R$. For $E \subset \R^N$ we let $\|u\|_{L^\infty(E)} = \sup_{x \in E} \left|u(x)\right|.$ We further let $\|Du\|_{\infty}$ be the Lipschitz constant of $u$.}

We say that a function $u : \R^N \to \R$ is semiconvex (resp. semiconcave) of order $C$ if $x \mapsto u(x) + \frac{1}{2}C |x|^2$ is convex (resp. $x \mapsto u(x) - \frac{1}{2}C |x|^2$ is concave). \revP{We let $\|D^2 u \|_{\infty}$ be the smallest $C$ such that $u$ is both semiconcave and semiconvex of order $C$.}

For $a, b \in \R$ we set $a \wedge b := \min(a, b)$, $a \vee b := \max(a, b)$, $a + := \max(a, 0)$ and $a- := \max(-a, 0)$. For $m\ge 1$, $u\in\R$ we define $u^{[m]}:=|u|^{m-1}u$. We let $K,\tilde K$ be generic constants that may change value from line to line.

\begin{ackno*}
The work of PG was supported by the ANR, via the project ANR-16-CE40- 0020-01. The work of BG was supported by the DFG through CRC 1283.
\end{ackno*}

\section{Main abstract result}\label{sec:main_result}
%
%
%
%
%

We consider rough PDE of the form 
\begin{equation}\label{eq:main-SPDE}\begin{aligned}
   d u+  \frac{1}{2} |D u|^2 \circ d{\xi}(t)  &=F(t,x,u,Du,D^2u)dt\\
   u(0) &= u_0,
\end{aligned}\end{equation}
where $u_0 \in BUC(\R^N)$, $\xi$ is a continuous path and $F$ satisfies the typical assumptions from the theory of viscosity solutions, that is,
\begin{assumption}\label{asn:F}
\begin{enumerate}
  \item Degenerate ellipticity: For all $X,Y\in S^N$, $X\le Y$ and all $(t,x,r,p) \in [0,T] \times  \R^N \times \R \times \R^N $,
    $$F(t,x,r,p,X)\le F(t,x,r,p,Y).$$
  \item Lipschitz continuity in $r$: There exists an $L>0$ such that
    $$ |F(t,x,r,p,X) - F(t,x,s,p,X) |\le L|r-s|\quad\forall (t,x,s,r,p,X) \in [0,T] \times  \R^N \times \R\times \R \times \R^N \times S^N. $$
  \item Boundedness in $(t,x)$:
    $$ \sup_{[0,T]\times\R^N} |F(\cdot,\cdot,0,0,0)| <\infty. $$
  \item Uniform continuity in $(t,x)$: For any $R>0$, 
    $$ F \text{ is uniformly continuous on } [0,T] \times  \R^N \times [-R,R] \times  B_R \times B_R. $$
  \item Joint continuity in $(X,p,x)$: For each $R>0$ there exists a modulus of continuity $\o_{F,R}$ such that, for all $\a\geq 1$ and uniformly in $t\in [0,T]$, $x,y \in \R^N$, $r\in [-R,R]$,
    $$ F(t,x,r,\a (x-y),X)- F(t,y,r,\a (x-y),Y) \le \o_{F,R}(\a |x-y|^2+|x-y|),$$
  for all $X,Y\in S^N$ such that
    $$ -3\a\left( \begin{array}{ll}
    I & 0 \\
    0 & I
    \end{array} \right) \le \left( \begin{array}{ll}
        X & 0 \\
        0 & -Y
        \end{array} \right)
    \le 3\a \left( \begin{array}{ll}
        I & -I \\
        -I & I
        \end{array} \right).$$    
\end{enumerate}
\end{assumption}
We refer to the Appendix \ref{sec:visc_soln} for an according well-posedness result for \eqref{eq:main-SPDE}. 

We will make the following assumption on $F$ : 
\begin{assumption} \label{asn:odeF}
There exists $V_F : (0,\infty) \to \R$, locally Lipschitz and bounded from above on $[1,\infty)$ such that 
%
for all $g \in BUC(\R^n)$, $t\geq 0$,  one has for all $\ell \geq 0$,
$$D^2 g \leq \ell^{-1} Id \;\;\Rightarrow \;\; D^2 (S_F(t, g)) \leq \frac{Id}{\vp^{V_F}(t;\ell)},$$
the inequalities being understood in the sense of distributions.
\end{assumption}
The above assumption yields a control on the rate of loss of semiconcavity for $S_F$. Note that $\vp^{V_F}$ may take the value $0$ and thus no preservation of semiconcavity is assumed. 

\begin{theorem}\label{thm:main} 
Let $u_0 \in BUC(\R^N)$, $\xi \in C(\R_+)$, suppose that Assumptions \ref{asn:F}, \ref{asn:odeF} are satisfied and let $u$ be the \revP{unique viscosity solution (as defined in Theorem \ref{thm:app_wp})} to
 \begin{equation*}\label{eq:main_PDE} 
\left\{\begin{array}{l}	
d u + \frac{1}{2} |D u|^2 \circ d\xi(t) = F(t,x,u,Du,D^2u)dt, \\u(0,\cdot) = u_0. \end{array}\right.
\end{equation*} 
Suppose that $D^2u_0 \leq \frac{Id}{\ell_0}$ for some $\ell_0 \in [0,\infty)$, in the sense of distributions. Then, for each $t \geq 0$,
\begin{equation}\label{eq:main_bd}
  D^2 u(t,\cdot) \leq \frac{Id}{L(t)},
\end{equation}
in the sense of distributions, where $L$ is the maximal continuous solution on $[0,\infty)$ to
\begin{equation}\begin{aligned}
dL(t) &= V_F(L(t)) dt + d\xi(t)  \mbox{ on } \{t\ge0 : L(t)>0\}, \;\; L \geq 0, \\
L(0) &= \ell_0.
\label{eq:reflected}
\end{aligned}
\end{equation}
\end{theorem}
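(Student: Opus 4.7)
The plan is to prove the theorem by a Trotter--Lie splitting argument combined with a pathwise analysis of how each building block of the splitting acts on one-sided quadratic upper bounds of the Hessian. Choose a sequence of partitions $\pi^n = \{0 = t_0^n < t_1^n < \cdots\}$ of $[0,\infty)$ with $|\pi^n| \to 0$, and let $\xi^n$ be the piecewise-linear interpolant of $\xi$ along $\pi^n$. Define the splitting approximation
\[
  u^{\pi^n}(t) = S_F(t_k^n, t) \, S_H\bigl(\xi_{t_k^n,t}^n\bigr)\, u^{\pi^n}(t_k^n), \qquad t \in (t_k^n, t_{k+1}^n].
\]
By the well-posedness and stability theory for stochastic viscosity solutions recalled in Theorem \ref{thm:app_wp}, $u^{\pi^n}(t) \to u(t)$ locally uniformly on $\R^N$ as $n \to \infty$.

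The key step is a one-step rule for each building block. For $S_H(\delta)$, $\delta \in \R$, the Hopf--Lax representation (an inf-convolution with $\frac{|\cdot|^2}{2\delta}$ when $\delta > 0$, a sup-convolution with $\frac{|\cdot|^2}{2|\delta|}$ when $\delta < 0$) gives by a direct computation
\[
  D^2 g \le \frac{Id}{\ell} \ \Longrightarrow \ D^2(S_H(\delta)g) \le \frac{Id}{\ell + \delta} \quad \text{whenever } \ell + \delta > 0,
\]
the bound being vacuous when $\ell + \delta \le 0$ (a sufficiently negative increment can destroy semiconcavity). Assumption \ref{asn:odeF} is the corresponding statement for $S_F(t_k^n,t_{k+1}^n)$, propagating the bound $\ell^{-1}Id$ to $\varphi^{V_F}(t_{k+1}^n - t_k^n;\ell)^{-1} Id$. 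Iterating the two rules along the splitting gives the discrete scheme
\[
  \ell_0^{\pi^n} = \ell_0, \qquad \ell_{k+1}^{\pi^n} = \varphi^{V_F}\!\bigl(t_{k+1}^n - t_k^n;\ (\ell_k^{\pi^n} + \xi_{t_k^n, t_{k+1}^n}^n)_+\bigr),
\]
and by induction $D^2 u^{\pi^n}(t_k^n) \le Id/\ell_k^{\pi^n}$ in the sense of distributions (a vacuous bound when $\ell_k^{\pi^n} = 0$).

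It remains to pass to the limit in both sides of this inequality. On the PDE side we use the uniform convergence $u^{\pi^n} \to u$ together with the standard fact that semiconcavity bounds are lower semi-continuous under uniform convergence: if $D^2 u^{\pi^n} \le (\ell^n)^{-1} Id$ with $\ell^n \to \ell$, then $D^2 u \le \ell^{-1} Id$. On the ODE side, the piecewise-constant interpolant of $\ell_k^{\pi^n}$ is recognised as an Euler-type approximation of the Skorohod-reflected equation \eqref{eq:reflected}; since $V_F$ is locally Lipschitz and bounded from above on $[1,\infty)$, the maximal continuous solution $L$ exists globally, and continuity of the Skorohod reflection map in $C(\R_+)$ (together with $\xi^n \to \xi$ uniformly on compacts) yields $\ell^{\pi^n}(t) \to L(t)$. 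The main obstacle is the correct treatment of the boundary $\{L = 0\}$: when the discrete scheme hits $0$ and later recovers via a sufficiently large positive excursion of $\xi$, one must check that the pre-limit bound degenerates to triviality precisely along the local-time set of the Skorohod reflection at the origin, which is exactly what selecting the \emph{maximal} continuous solution in \eqref{eq:reflected} encodes. This matching of the reflection mechanism with the loss of semiconcavity under $S_H(\delta)$ for strongly negative $\delta$ is the heart of the argument.
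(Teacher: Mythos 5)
Your overall strategy matches the paper's: a Trotter--Kato splitting of the SPDE into $S_F$ and $S_H$ blocks, the one-step Hessian-upper-bound rules coming respectively from Assumption \ref{asn:odeF} and from the Hopf--Lax inf/sup-convolution computation (Proposition \ref{prop:InfSup}), the resulting discrete Euler scheme for a reflected ODE, and passage to the limit. The lower semi-continuity of semiconcavity bounds under uniform convergence on the PDE side is also the right observation.

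However, the last paragraph of your proposal contains a genuine gap rather than just a deferred technicality. You invoke ``continuity of the Skorohod reflection map'' to get $\ell^{\pi^n}(t)\to L(t)$. This is established in the paper (Proposition \ref{prop:rsde} and Proposition \ref{prop:TKsde}) only when $V_F$ admits a Lipschitz extension to $[0,\infty)$, in which case the reflected equation \eqref{eq:rsde} with the usual local-time mechanism $(X,R)$ is well-posed and identified with the maximal element of $\mathcal S(V_F,\xi,\ell_0)$. But the hypotheses allow $V_F$ to be merely \emph{locally} Lipschitz on $(0,\infty)$ and singular at the boundary (e.g.\ $V_F(\ell)=-c/\ell$, the case of Theorem \ref{thm:plp}). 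There the ``Skorohod map'' picture fails: the set $\mathcal S(V_F,\xi,\ell_0)$ may contain several continuous solutions, there is no local-time decomposition, and the Euler scheme $\ell_k^{\pi^n}$ need not converge to the maximal one on a set of positive length of $\{L=0\}$; indeed the paper does not attempt to prove such a convergence. Instead, once $L$ hits $0$, the paper switches to a different argument: choose $s_\varepsilon$ with $L(s_\varepsilon)=\varepsilon$ and $L\geq\varepsilon$ on $[s_\varepsilon,t]$, restart the SPDE at time $s_\varepsilon$ from the regularised datum $S_H(-\varepsilon)u(s_\varepsilon,\cdot)$ (which by Proposition \ref{prop:InfSup} satisfies $D^2\leq\varepsilon\,Id$ unconditionally), run the Trotter--Kato argument on the shorter interval where $L\geq\varepsilon$ and a Lipschitz truncation of $V_F$ suffices, obtain $D^2u^\varepsilon(t,\cdot)\leq Id/L(t)$, and finally send $\varepsilon\to0$ using the continuity of the solution map in $\xi$ (in supremum norm), after observing that $u^\varepsilon$ is driven by $\xi^\varepsilon=\xi+\varepsilon\mathbf 1_{[s_\varepsilon,t]}$ (Proposition \ref{prop:ext_jumps}). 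This restart-and-shift step is what actually ``matches the reflection mechanism with the loss of semiconcavity,'' and your proposal neither supplies it nor an equivalent; asserting that the maximal-solution selection ``encodes'' the right limit is precisely what needs to be (and is not) proved for singular $V_F$.

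A secondary, smaller issue: you interpolate $\xi$ piecewise linearly and then write the one-step update as $S_F\circ S_H(\xi^n_{t_k^n,t})$, which mixes the two descriptions (a genuinely linear-in-time $\xi^n$ does not produce a split evolution). The paper avoids this by working with piecewise-\emph{constant} $\xi^n$, defining $S^\xi$ for c\`adl\`ag paths via Marcus-type jump completion (Proposition \ref{prop:ext_jumps}) and then applying the stability estimate \eqref{eqn:EstLS}. This is not a fatal problem for your outline but it should be made consistent with whichever extension of the solution map you intend to use.
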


%
The proof of Theorem \ref{thm:main} is given in Section \ref{sec:proof} below. 

\revB{
\begin{corollary}\label{cor:two-sided_bounds}
  Let $u_0 \in BUC(\R^N)$, $\xi \in C(\R_+)$ and suppose that Assumptions \ref{asn:F}, \ref{asn:odeF} are satisfied by $F^+:=F$ and $F^-(t,x,r,p,X) := -F(t,x,-r,-p,-X)$. 
  Let $u$ be the unique viscosity solution to \eqref{eq:main_PDE} and suppose that $-\frac{Id}{\ell_0^-} \leq D^2u_0 \leq \frac{Id}{\ell_0^+}$ for some $\ell_0^\pm \in [0,\infty)$, in the sense of distributions. Then, for each $t \geq 0$,
  \begin{equation*} 
     \|D^2 u(t,\cdot)\|_{\infty} \leq \frac{1}{L^+(t)\wedge L^-(t)},
  \end{equation*}
  in the sense of distributions, where $L^\pm$ is the maximal continuous solution to \eqref{eq:reflected} with initial value $\ell_0^\pm$, drift $V_{F^\pm}$ and driven by $\pm\xi$.
\end{corollary}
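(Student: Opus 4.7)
The approach is to apply Theorem \ref{thm:main} twice: once directly to $u$ to obtain the upper bound $D^2 u(t,\cdot) \leq \tfrac{Id}{L^+(t)}$, and once to the reflected function $v(t,x) := -u(t,x)$ to produce the matching lower bound $D^2 u(t,\cdot) \geq -\tfrac{Id}{L^-(t)}$. Since $\|D^2 u(t,\cdot)\|_\infty$ is the smallest $C \ge 0$ for which $-C\cdot Id \le D^2 u(t,\cdot) \le C\cdot Id$, combining the two inequalities immediately gives $\|D^2 u(t,\cdot)\|_\infty \leq 1/(L^+(t)\wedge L^-(t))$.

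The first step is to verify that $v := -u$ is a viscosity solution of
\[
dv + \tfrac{1}{2}|Dv|^2 \circ d(-\xi)(t) = F^-(t,x,v,Dv,D^2v)\,dt, \quad v(0,\cdot) = -u_0.
\]
Formally this is immediate: $|Du|^2 = |Dv|^2$, and negating both the equation and the driver $\xi$ converts the equation for $u$ into the displayed one for $v$. Rigorously, I would first verify it for a smooth approximation $\xi^\varepsilon$, where the equation reduces to a classical PDE and the transformation is elementary, then pass to the limit via the stability statement for stochastic viscosity solutions recalled in Theorem \ref{thm:app_wp}. The initial-condition hypothesis transforms correctly, since $D^2 u_0 \geq -\tfrac{Id}{\ell_0^-}$ is equivalent to $D^2 v(0,\cdot) \leq \tfrac{Id}{\ell_0^-}$.

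The second step is to apply Theorem \ref{thm:main} to $v$ with the pair $(F^-,-\xi)$ in place of $(F,\xi)$. This is legitimate since by hypothesis $F^-$ satisfies Assumptions \ref{asn:F} and \ref{asn:odeF}, and the resulting maximal reflected equation \eqref{eq:reflected} is precisely the one defining $L^-$. The theorem then delivers $D^2 v(t,\cdot) \leq \tfrac{Id}{L^-(t)}$, i.e.\ $D^2 u(t,\cdot) \geq -\tfrac{Id}{L^-(t)}$, which is the lower bound sought.

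The only real obstacle is the first step---the transference of the viscosity-solution property under $u \mapsto -u$ and $\xi \mapsto -\xi$---and this is essentially bookkeeping given the uniqueness and stability framework of the Appendix. No new PDE estimates are needed beyond those already used for Theorem \ref{thm:main}.
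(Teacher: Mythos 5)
Your proposal is correct and coincides exactly with the paper's argument, which simply states that the corollary follows from Theorem \ref{thm:main} applied to $u$ and $-u$. You merely spell out the bookkeeping (the transformation $u\mapsto -u$, $\xi\mapsto -\xi$, $F\mapsto F^-$ and the corresponding change to the initial data) that the paper leaves implicit.
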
}
This corollary follows from Theorem \ref{thm:main} applied to $u$ and $-u$. 
%

\section{Applications}\label{sec:examples}
In this section we provide a series of PDE for which regularization by noise can be observed based on our main abstract Theorem \ref{thm:main}. 

\revP{We first present a series of PDE to which Assumption \ref{asn:odeF} applies. We defer the proof of this fact (as well as the statement of a more general criterion) to Section \ref{sec:semiconv}.

\begin{proposition}\label{ex:LMcond1}
\begin{enumerate}
\item First-order PDE: Let
    $$F=F(t,x,p)  \in C([0,T];C^2_b(\R^N \times \R^N)).$$
 Then Assumption \ref{asn:odeF} is satisfied with
    $$V_F(\ell) =  - \left\| F_{xx} \right\|_\infty \ell^2  -  2\left\| F_{xp} \right\|_\infty \ell - \left\| F_{pp} \right\|_\infty.$$
 More generally, let $F=F(t,x,p)  \in C([0,T]\times \R^N \times \R^N)$ such that $(x,p) \mapsto F(t,x,p)$ is semiconcave of order $C_F$. Then, Assumption \ref{asn:odeF} is satisfied with 
        $$V_F(\ell) =  - C_F(1+\ell^2).$$
\item Quasilinear PDE: Let
             $$F(x,p,A) = Tr(a(x,p) A) \in C(\R^N \times\R^N \times S^N),$$
            where $a(x,p) \in C^2(\R^N\times \R^N)$ is nonnegative, has bounded second derivative 
            and $(y,p)\mapsto \sqrt{a(y,p)}$ is convex. Then Assumption \ref{asn:odeF} is satisfied with
             $$V_F(\ell) = - N  \left\| a_{xx} \right\|_\infty \ell -  2N \left\| a_{xp} \right\|_\infty - N   \left\| a_{pp} \right\|_\infty \frac{1}{\ell}.$$
\item Monotone, concave, fully nonlinear PDE: Let 
    $$F = F(t,A) \in C([0,T] \times S^N)$$
  be concave and non-decreasing in $A\in S^N$. Then Assumption \ref{asn:odeF} is satisfied with $V_F = 0$.  
\item One-dimensional, fully nonlinear PDE: Let $F=F(t,x,p,A)\in C([0,T]\times\R \times \R \times \R)$ 
such that $(x,p) \mapsto F(t,x,p,A)$ is semiconcave of order $C_F(A)$. Then, Assumption \ref{asn:odeF} is satisfied with
     $$V_F(\ell) =  - C_F(1+\ell^2).$$
\end{enumerate}
\end{proposition}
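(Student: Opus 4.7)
The plan is to verify Assumption \ref{asn:odeF} in all four cases through a unified quadratic supersolution / comparison argument. Given $g \in BUC(\R^N)$ with $D^2 g \leq Id/\ell$ in the sense of distributions, at each base point $x_0 \in \R^N$ the function $g$ admits a tangent quadratic from above of the form $g(x) \leq g(x_0) + p_0 \cdot (x-x_0) + \tfrac{1}{2\ell}|x-x_0|^2$. I would propagate such a tangent quadratic forward in time as a smooth ansatz $Q(t,x) = \alpha(t) + \beta(t)\cdot (x-\gamma(t)) + \tfrac{c(t)}{2}|x-\gamma(t)|^2$ chosen so that $Q$ is a classical supersolution of the PDE $\partial_t Q \geq F(t,x,Q,DQ,D^2Q)$, and then invoke the viscosity comparison guaranteed by Assumption \ref{asn:F} to transfer the bound to $S_F(t,g)$. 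The parameters $\alpha,\beta,\gamma$ will be used to absorb the constant and linear (in $x-\gamma$) parts of the Taylor expansion of $F$ around the moving base point, and the curvature $c(t)$ will be pinned down by matching the coefficient of $|x-\gamma|^2$. Setting $c(t) = 1/\ell(t)$, this matching should yield exactly $\dot\ell \leq V_F(\ell)$, and the maximal choice coincides with the flow $\vp^{V_F}$.

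The case-by-case identification of $V_F$ then proceeds as follows. \textbf{(1)} For $F = F(t,x,p)$, Taylor expansion of $F(t,x,\beta + c(x-\gamma))$ around $(\gamma,\beta)$ produces a quadratic coefficient bounded by $\tfrac12(\|F_{xx}\|_\infty + 2c\|F_{xp}\|_\infty + c^2\|F_{pp}\|_\infty)$, which under $c = 1/\ell$ yields the stated $V_F$. The semiconcave-in-$(x,p)$ variant follows from the same calculation, noting that along the ansatz $p = \beta + c(x-\gamma)$ one has $|(x,p)-(\gamma,\beta)|^2 = (1+c^2)|x-\gamma|^2$, so the quadratic coefficient is at most $\tfrac12 C_F(1+c^2)$, giving $V_F(\ell) = -C_F(1+\ell^2)$. \textbf{(2)} For the quasilinear case $F(x,p,A) = \mathrm{Tr}(a(x,p)A)$, the same ansatz gives $F(x,DQ,D^2Q) = c \cdot \mathrm{Tr}(a(x,\beta + c(x-\gamma)))$, so the expansion carries an extra factor of $c$, and the trace contributes a factor $N$ per derivative; the corresponding matching gives the stated $V_F$. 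The convexity of $\sqrt{a}$ will be invoked to place the quasilinear equation within the HJB/comparison framework compatible with Assumption \ref{asn:F}, making the barrier step rigorous. \textbf{(3)} For concave monotone $F = F(t,A)$, I would switch to a Jensen-type argument: for any $h \in \R^N$, $\bar u(t,x) := \tfrac12(u(t,x+h) + u(t,x-h))$ is a viscosity subsolution by concavity of $F$ in $A$, while $u(t,x) + \tfrac{|h|^2}{2\ell}$ is a classical supersolution with matching initial data, so comparison preserves the initial semiconcavity bound and $V_F \equiv 0$. \textbf{(4)} In one dimension, $A = D^2 u$ is a scalar and $F$ is monotone in $A$ by Assumption \ref{asn:F}(i); at a base point saturating the upper bound $D^2 u \leq c$, replacing $A$ by $c$ upper-bounds $F$ by the first-order Hamiltonian $\tilde F(t,x,p) := F(t,x,p,c)$, which is $C_F(c)$-semiconcave in $(x,p)$, so the problem reduces to the semiconcave variant of (1).

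The main obstacle will be to rigorously justify these formal Taylor expansions given that $S_F(t,g)$ is only a continuous viscosity solution a priori. This is resolved by the fact that the constructed barrier $Q$ is smooth and, by the coefficient matching above, is verified to be a classical supersolution (hence a viscosity supersolution) of the PDE. Comparison between viscosity subsolutions and classical supersolutions, available under Assumption \ref{asn:F} and recalled in Appendix \ref{sec:visc_soln}, then gives the pointwise bound $S_F(t,g)(x) \leq Q(t,x)$ for every choice of base point, which translates to the distributional statement $D^2 S_F(t,g) \leq Id/\vp^{V_F}(t;\ell)$ by the standard pointwise characterization of semiconcavity. A secondary but minor point is the truncation of $\vp^{V_F}$ at $\ell = 0$, which is already built into its definition in the Notation section and simply reflects that in cases (1), (2), (4) the bound becomes vacuous once semiconcavity has been lost; in case (3) this never happens, consistent with $V_F \equiv 0$.
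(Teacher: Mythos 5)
There is a genuine gap in the barrier argument you propose for cases (1), (2), and (4). The viscosity comparison principle does give $S_F(t,g)(x)\leq Q(t,x)$ for every propagated paraboloid, but this one-sided domination is \emph{not} the pointwise characterization of semiconcavity: to conclude $D^2 S_F(t,g)\leq \mathrm{Id}/\varphi^{V_F}(t;\ell)$ you would need, for each $y\in\R^N$, a paraboloid of curvature $1/\varphi^{V_F}(t;\ell)$ that lies above $S_F(t,g)$ \emph{and touches it at $y$}. Your ansatz propagates the base value $\alpha(t)$ and base point $\gamma(t)$ only so as to enforce the supersolution inequality; nothing forces $Q(t,\gamma(t))=S_F(t,g)(\gamma(t))$ at positive times, and in fact strict inequality is generic (e.g.\ for $\partial_t u = u_{xx}$ with a quadratic initial datum, the propagated paraboloid and the solution drift apart linearly in $t$). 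Equivalently: being dominated by a $c$-semiconcave envelope does not make a function $c$-semiconcave, and the infimum over base points $x_0$ of your barriers $Q_{x_0}(t,\cdot)$ need not coincide with $S_F(t,g)(t,\cdot)$ for second-order $F$. This is precisely the obstruction the paper avoids by the Lions--Musiela partial convex envelope argument (Proposition~\ref{thm:LM}, Theorem~\ref{thm:semi-conv}): one shows $u_{**}$ is a \emph{super}solution, so comparison with the subsolution $u$ forces $u=u_{**}$, delivering the two-sided identity that semiconcavity requires. The convexity hypothesis on $(y,A)\mapsto F(t,x+y,p,A^{-1})$ (via the Alvarez--Lasry--Lions fact that $A\mapsto A^{-1}$ is convex on positive matrices) is what makes this work for degenerate second-order PDE and is entirely absent from your scheme.

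Your formal coefficient matching, which recovers the stated $V_F$ by Taylor-expanding $F$ along the moving paraboloid, is consistent with what the rigorous argument ultimately produces, so that part is not wasted. Your treatment of case (3) via the translation-averaging Jensen argument ($\bar u(t,x)=\tfrac12(u(t,x+h)+u(t,x-h))$ is a subsolution by concavity of $F$ in $A$) is correct in spirit and genuinely different from the paper's route; it is cleaner for translation-invariant $F=F(t,A)$, though it too needs either a Jensen--Ishii lemma or the same approximation-by-classical-solutions step that the paper carries out via Corollary~\ref{lem:StabSemiconv}. For cases (1), (2), (4), however, the barrier scheme as written does not close, and you would essentially have to rediscover the convex-envelope machinery (or a doubling-of-variables argument controlling the second difference $u(x+h)+u(x-h)-2u(x)$ directly) to obtain the claimed conclusion.
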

}

\begin{theorem}\label{thm:plp}
  We consider the quasilinear PDE
  \begin{equation*}\begin{aligned} 
    d u + \frac{1}{2}|Du|^2 \circ d\xi(t) &= a(D u)\D u\;dt\quad\text{on }[0,T]\times\R^N, \\
    u(0) &= u_0,
  \end{aligned}\end{equation*}
  where $u_0\in (BUC\cap W^{1,\infty})(\R^N)$, $a\in C^2(\R^N)$ is nonnegative such that $p\mapsto \sqrt{a(p)}$ is convex. Then,
    $$\|D^2u(t,\cdot)\|_\infty \le \frac{1}{L^+(t)\wedge L^-(t)},$$
  where $L^\pm$ are the maximal solutions on $\R_+$ to 
  \begin{align*}
    dL^+(t) &= -\frac{N\|a_{pp}\|_{L^\infty(B_{R}(0))}}{L^+(t)}+d\xi(t),\quad L^+(0) = \frac{1}{\|(D^2 u_0)_+\|_\infty},\\
    dL^-(t) &= -\frac{N\|a_{pp}\|_{L^\infty(B_{R}(0))}}{L^-(t)}-d\xi(t),\quad  L^-(0) = \frac{1}{\|(D^2 u_0)_-\|_\infty},
  \end{align*}
  with $R := \left\|D u_0\right\|_\infty$.
  
  In particular this includes the $p$-Laplace equation in one space dimension
    $$d u + \frac{1}{2}|\partial_x u|^2 \circ d\xi(t)= \frac{1}{m}\partial_x(\partial_x u)^{[m]}\;dt, $$
  with $a(p)=|p|^{m-1}$ and $m\ge 3$.
\end{theorem}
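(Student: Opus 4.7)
The plan is to apply Corollary \ref{cor:two-sided_bounds} with $F(p,X) := a(p)\,\mathrm{tr}(X)$, together with the quasilinear case of Proposition \ref{ex:LMcond1}(2), which will furnish the drift $V_F$. Because $F$ depends only on $p$ and $X$, the structural items in Assumption \ref{asn:F} (degenerate ellipticity from $a\ge 0$, trivial $r$-Lipschitzness, boundedness at the origin, joint continuity in $(X,p,x)$) are essentially immediate, and become fully automatic once $a$ is replaced by a bounded $C^2$ modification as described below. The real work is to identify $V_F$ with the explicit drift $-N\|a_{pp}\|_{L^\infty(B_R(0))}/\ell$ claimed in the theorem; this is not a direct application of Proposition \ref{ex:LMcond1}(2), since $a_{pp}$ need not be globally bounded (e.g.\ $a(p)=|p|^{m-1}$ for $m>3$).

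To handle this, I would first derive the a priori gradient bound $\|Du(t,\cdot)\|_\infty \le R := \|Du_0\|_\infty$ for all $t\ge 0$. Since $F$ has no explicit $x$-dependence, spatial translates $u(t,\cdot+h)$ are themselves viscosity solutions, so the comparison principle of Theorem \ref{thm:app_wp} yields
\[
\|u(t,\cdot+h) - u(t,\cdot)\|_\infty \le \|u_0(\cdot+h) - u_0\|_\infty \le R|h|,
\]
from which the gradient bound follows by dividing and sending $h\to 0$. This reduces the problem to controlling $a$ on $\overline{B_R(0)}$.

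Next, I would replace $a$ by $\tilde a \in C^2_b(\R^N)$ that agrees with $a$ on $\overline{B_R(0)}$, remains nonnegative with $p\mapsto\sqrt{\tilde a(p)}$ convex, and has $\|\tilde a_{pp}\|_\infty$ arbitrarily close to $\|a_{pp}\|_{L^\infty(B_R(0))}$ (a smooth cutoff gluing $\sqrt{a}|_{B_R}$ to an affine continuation, and a subsequent limiting procedure, will do). By uniqueness, the viscosity solution of the modified equation coincides with $u$. Proposition \ref{ex:LMcond1}(2) applied to $\tilde a$ — noting $\tilde a_{xx} = \tilde a_{xp} = 0$ since $\tilde a = \tilde a(p)$ — then gives Assumption \ref{asn:odeF} with $V_F(\ell) = -N\|\tilde a_{pp}\|_\infty/\ell$. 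The analogous verification for $F^-(p,X) = -F(-p,-X) = a(-p)\,\mathrm{tr}(X)$ is identical in structure (replace $a(\cdot)$ by $a(-\cdot)$), yielding the same $V_{F^-}$. Invoking Corollary \ref{cor:two-sided_bounds} and sending $\|\tilde a_{pp}\|_\infty \downarrow \|a_{pp}\|_{L^\infty(B_R(0))}$, together with stability of the maximal solution of \eqref{eq:reflected} under drift perturbations, closes the stated inequality.

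The $p$-Laplace case is a direct specialization: $a(p)=|p|^{m-1}$ gives $\sqrt{a(p)} = |p|^{(m-1)/2}$, convex precisely for $m\ge 3$, and $a_{pp}(p)=(m-1)(m-2)|p|^{m-3}$ is locally bounded, so all hypotheses are satisfied. The step I expect to require the most care is the construction of $\tilde a$: one must simultaneously guarantee $C^2$-regularity, nonnegativity, convexity of $\sqrt{\tilde a}$ across the gluing region, and a sharp control on $\|\tilde a_{pp}\|_\infty$. Coupling this with the passage-to-the-limit in the reflected drift — which should follow from a standard comparison/stability argument for the one-dimensional reflected ODE \eqref{eq:reflected} with monotone dependence on the drift — is the delicate technical ingredient.
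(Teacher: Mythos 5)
Your proposal follows essentially the same route as the paper's proof: invoke Corollary \ref{cor:two-sided_bounds} with $V_F$ given by Proposition \ref{ex:LMcond1}(2), and reduce to the ball $B_R(0)$ using the a priori Lipschitz bound $\|Du(t,\cdot)\|_\infty\le\|Du_0\|_\infty$ (which the paper gets by citing Lemma \ref{lem:Lipvisc}, and you re-derive by the same underlying translation-plus-comparison argument). The only cosmetic difference is that the paper asserts directly that one may modify $a$ outside $B_R(0)$ so that $\|a_{pp}\|_{L^\infty(\R^N)}=\|a_{pp}\|_{L^\infty(B_R(0))}$, whereas you spell out the truncation as an approximation $\tilde a$ together with a limiting step and a stability argument for the reflected ODE — slightly more verbose, but the same argument.
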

\begin{proof}
  We aim to apply Theorem \ref{thm:main}. Hence, we have to verify Assumption \ref{asn:odeF}.
 
 Fix $v_0$ in $BUC\cap W^{1,\infty})(\R^N)$ and let $v$ be the (unique bounded) viscosity solution to
  \begin{equation} \label{eq:aDelta}
    \partial_t v = a(D v)\D v, \;\;\;\;
     v(0) = v_0.
  \end{equation}
  Note that by Lemma \ref{lem:Lipvisc}, one has $\|Dv(t)\|_\infty \leq \|Dv_0\|_\infty$, so that modifying $a$ outside of the ball of radius $R$ does not change the solution to \eqref{eq:aDelta}, and we may assume that $\|a_{pp}\|_{L^\infty(\R^N)} = \|a_{pp}\|_{L^\infty(B_{R}(0))}$.
  
By Proposition \ref{ex:LMcond1} (2), Assumption \ref{asn:odeF} holds for both $F^+(p,A) = -Tr(a(p)A)$ and $F^-(p,A) = -Tr(a(-p)A)$  with both of $V_F^{\pm}$ given by
\[V(\ell) = -\frac{N\|a_{pp}\|_{L^\infty(B_{R}(0))}}{\ell}.\]
The result then follows from Corollary \ref{cor:two-sided_bounds}.
\end{proof}

\revB{
\begin{corollary} 
  Under the same assumptions on $a$ and $u_0$ as in Theorem \ref{thm:plp} consider the SPDE 
  \begin{equation*}\begin{aligned} 
      d u + \frac{\sigma}{2}|Du|^2 \circ d\beta(t) &= a(D u)\D u\;dt\quad\text{on }[0,T]\times\R^N, \\
      u(0) &= u_0,
    \end{aligned}\end{equation*}
    with $\sigma>0$ and $\beta$ a standard Brownian motion. Let $R := \left\|D u_0\right\|_\infty$. Then, if $\sigma^2 > 2N\|a_{pp}\|_{L^\infty(B_{R}(0))}$, $t>0$,
          $$\|D^2 u(t)\|_{\infty} < \infty \quad\mathbb{P}\text{-a.s.}.$$
\end{corollary}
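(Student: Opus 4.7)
The plan is to invoke Theorem \ref{thm:plp} with the driver $\xi(t) := \sigma \beta(t)$. This directly yields
$$\|D^2 u(t,\cdot)\|_\infty \leq \frac{1}{L^+(t)\wedge L^-(t)},$$
where $L^\pm$ are the maximal non-negative continuous solutions to the reflected SDE
$$dL^\pm(t) = -\frac{c}{L^\pm(t)}\,dt \pm \sigma\,d\beta(t) \text{ on } \{L^\pm>0\}, \qquad L^\pm \geq 0,$$
with $c := N\|a_{pp}\|_{L^\infty(B_R(0))}$. Since $-\beta$ is itself a Brownian motion, $L^+$ and $L^-$ are identically distributed, and the claim reduces to showing $\P(L(t) > 0) = 1$ for each fixed $t>0$, where $L$ denotes either of the two processes.

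Next I would analyze this reflected SDE by spatial rescaling. Setting $\tilde L := L/\sigma$, on $\{\tilde L > 0\}$ it satisfies
$$d\tilde L(t) = -\frac{c/\sigma^2}{\tilde L(t)}\,dt + d\beta(t),$$
which, comparing with the standard Bessel SDE $dR = \frac{\delta-1}{2R}\,dt + dB$, is the reflected Bessel SDE of dimension $\delta := 1 - 2c/\sigma^2$. The hypothesis $\sigma^2 > 2c$ is exactly $\delta > 0$. For Bessel processes of positive dimension it is classical (see, e.g., Revuz--Yor, Chapter XI) that the origin is either an entrance boundary, reached with probability zero (when $\delta \geq 2$), or instantaneously reflecting (when $0 < \delta < 2$); in either case the set $\{t : \tilde L(t) = 0\}$ has Lebesgue measure zero, so $\P(\tilde L(t) > 0) = 1$ for every fixed $t>0$. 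This yields $\P(L^+(t) \wedge L^-(t) > 0) = 1$ and hence $\|D^2 u(t,\cdot)\|_\infty < \infty$ $\P$-a.s.

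The main technical point is to identify the ``maximal continuous solution'' provided by Theorem \ref{thm:plp} with the standard Bessel process. This is unambiguous in the regime $\delta > 0$: the Bessel process is the unique non-negative continuous solution starting from a non-negative initial value (it leaves the origin instantaneously rather than being absorbed there), and hence coincides with the maximal continuous solution. An alternative absorbed-at-zero solution would appear only when $\delta \leq 0$, which is precisely the regime excluded by the assumption $\sigma^2 > 2c$. With this identification secured, the Bessel-process analysis of the previous paragraph applies without further complication and the corollary follows.
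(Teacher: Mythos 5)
Your overall strategy is sound and parallels the paper's: reduce to a boundary-classification question for the one-dimensional reflected SDE and show the origin is not absorbing when $\sigma^2>2N\|a_{pp}\|_{L^\infty(B_R)}$. Where the paper invokes the general Feller test (Proposition \ref{prop:reflection_for_BM}, computing $I^\pm$ for $V(\ell)=-c/\ell$ after rescaling), you identify the rescaled process directly as a Bessel process of dimension $\delta=1-2c/\sigma^2$ and use the classical Bessel dichotomy; this is a perfectly valid and somewhat more concrete route.

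There is, however, a genuine error in your justification of the step linking the Bessel process to the paper's "maximal continuous solution." You claim that for $\delta>0$ the Bessel process is the \emph{unique} non-negative continuous solution of the reflected SDE, with absorption at zero only possible when $\delta\le 0$. That is false in the paper's framework: by the definition preceding Proposition \ref{prop:Xhat}, the SDE is only required to hold on intervals where $X>0$, so the path that solves the SDE up to the first hitting time of zero and then stays at zero is \emph{always} an element of $\mathcal{S}(V,\xi,x)$ (the paper says so explicitly). In particular, for $0<\delta<2$ the process a.s. hits zero and there are (infinitely) many elements of $\mathcal{S}$, the absorbed one being the smallest. The correct argument — which is exactly the one the paper uses in case (1) of Proposition \ref{prop:reflection_for_BM} — is not uniqueness but domination: since the instantaneously reflecting Bessel process $X^{\mathrm{Bes}}$ is one element of $\mathcal{S}(V,\sigma\beta,\ell_0)$ and $\hat X$ is maximal, one has $\hat X\ge X^{\mathrm{Bes}}$, hence $\P(\hat X(t)=0)\le\P(X^{\mathrm{Bes}}(t)=0)=0$ for each fixed $t>0$. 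With that replacement your argument is complete; as written, the justification is incorrect and would give a reader the wrong picture of the solution set.
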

\begin{proof} Immediate consequence of Theorem \ref{thm:plp} together with Proposition \ref{prop:reflection_for_BM} below.
\end{proof}}

\begin{theorem} 
  We consider the first-order PDE
    $$d u + \frac{1}{2}|Du|^2 \circ d\xi(t) = F(D u)dt\quad\text{on }\R^N,$$
  where $u_0\in (BUC \cap W^{1,\infty})(\R^N)$ and $F\in C^2(\R^N)$. Then, 
    \begin{equation*} 
       \|D^2 u(t,\cdot)\|_{L^{\infty}} \le \frac{1}{L^+(t)\wedge L^-(t)},
    \end{equation*}
    where $L^\pm$ are the maximal continuous solutions on $\R_+$ to 
   \begin{equation}\label{eq:HJ-bd-2}\begin{aligned}
       dL^+(t) &= -\|F_{pp}\|_{L^\infty(B_{R}(0))}dt+d\xi(t),\quad L^+(0) = \frac{1}{\|(D^2 u_0)_+\|_\infty},\\
       dL^-(t) &= -\|F_{pp}\|_{L^\infty(B_{R}(0))}dt-d\xi(t), \quad  L^-(0) = \frac{1}{\|(D^2 u_0)_-\|_\infty},
     \end{aligned}\end{equation}
    where $R=\left\|D u_0\right\|_\infty$. 
\end{theorem}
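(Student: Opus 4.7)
The plan is to apply Corollary \ref{cor:two-sided_bounds} after verifying Assumption \ref{asn:odeF} for the Hamiltonian $F$ (and for $F^-(p):=-F(-p)$) via Proposition \ref{ex:LMcond1}(1). The one subtlety is that $F \in C^2(\R^N)$ is not assumed to have globally bounded derivatives, so Proposition \ref{ex:LMcond1}(1) cannot be invoked directly; we will reduce to a $C_b^2$ problem by a standard truncation argument, using that the Lipschitz norm of the solution is preserved.

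First, set $R:=\|Du_0\|_\infty$. By Lemma \ref{lem:Lipvisc} (propagation of the Lipschitz bound for viscosity solutions of translation-invariant SPDE of the form \eqref{eq:main-SPDE}), one has $\|Du(t,\cdot)\|_\infty \leq R$ for all $t\geq 0$. Consequently, only the values of $F$ on $\overline{B_R(0)}$ enter the equation, so we may pick a cutoff $\tilde F \in C_b^2(\R^N)$ with $\tilde F \equiv F$ on $\overline{B_R(0)}$ and $\|\tilde F_{pp}\|_\infty = \|F_{pp}\|_{L^\infty(B_R(0))}$; by well-posedness of \eqref{eq:main-SPDE} (Theorem \ref{thm:app_wp}), the unique viscosity solution is unchanged if $F$ is replaced by $\tilde F$.

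Since $\tilde F$ depends only on $p$, Proposition \ref{ex:LMcond1}(1) applies and yields that Assumption \ref{asn:odeF} is satisfied with
\[
V_{\tilde F}(\ell) = -\|\tilde F_{xx}\|_\infty \ell^2 - 2\|\tilde F_{xp}\|_\infty \ell - \|\tilde F_{pp}\|_\infty = -\|F_{pp}\|_{L^\infty(B_R(0))},
\]
the first two terms vanishing because $\tilde F$ is $x$-independent. The same reasoning applied to $\tilde F^-(p) := -\tilde F(-p)$, whose second derivative satisfies $(\tilde F^-)_{pp}(p) = -\tilde F_{pp}(-p)$ and hence $\|(\tilde F^-)_{pp}\|_{L^\infty(B_R(0))} = \|F_{pp}\|_{L^\infty(B_R(0))}$ by symmetry of $B_R(0)$, gives the same drift for $F^-$.

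Finally, Corollary \ref{cor:two-sided_bounds} yields
\[
\|D^2 u(t,\cdot)\|_\infty \leq \frac{1}{L^+(t) \wedge L^-(t)},
\]
where $L^\pm$ are the maximal continuous solutions to \eqref{eq:reflected} with drift $V_{F^\pm} \equiv -\|F_{pp}\|_{L^\infty(B_R(0))}$, driven by $\pm\xi$ and started from $\ell_0^\pm = 1/\|(D^2u_0)_\pm\|_\infty$; this is exactly \eqref{eq:HJ-bd-2}. The only potential obstacle is ensuring that the truncation step preserves the solution and does not enlarge $\|F_{pp}\|_{L^\infty(B_R(0))}$, which is handled by the a priori Lipschitz bound and the uniqueness statement of Theorem \ref{thm:app_wp}.
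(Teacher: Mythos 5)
Your proposal is correct and takes essentially the same route as the paper: the paper's proof is the one-line statement ``As in the proof of Theorem \ref{thm:plp}, this is a direct consequence of Corollary \ref{cor:two-sided_bounds} and of Proposition \ref{ex:LMcond1} (1)'', and the truncation-to-$B_R(0)$ step you spell out (via Lemma \ref{lem:Lipvisc} and well-posedness) is precisely what is invoked implicitly by the reference to the proof of Theorem \ref{thm:plp}, where it is carried out for the quasilinear coefficient $a$.
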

\begin{proof}
As in the proof of Theorem \ref{thm:plp}, this is a direct consequence of Corollary \ref{cor:two-sided_bounds} and of Proposition \ref{ex:LMcond1} (1).
\end{proof}

\revB{
The proof of Theorem \ref{thm:into-hyp} now follows from the fact that the solutions $L^\pm$ to \eqref{eq:HJ-bd-2} \revP{with initial condition $L^\pm(0)=0$} are given by
\begin{align*}
  L^+(t) &= (\xi(t) -\|F_{pp}\|_{L^\infty(B_{R}(0))}t) - \min_{s\in[0,t]} (\xi(s)-\|F_{pp}\|_{L^\infty(B_{R}(0))}s)\\
  L^-(t) &= \max_{s\in[0,t]} (\xi(s)+\|F_{pp}\|_{L^\infty(B_{R}(0))}s) - (\xi(t) +\|F_{pp}\|_{L^\infty(B_{R}(0))}t).
\end{align*}}
\revP{Then, if $\xi=\beta^H$ is a fractional Brownian motion then for all $t >0$ one has $\P$-a.s. that
\[ \limsup_{ s \uparrow t} \frac{\xi(t)-\xi(s)}{t-s} = \limsup_{ s \uparrow t} \frac{\xi(s)-\xi(t)}{t-s} = +\infty,\]
so that $L^+(t) \wedge L^-(t) >0$.
}
\begin{theorem}
  We consider the quasilinear, one-dimensional PDE
  \begin{equation*}\begin{aligned}
    \partial_t u +\frac{1}{2}|\partial_x u|^2\circ d\xi(t) &= F(\partial_{xx}u)dt,\\
    u(0) &=u_0 \in BUC(\R),
  \end{aligned}\end{equation*}
  where $F\in C^0(\R)$ is non-decreasing. Then, 
  \begin{equation}\label{eq:one-dim}
     \|\partial_{xx} u(t,\cdot)\|_{L^{\infty}} \le \frac{1}{L^+(t)\wedge L^-(t)},
  \end{equation}
  where 
    $$L^+(t)= \xi(t)-\min_{s\in [0,t]}\xi(s) ,\ L^-(t)= \max_{s\in [0,t]}\xi(s)-\xi(t) .$$
\end{theorem}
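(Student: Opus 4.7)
The plan is to apply Corollary \ref{cor:two-sided_bounds} together with part (4) of Proposition \ref{ex:LMcond1}. First I would observe that $F=F(A)$ depends only on $A$, and hence, viewed as a function of $(x,p)$, is trivially semiconcave of order $C_F(A)\equiv 0$. Since $F$ is non-decreasing in $A$, Assumption \ref{asn:F} is satisfied for $F^+:=F$, and Proposition \ref{ex:LMcond1} (4) gives that Assumption \ref{asn:odeF} holds with $V_{F^+}\equiv 0$. The conjugate nonlinearity $F^-(A):=-F(-A)$ is also non-decreasing (composition of two non-increasing maps) and depends only on $A$, so the same argument applies and gives $V_{F^-}\equiv 0$.

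Next I would make the trivial choice of initial semiconcavity/semiconvexity constants $\ell_0^\pm=0$, for which the constraint $-\tfrac{Id}{\ell_0^-}\le D^2 u_0\le \tfrac{Id}{\ell_0^+}$ is vacuous; this is what is needed since no $C^{1,1}$ bound is assumed on $u_0$. Corollary \ref{cor:two-sided_bounds} then yields
\begin{equation*}
  \|\partial_{xx} u(t,\cdot)\|_{L^\infty}\le \frac{1}{L^+(t)\wedge L^-(t)},
\end{equation*}
where $L^\pm$ is the maximal continuous non-negative solution to the reflected ODE driven by $\pm\xi$ with zero drift,
\begin{equation*}
  dL^\pm(t) = \pm d\xi(t) \text{ on }\{t:L^\pm(t)>0\},\qquad L^\pm\ge 0,\qquad L^\pm(0)=0.
\end{equation*}

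The last step is to identify these reflected paths explicitly. By uniqueness in the Skorokhod problem for a continuous path reflected at $0$, the solution of $dL=d\xi$, $L\ge 0$, $L(0)=0$ is the Tanaka formula
\begin{equation*}
  L^+(t)=\xi(t)-\min_{s\in[0,t]}\xi(s),
\end{equation*}
and similarly, replacing $\xi$ by $-\xi$, $L^-(t)=\max_{s\in[0,t]}\xi(s)-\xi(t)$. Plugging these explicit expressions back into the previous display gives \eqref{eq:one-dim}.

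There is no real difficulty in this argument: the only points that require some care are checking that part (4) of Proposition \ref{ex:LMcond1} indeed applies with the trivial constant $C_F\equiv 0$ (which it does, since $F$ has no $(x,p)$-dependence), that $F^-$ inherits monotonicity from $F$, and that the choice $\ell_0^\pm=0$ is admissible in Corollary \ref{cor:two-sided_bounds}; the identification of $L^\pm$ with the Skorokhod reflection of $\pm\xi$ is standard.
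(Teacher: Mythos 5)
Your proof is correct and takes essentially the same approach as the paper: the paper's proof consists precisely of the observation that $L^\pm$ are the maximal solutions to the zero-drift reflected ODE driven by $\pm\xi$ started at $0$, followed by an appeal to Corollary \ref{cor:two-sided_bounds} and Proposition \ref{ex:LMcond1}~(4), which is exactly what you do (with the identification of the reflected paths via the explicit Skorokhod formula justified by Proposition \ref{prop:rsde}, since $V\equiv 0$ trivially extends Lipschitz-continuously to $[0,\infty)$).
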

\begin{proof}
Note that the $L^{\pm}$  are the maximal continuous solutions to $dL^{\pm} = \pm d\xi$, $L^\pm \geq 0$, $L^\pm(0)=0$. The results is then immediate from Corollary \ref{cor:two-sided_bounds} and Proposition \ref{ex:LMcond1} (4).
\end{proof}

\begin{remark} 
  We emphasize that the estimate \eqref{eq:one-dim} is uniform in $F$ and $u_0$. For example, consider $F^m(r) := r^{[m]} = |r|^{m-1}r \to \sgn(r)$ for all $r\in\R$ for $m\to 0$ and let $u_0^m \in (BUC \cap W^{1,1})(\R)$ with $u_0^m \to u_0$ in $W^{1,1}(\R)$. Then, at least formally, \eqref{eq:one-dim} continues to hold for the limit
      $$d u  +\frac{1}{2}|\partial_x u|^2\circ d\xi(t) = \sgn(\partial_{xx}u)dt$$
    implying Lipschitz bounds for the stochastic total variation flow
      $$d v +\frac{1}{2}\partial_x v^2\circ d\xi(t) = \partial_{x}\sgn(\partial_{x}v)dt.$$
    These bounds improve the deterministic case. Indeed, in \cite[Section 2.5]{BF12} it has been shown that the solution $v(t,\cdot)$ to the total variation flow in one spatial dimension
  \begin{equation*} 
     \partial_t v = \partial_x\sgn(\partial_xv)
  \end{equation*}
 is a step-function if $v_0$ is. In particular, for $v_0 \in BV(\R)$ one only has $v(t)\in BV(\R)$ in general. 
\end{remark}

\begin{proposition} \label{prop:lt}
 Let $u$ be the solution to
\begin{equation}\begin{aligned}\label{eq:ltb}
    du +\frac{1}{2}|Du|^2\circ d\xi(t)  &= F(Du,D^2 u) dt,\\
    u(0) &=u_0 \in BUC(\R^N),
  \end{aligned}\end{equation}
 where $F$ satisfies the assumptions of Theorem \ref{thm:main}. Then for all $t \geq 0$
\begin{equation*} 
\left\|Du(t,\cdot) \right\|_{\infty} \leq \inf_{0 \leq s \leq t} \sqrt{\frac{2 \left( \sup u_0 - \inf u_0\right)}{L^+(s) \vee L^-(s)}}
\end{equation*}
where $L^\pm$ are the bounds on $D^2u$ from Theorem \ref{thm:main}.

\end{proposition}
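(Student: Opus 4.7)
My plan is to combine the two-sided $D^2u$ estimate from Corollary~\ref{cor:two-sided_bounds} (applied at an intermediate time $s \in [0,t]$) with the following elementary fact: if $g \in BUC(\R^N)$ is semiconcave of order $C>0$ in the sense of distributions, then
$$\|Dg\|_\infty \leq \sqrt{2C\,(\sup g - \inf g)},$$
and the analogous bound holds if $g$ is instead semiconvex of order $C$. To prove this, note that distributional semiconcavity implies local Lipschitz continuity and a.e.\ differentiability. At any point of differentiability $x$, choosing $y = x - \lambda Dg(x)/|Dg(x)|$ with $\lambda>0$ in
$$g(y) \leq g(x) + Dg(x)\cdot (y-x) + \tfrac{C}{2}|y-x|^2$$
and optimizing in $\lambda = |Dg(x)|/C$ gives $|Dg(x)|^2/(2C) \leq g(x)-g(y)\leq \sup g - \inf g$; the semiconvex case follows by replacing $g$ with $-g$.

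Applying this to $g = u(s,\cdot)$, which by Corollary~\ref{cor:two-sided_bounds} is simultaneously semiconcave of order $1/L^+(s)$ and semiconvex of order $1/L^-(s)$, and using whichever one-sided bound is tighter, yields
\begin{equation*}
\|Du(s,\cdot)\|_\infty \leq \sqrt{\frac{2\,(\sup u(s,\cdot) - \inf u(s,\cdot))}{L^+(s)\vee L^-(s)}}
\end{equation*}
for every $s \in [0,t]$.

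Two time-monotonicity properties then remain. First, since $Du\equiv D^2u\equiv 0$ for constants, the affine-in-time functions $\sup u_0+F(0,0)t$ and $\inf u_0+F(0,0)t$ are (classical) solutions of \eqref{eq:ltb}; the comparison principle sandwiches $u(s,\cdot)$ between them and gives $\sup u(s,\cdot)-\inf u(s,\cdot)\leq \sup u_0-\inf u_0$ for all $s\geq 0$. Second, because \eqref{eq:ltb} is translation invariant in $x$ and invariant under addition of constants to $u$ (no $u$-dependence in the Hamiltonian or in $F$), comparison applied to $u(\cdot,\cdot+h)$ and $u(\cdot,\cdot)+\|u(s,\cdot+h)-u(s,\cdot)\|_\infty$ yields $\|u(t,\cdot+h)-u(t,\cdot)\|_\infty\leq \|u(s,\cdot+h)-u(s,\cdot)\|_\infty$ for $s\leq t$ and any $h$, so that $\|Du(\cdot,\cdot)\|_\infty$ is non-increasing in time. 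Combining these with the previous display gives
\begin{equation*}
\|Du(t,\cdot)\|_\infty \leq \|Du(s,\cdot)\|_\infty \leq \sqrt{\frac{2\,(\sup u_0-\inf u_0)}{L^+(s)\vee L^-(s)}}
\end{equation*}
for every $s\in[0,t]$, and taking the infimum in $s$ finishes the argument.

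The only mildly delicate point is reconciling the distributional nature of the $D^2u$ bound produced by Theorem~\ref{thm:main} with the pointwise optimization step; this is standard for semiconcave functions (which are a.e.\ differentiable and locally Lipschitz), so no real obstacle arises. All other ingredients, namely the facts that constants and spatially shifted solutions are again solutions, and the $L^\infty$-contractivity consequences of the comparison principle, are direct consequences of the well-posedness theory for stochastic viscosity solutions recalled in Appendix~\ref{sec:visc_soln}.
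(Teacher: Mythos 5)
Your proof is correct and takes essentially the same route as the paper, which invokes the elementary bound $\|Du\|_\infty \le \sqrt{2C(\sup u - \inf u)}$ for semiconcave/semiconvex $u$ (citing \cite[p.240]{L82}) and the time-monotonicity of $\sup u(t,\cdot) - \inf u(t,\cdot)$ and $\|Du(t,\cdot)\|_\infty$ (via Lemma~\ref{lem:Lipvisc}), both of which you reprove directly by the very same arguments.
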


\begin{proof}
This is an immediate consequence of Theorem \ref{thm:main}, noting that if $u$ is semiconcave (or semiconvex) of order $C$ then $\|Du\|_\infty \leq \sqrt{2 C \left( \sup u - \inf u\right)}$ (e.g. \cite[p.240]{L82}), and the fact that since the coefficients in \eqref{eq:ltb} only depend on $Du$ and $D^2u$, $\left( \sup u(t,\cdot) - \inf u(t,\cdot)\right)$ and $\|Du(t,\cdot)\|_\infty$ are nonincreasing in $t$ (cf. Lemma \ref{lem:Lipvisc}).
\end{proof}

\section{Proof of Theorem \ref{thm:main}}\label{sec:proof}

The proof of Theorem \ref{thm:main} is based on a Trotter-Kato splitting scheme for \eqref{eq:main-SPDE}.  The estimate \eqref{eq:main_bd} is then proven for the corresponding approximating solutions $u^n$ with respect to a discretization $L^n$ of $L$, based on semiconvexity estimates for $S_H$, with $H(p) = \frac{1}{2}|p|^2$. The corresponding estimates are derived in Section \ref{sec:semiconvex} below. The rest of the proof then consists in proving the convergence of the approximations $L^n$ (cf.\ Section \ref{sec:rSDE} below) and $u^n$ (cf.\ Section \ref{sec:trotter} below). Finally, the proof of Theorem \ref{thm:main} is given in Section \ref{sec:proof}.

\subsection{Inf- and sup-convolution estimates}\label{sec:semiconvex}

In this section we provide Lipschitz and semiconvexity estimates for $S_H$ with $H(p) = \frac{1}{2}|p|^2$. We refer to \cite{L82,LL86} for related arguments.

Recall that \revP{for $\phi\in BUC(\R^N)$,} $S_H(\delta,\phi)$ can be written as 
$$S_H(\delta,\phi)(x) =
\begin{cases}
  \sup_{y \in \R^N} \left( \phi(y) - \frac{|x-y|^2}{2\delta}\right),& \text{if }\d\ge 0 \\
  \inf_{y \in \R^N} \left( \phi(y) + \frac{|x-y|^2}{2|\delta|}\right),& \text{if }\d\le 0.
\end{cases}$$
\revP{
We then extend the definition of $S_H(\delta,\phi)$ to arbitrary $\phi: \R^N \to \R$ by the above formula ($S_H(\delta,\phi)$ may possibly take the values $+\infty$ or $-\infty$).
}

\begin{lemma} \label{lem:InfSup}
If $\phi:\R^N \to \R$ is convex (resp. concave), then so is $S_H(\delta,\phi)$, for all $\delta \in \R$. 
\end{lemma}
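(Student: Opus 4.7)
The plan is to reduce the concave case to the convex one via a simple sign-flipping symmetry, and then to handle the two sign conventions for $\delta$ by two elementary manipulations of the defining formula.

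First I would observe the identity $S_H(\delta,-\phi) = -S_H(-\delta,\phi)$ for every $\delta \in \R$, which is immediate from the definitions: swapping $\phi \mapsto -\phi$ and pulling the minus sign outside turns the $\sup$ appearing for $\delta \geq 0$ into an $\inf$ without changing $|\delta|$, which is precisely the formula for $S_H(-\delta,\cdot)$; the opposite sign is symmetric. This identity reduces the case ``$\phi$ concave'' to the case ``$\phi$ convex'', so from now on I assume that $\phi$ is convex and aim to show that $S_H(\delta,\phi)$ is convex.

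For $\delta \geq 0$ I would perform the change of variables $z := y - x$ to rewrite
\[ S_H(\delta,\phi)(x) = \sup_{z \in \R^N}\Bigl(\phi(x+z) - \frac{|z|^2}{2\delta}\Bigr). \]
For each fixed $z$, the integrand is a translate of $\phi$ shifted by a $z$-dependent constant, hence convex in $x$; since the supremum of an arbitrary family of convex functions is convex, so is $S_H(\delta,\phi)$.

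For $\delta \leq 0$ I would instead recognize
\[ S_H(\delta,\phi)(x) = \inf_{y \in \R^N}\Bigl(\phi(y) + \frac{|x-y|^2}{2|\delta|}\Bigr) \]
as the inf-convolution of $\phi$ with the convex quadratic $q(z) := |z|^2/(2|\delta|)$; the inf-convolution of two convex functions is convex, reflecting that inf-convolution corresponds to Minkowski addition of epigraphs, which preserves convexity. No serious obstacle arises; the only subtlety worth mentioning is that $\phi$ (and hence $S_H(\delta,\phi)$) is allowed to take infinite values, but both convexity facts invoked above hold verbatim for extended-real-valued convex functions, and the case $\delta=0$ is trivial since the formula then reduces to $\phi$ itself.
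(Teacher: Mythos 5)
Your proof is correct. Both you and the paper rely on the same symmetry $S_H(\delta,-\phi)=-S_H(-\delta,\phi)$, but you use it in the opposite direction: the paper fixes the sign of $\delta$ (reducing to $\delta>0$) and then treats the concave and convex cases of $\phi$ separately, whereas you fix the convexity of $\phi$ (reducing the concave case to the convex one) and then treat $\delta\ge 0$ and $\delta\le 0$ separately. The common sub-case, $\phi$ convex with $\delta>0$, is handled identically in both proofs by rewriting the sup-convolution as a supremum of translates of $\phi$ and invoking that a pointwise supremum of convex functions is convex. The remaining sub-case is the same one in disguise: the paper's explicit computation for concave $\phi$ and $\delta>0$, using the parametrization $y=\lambda y_1+(1-\lambda)y_2$ together with concavity of $\phi$ and of $-\tfrac{1}{2\delta}|\cdot|^2$, is precisely the standard proof that inf-convolution of convex functions is convex, which you cite as a known fact (via Minkowski addition of epigraphs) instead of rederiving. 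So your argument is a mild reorganization that packages one sub-case as a named convex-analysis fact; the paper's version is self-contained at the cost of spelling it out. Both are valid, and your remark that the statements hold for extended-real-valued functions is correct and addresses the only point where one might worry.
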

\begin{proof}
We will prove the claim only for $\delta>0$, the case $\delta <0$ then follows noting that $S_H(\delta,-\phi)=-S_H(-\delta,\phi)$.

We begin by the case when $\phi$ is concave. Then for any $x_1, x_2 \in \R^N$ and $\lambda \in [0,1]$,
\begin{align*}
&S_H(\delta,\phi)(\lambda x_1 + (1-\lambda) x_2) \\
&= \sup_{y\in\R^N} \left\{ \phi(y) - \frac{1}{2\delta} \left| y -(\lambda x_1 + (1-\lambda) x_2) \right|^2\right\} \\
&= \sup_{y_1,y_2\in\R^N} \left\{ \phi(\lambda y_1 + (1-\lambda) y_2) - \frac{1}{2\delta} \left| \lambda(y_1 -x_1) + (1-\lambda)(y_2- x_2) \right|^2\right\} \\
&\geq \lambda  \sup_{y_1\in\R^N} \left\{ \phi(y_1)   - \frac{1}{2\delta} \left|y_1 -x_1\right|^2 \right\} + (1-\lambda) \sup_{y_2\in\R^N}  \left\{\phi(y_2) - \frac{1}{2\delta} \left| y_2- x_2\right|^2\right\} \\
&= \lambda S_H(\delta,\phi)(x_1) + (1-\lambda) S_H(\delta,\phi)(x_2),
\end{align*}
where in the third inequality we have used the concavity of $\phi$ and of $ - 1/(2\delta) |\cdot|^2$.

We now assume that $\phi$ is convex. Then for $x_1, x_2 \in \R^N$ and $\lambda \in [0,1]$,
\begin{align*}
& S_H(\delta,\phi)(\lambda x_1 + (1-\lambda) x_2) 
\\&= \sup_{z\in\R^N} \left\{ \phi\left(\lambda x_1 + (1-\lambda) x_2 - z\right) - \frac{1}{2\delta} \left| z\right|^2\right\} \\
&\leq \sup_{z\in\R^N} \left\{ \lambda ( \phi(x_1 -z) - \frac{1}{2\delta} |z|^2 ) + (1-\lambda) (\phi(x_2 -z) - \frac{1}{2\delta} |z|^2 )\right\} \\
&\leq  \lambda \sup_{z\in\R^N} \left\{ \phi(x_1 -z) - \frac{1}{2\delta} |z|^2 \right\} + (1-\lambda)\sup_{z\in\R^N} \left\{ \phi(x_2 -z) - \frac{1}{2\delta} |z|^2\right\}  \\
&=  \lambda S_H(\delta,\phi)(x_1) + (1-\lambda) S_H(\delta,\phi)(x_2).
\end{align*}
\end{proof}

\begin{proposition} \label{prop:InfSup}
Let $\phi \in BUC(\R^N)$, $\psi=S_H(\phi,\delta)$ for some $\delta \in \R$ and
$\lambda \in [0,\infty)$. Then
\begin{equation} \label{eq:D2+}
D^2 \phi \leq \lambda^{-1} Id \;\; \Rightarrow D^2 \psi\leq (\lambda - \delta)^{-1}_+ Id,
\end{equation}
\begin{equation}\label{eq:D2-}
D^2 \phi \geq -  \lambda^{-1} Id \;\; \Rightarrow D^2  \psi \geq -(\lambda + \delta)^{-1}_+ Id. 
\end{equation}
\end{proposition}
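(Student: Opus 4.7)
The strategy is to reduce both estimates to the preservation-of-convexity/concavity statement of Lemma \ref{lem:InfSup} via an elementary completion of the square. First, the identity $S_H(\delta,\phi)=-S_H(-\delta,-\phi)$ turns the sup-convolution into the inf-convolution under $\phi\mapsto-\phi$, thereby exchanging semiconcavity with semiconvexity and the sign of $\delta$. Consequently, \eqref{eq:D2+} for $\delta\leq 0$ follows from \eqref{eq:D2-} for $\delta\geq 0$ and vice versa, so it suffices to establish both \eqref{eq:D2+} and \eqref{eq:D2-} in the sup-convolution case $\delta\geq 0$.

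For \eqref{eq:D2-} with $\lambda>0$: the hypothesis says $g:=\phi+\frac{1}{2\lambda}|\cdot|^2$ is convex. A direct completion of the square in $y$ yields
\[
\phi(y)-\frac{|x-y|^2}{2\delta}
= g(y)-\frac{\lambda+\delta}{2\lambda\delta}\Bigl|y-\frac{\lambda}{\lambda+\delta}x\Bigr|^2 -\frac{|x|^2}{2(\lambda+\delta)},
\]
hence taking $\sup_y$ gives
\[
\psi(x)+\frac{|x|^2}{2(\lambda+\delta)}=S_H\!\Bigl(\frac{\lambda\delta}{\lambda+\delta},\,g\Bigr)\!\Bigl(\frac{\lambda}{\lambda+\delta}x\Bigr).
\]
Since $g$ is convex and the parameter $\frac{\lambda\delta}{\lambda+\delta}\geq 0$, Lemma \ref{lem:InfSup} yields that the right-hand side is convex in $x$ (affine precomposition preserves convexity), which is exactly \eqref{eq:D2-}. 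The case $\lambda=0$ (vacuous hypothesis) is the classical Lasry-Lions semiconvexity of the sup-convolution, and follows by rewriting $\psi(x)+\frac{|x|^2}{2\delta}=\sup_y\{\phi(y)+(2x\cdot y-|y|^2)/(2\delta)\}$ as a supremum of affine functions.

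For \eqref{eq:D2+} with $\lambda>\delta$: the hypothesis says $h:=\phi-\frac{1}{2\lambda}|\cdot|^2$ is concave. The analogous completion of the square produces
\[
\psi(x)-\frac{|x|^2}{2(\lambda-\delta)}=S_H\!\Bigl(\frac{\lambda\delta}{\lambda-\delta},\,h\Bigr)\!\Bigl(\frac{\lambda}{\lambda-\delta}x\Bigr),
\]
whose right-hand side is concave by Lemma \ref{lem:InfSup} (note $\frac{\lambda\delta}{\lambda-\delta}>0$), giving \eqref{eq:D2+}. For $\lambda\leq\delta$ the right-hand side of \eqref{eq:D2+} is $+\infty$ and the statement is trivial.

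\textbf{Expected obstacle.} The proof reduces to an algebraic identity combined with Lemma \ref{lem:InfSup}, so the only delicate points are the boundary regimes where the rescalings degenerate: $\lambda=0$ in \eqref{eq:D2-} is the classical sup-convolution bound, and $\lambda\leq\delta$ in \eqref{eq:D2+} gives a trivial bound. Both are handled directly, so no genuine technical obstruction arises.
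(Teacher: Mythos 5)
Your proof is correct and follows essentially the same route as the paper: reduce to $\delta\geq 0$ via $S_H(\delta,\phi)=-S_H(-\delta,-\phi)$, complete the square in $y$, and invoke Lemma \ref{lem:InfSup} after an affine change of variables. The paper leaves the constants $\alpha,\beta$ in the quadratic $\alpha|x-\beta y|^2$ implicit and says ``after an affine change of coordinates,'' whereas you make the rescaling explicit as $\psi(x)+\frac{|x|^2}{2(\lambda+\delta)}=S_H\bigl(\frac{\lambda\delta}{\lambda+\delta},g\bigr)\bigl(\frac{\lambda}{\lambda+\delta}x\bigr)$ and spell out the boundary cases $\lambda=0$ and $\lambda\leq\delta$, but these are presentational rather than mathematical differences.
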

\begin{proof} 

To prove \eqref{eq:D2+}, \eqref{eq:D2-}, we again may assume without loss of generality that $\delta >0$. We focus on \eqref{eq:D2-} namely we prove that if $\psi = S_H(\delta,\phi)$,
$$ \phi + \frac{1}{2\lambda}|\cdot|^2 \mbox{ convex } \Rightarrow  \psi + \frac{1}{2(\lambda + \delta)}|\cdot|^2 \mbox{ convex}.$$
Indeed,
\begin{eqnarray*}
\psi(x) +  \frac{1}{2(\lambda + \delta)}|x|^2  &=& \sup_{y \in \R^N} \left\{\phi(y) - \frac{1}{2\delta} |x-y|^2  +  \frac{1}{2(\lambda + \delta)}|x|^2\right\} \\
&=&\sup_{y \in \R^N} \left\{\phi(y) + \frac{1}{2\lambda} |y|^2 - \frac{1}{2\lambda} |y|^2 - \frac{1}{2\delta} |x-y|^2  +  \frac{1}{2(\lambda +\delta)}|x|^2\right\}.
\end{eqnarray*}
By a direct computation, $ \frac{1}{2\lambda} |y|^2 + \frac{1}{2\delta} |x-y|^2  - \frac{1}{2(\lambda +\delta)}|x|^2$ can be written as $\alpha |x- \beta y|^2$ for some $\alpha, \beta \geq 0$, so that (after an affine change of coordinates) one can apply Lemma \ref{lem:InfSup} to obtain convexity of $\psi + \frac{1}{2(\lambda + \delta)}|\cdot|^2$.

The proof of \eqref{eq:D2+} is similar (using the preservation of concavity from Lemma \ref{lem:InfSup}).
\end{proof}

%
%

\subsection{Reflected SDE}\label{sec:rSDE}

In this section we first study stability properties of solutions to reflected SDE and then their boundary behavior.

Let $V$ be locally Lipschitz on $(0,+\infty)$, bounded from above on $[1,\infty)$, and $\xi$ be a continuous path. In this section we study the maximal solution on $[0,T]$ to
\begin{equation} \begin{aligned}\label{eq:sde}
   dX(t) &= V(X(t)) dt + d\xi(t) \mbox{ on } \{X>0\},\ X \geq 0,\ X \mbox{ continuous}\\
   X(0) &= x \in \R_+.
\end{aligned}\end{equation}
More precisely, a function $X\in C([0,T];\R_+)$ is said to be a solution  to \eqref{eq:sde} if, for all $s\le t \in [0,T]$,
$$ X>0 \mbox{ on }[s,t] \;\; \Rightarrow \;\; X(t) = X(s) + \int_s^t V(X(u)) du + \xi_{s,t}.$$
Let $\mathcal{S}(V,\xi,x)$ be the set of solutions
. Note that by the assumptions on $V$ there exists a unique solution $X$ to \eqref{eq:sde} until $\tau = \inf\{t\ge0 : \lim_{s\uparrow t} X(s) =0\}$, and a particular element of $\mathcal{S}(V,\xi,x)$ is given by letting $X(t) \equiv 0$ for $t \geq \tau$.

\begin{proposition} \label{prop:Xhat}
Let $V$ be locally Lipschitz on $(0,+\infty)$, bounded from above on $[1,\infty)$, and $\xi$ be a continuous path.  Let
  $$\hat{X}(t) := \sup \left\{ Y(t) : Y \in \mathcal{S}(V,\xi,x)\right\} .$$
Then,
$\hat{X} \in \mathcal{S}(V,\xi,x)$.
\end{proposition}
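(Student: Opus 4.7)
The plan is to verify for $\hat X$ the three conditions defining an element of $\mathcal{S}(V,\xi,x)$: nonnegativity, continuity, and the integral relation $\hat X(t)=\hat X(s)+\int_s^t V(\hat X(u))du+\xi_{s,t}$ on any interval $[s,t]$ on which $\hat X>0$. Nonnegativity is immediate. Two preliminaries set up the argument. First, $\mathcal{S}$ is nonempty: the function obtained by running the ODE forward from $x$ up to its first zero and extending by $0$ thereafter is an element $Y^\flat\in\mathcal{S}$. Second, there is a uniform $L^\infty$ bound: writing $M:=\sup_{[1,\infty)}V<\infty$, on every maximal interval $(s,t)\subset[0,T]$ on which $Y\geq 1$, the integral relation yields $Y(u)\leq Y(s)+M(u-s)+\xi_{s,u}$ with $Y(s)\leq 1$ (or $s=0$ and $Y(0)=x$), giving $\|Y\|_\infty\leq C_0(x,T,\xi,M)$ uniformly over $Y\in\mathcal{S}$.

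Next I would record the key closure property: if $Y_n\in\mathcal{S}$ converges pointwise to a continuous nonnegative $Y$, uniformly on compact subsets of $\{Y>0\}$, then $Y\in\mathcal{S}$. The argument reduces, on a closed interval $[s,t]\subset\{Y>0\}$, to passing to the limit in the integral relation for $Y_n$: compactness gives $Y\geq 2\delta>0$ on $[s,t]$, so eventually $Y_n\geq\delta$, and local Lipschitz continuity of $V$ on $[\delta,C_0+1]$ forces $V(Y_n)\to V(Y)$ uniformly on $[s,t]$.

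The core step is the identification of $\hat X$ on its positivity set. On every connected component $(a,b)$ of the open set $\{\hat X>0\}$, I claim $\hat X$ coincides on $(a,b)$ with the unique forward ODE solution $Z$ started from $Z(a^+):=\lim_{u\downarrow a}\hat X(u)\geq 0$ and run up to its first zero. The inequality $\hat X\geq Z$ follows by approximating, for each $u_0\in(a,b)$, $\hat X(u_0)$ from below by $Y_n(u_0)$ with $Y_n\in\mathcal{S}$ (possible since $\hat X(u_0)>0$), and using continuous dependence on initial conditions of the ODE (valid while positive, by local Lipschitzness of $V$ on $(0,\infty)$) to see that $Y_n\to Z$ on a neighborhood of $u_0$ inside $(a,b)$. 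The converse $Y\leq Z$ on $(a,b)$ for each $Y\in\mathcal{S}$ follows by ODE comparison applied on each positive excursion of $Y$: if $Y$ restarts from zero at some $\sigma\in[a,b)$ then $Y(\sigma)=0<Z(\sigma)$, and comparison of the same ODE with common driver $\xi$ propagates this inequality forward; taking the supremum over $Y$ yields $\hat X\leq Z$. Together these give continuity of $\hat X$ and the integral relation on every component of $\{\hat X>0\}$.

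The main obstacle is continuity of $\hat X$ at times $s_0$ where $\hat X(s_0)=0$; lower semicontinuity, coming from $\hat X$ being a supremum of continuous functions, is immediate, so what remains is to rule out $\limsup_{t\to s_0}\hat X(t)>0$. Arguing by contradiction, suppose $t_n\to s_0$ with $\hat X(t_n)\geq c>0$. Each $t_n$ lies in a component $(a_n,b_n)$ of $\{\hat X>0\}$ on which, by the previous step, $\hat X$ satisfies the integral equation. The plan is to exploit a uniform modulus of continuity of these ODE solutions on the level set $\{\hat X\geq c/2\}$ arising from the bound $\sup_{[c/2,C_0+1]}|V|=:L_c$ together with continuity of $\xi$, giving $\hat X\geq c/2$ throughout $[t_n-\eta_c,t_n]\cap(a_n,b_n)$ for an $\eta_c>0$ independent of $n$. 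Combined with $a_n\to s_0$ and the lower semicontinuity of $\hat X$, this would give $\hat X(s_0)\geq c/2$, contradicting $\hat X(s_0)=0$. The delicate point here is to handle the case in which the component $(a_n,b_n)$ is itself too short for the modulus estimate to reach $s_0$, which requires an iterative descent through the levels $c,c/2,c/4,\ldots$ combined with control of the excursion structure of $\hat X$ near $s_0$.
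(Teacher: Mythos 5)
The paper's proof and your proposal diverge at the crucial step, and the point where you acknowledge the argument is incomplete is exactly where the genuine difficulty lies. The paper never tries to analyze the excursion structure of $\hat X$ directly. Instead it proves an \emph{equicontinuity} statement for the whole family $\mathcal{S}(V,\xi,x)$: with $M$ the uniform bound and $\omega_\varepsilon(r):=r\|V\|_{L^\infty([\varepsilon,M])}+\omega^\xi(r)$, one sets
$$\omega(r):=\inf_{\varepsilon>0}\bigl(2\varepsilon+2\omega_\varepsilon(r)\bigr),$$
and shows that $\omega$ is a modulus of continuity for \emph{every} $X\in\mathcal{S}(V,\xi,x)$ by a short case analysis: on $[s,t]$ either $X\geq\varepsilon$ throughout (use $\omega_\varepsilon$), or there are first/last times $s_1\leq t_1$ in $[s,t]$ with $X(s_1),X(t_1)\leq\varepsilon$ and $X\geq\varepsilon$ outside $[s_1,t_1]$, giving $|X(t)-X(s)|\leq 2\varepsilon+\omega_\varepsilon(t-t_1)+\omega_\varepsilon(s-s_1)$. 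The $\varepsilon$-threshold deals with arbitrarily many tiny excursions automatically, because the argument is applied to a single $X$, not to the supremum. A supremum of an equicontinuous family has the same modulus, so $\hat X$ is continuous. Finiteness and the integral relation then follow by passing to the limit along an increasing sequence in $\mathcal{S}$ (which exists because $\mathcal{S}$ is stable under taking maxima --- a fact you don't invoke).

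In your version, continuity of $\hat X$ at a zero $s_0$ is precisely where you stop. Your proposed contradiction argument only covers excursions $(a_n,b_n)$ long enough for the modulus $\eta_c$ to reach $s_0$; when the excursions are shorter, you would need to control $\limsup_{u\downarrow a_n}\hat X(u)$, which is again an upper-semicontinuity statement about $\hat X$ at a zero --- exactly the thing being proven. The ``iterative descent through levels $c,c/2,\dots$'' is a genuine additional idea that you describe but do not carry out, and it is not clear it terminates: many short excursions accumulating at $s_0$ can each overshoot a fixed level while having duration $<\eta_c$. There is also a circularity concern earlier: you identify $\hat X$ on an excursion with the ODE started from $\hat X(a^+)$, but existence of that one-sided limit and the inequality $\hat X\geq Z$ are established via arguments that already presuppose some regularity of $\hat X$ near $a$. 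I would recommend abandoning the excursion decomposition and instead proving the uniform modulus on $\mathcal{S}$ as above; once you have equicontinuity, both continuity and the passage to the limit in the integral relation become routine.
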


\begin{proof}
We first show that elements of $\mathcal{S}(V,\xi,x)$ are equibounded and equicontinuous. Indeed, it is easy to see that
$$M:=  x+1+ T \left\|V_+\right\|_{L^\infty([1,+\infty))} + 2 \left\| \xi_{0,\cdot}\right\|_{L^\infty([0,T])}$$
is an upper bound for $\hat{X}$. Then letting for $\varepsilon >0$
$$\omega_\varepsilon(r):= r \left\|V\right\|_{L^\infty([\varepsilon,M])} + \omega^\xi(r)$$
where $\omega^\xi$ is a modulus of continuity for $\xi$ on $[0,T]$, one sees that each element $X$ of $\mathcal{S}(V,\xi,x)$ admits $\omega_\varepsilon$ as a modulus of continuity on (connected subsets of) $\left\{X\geq \varepsilon\right\}$. \revP{Now let
$$\omega(r):= \inf_{\varepsilon>0} \left( 2 \varepsilon + 2\omega_\varepsilon(r)\right)$$
and note that $\limsup_{r \to 0} \omega(r) \leq \inf_{\varepsilon> 0} \left( 2 \varepsilon + 2\omega_\varepsilon(0^+)\right) = 0$. We now claim that $\omega$ is a modulus of continuity for $X$.} Indeed, given $s<t$ in $[0,T]$, either $X \geq \varepsilon$ on $[s,t]$, or there exist  $s_1 \leq t_1 \in [s,t]$ with $X(s_1),X(t_1) \leq \varepsilon$, with $X \geq \varepsilon$ on $(s,s_1)$ and $(t_1,t)$ (these intervals might be empty if $X \leq \varepsilon$ in $t$ or $s$). Then one has 
\begin{align*}
  \left|X(t) - X(s) \right| 
  &\leq \left|X(t) - X(t_1)\right| + \left|X(t_1)\right| + \left|X(s_1)\right| + \left|X(s)-X(s_1) \right|\\
  &\leq 2 \varepsilon + \omega_\varepsilon(t_1-t) + \omega_\varepsilon(s-s_1).
\end{align*}
It follows that $\hat{X}$ is non-negative, finite and continuous on $[0,T]$. Note that since $\mathcal{S}(V,\xi,x)$ is stable under the maximum operation, one can find an increasing sequence $X^n$ in $\mathcal{S}(V,\xi,x)$ converging to $\hat{X}$ uniformly. One then simply passes to the limit to check that
$$ \hat{X}>0 \mbox{ on }[s,t] \;\; \Rightarrow \;\; \hat{X}(t) = \hat{X}(s) + \int_s^t V(\hat{X}(u)) du + \xi_{s,t}.$$
\end{proof}

\revP{
For any given triplet $(V,\xi,x)$ as above, we will now denote by $\hat{X}(V,\xi,x)$ the maximal element of $\mathcal{S}(V,\xi,x)$ given by the previous proposition.
}

\begin{proposition} \label{prop:rsde}
Let $V$ admit a Lipschitz continuous extension to $[0,\infty)$. \revP{Let $(X,R)$ be the unique continuous solution to
\begin{equation}\begin{aligned} \label{eq:rsde}
  dX(t) &= V(X(t)) dt + d\xi(t) + dR(t),\ X \geq 0,\  dR\geq 0,\ dR(t) 1_{\{X(t)>0\}} = 0,\\
  X(0) &= x, \;\; R(0)=0.
\end{aligned}\end{equation}
Then  $X$ $=$ $\hat{X}(V,\xi,x)$.} In particular, $\xi\mapsto \hat{X}$ is continuous in supremum norm.
\end{proposition}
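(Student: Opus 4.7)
My plan is to identify $X$ with the maximal solution $\hat{X}=\hat{X}(V,\xi,x)$ constructed in Proposition \ref{prop:Xhat}; the continuity of $\xi\mapsto\hat{X}$ will then follow from the classical continuity of the Skorokhod--Picard solution map for the Lipschitz reflected SDE \eqref{eq:rsde}.

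First, I verify that $X\in\mathcal{S}(V,\xi,x)$. On any open interval $(s,t)\subset\{X>0\}$, the complementary condition $dR(u)\,\mathbf{1}_{\{X(u)>0\}}=0$ combined with $dR\geq 0$ and the continuity of $R$ (which follows from the equation, since every other term is continuous) forces $R$ to be constant on $[s,t]$. The defining equation then reduces to $X(t)=X(s)+\int_s^t V(X(u))\,du+\xi_{s,t}$, so $X\in\mathcal{S}(V,\xi,x)$, and in particular $X\leq\hat{X}$.

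For the reverse inequality, I will show $Y\leq X$ for every $Y\in\mathcal{S}(V,\xi,x)$ by a Gronwall estimate on each connected component of the open set $\{Y>X\}\subset(0,T]$. Since $Y(0)=X(0)=x$, any such component has the form $(a,b)$ with $a>0$ and $Y(a)=X(a)$ by continuity. On $(a,b)$ one has $Y>X\geq 0$, hence $Y>0$ throughout, so the membership $Y\in\mathcal{S}$ yields
\[Y(t)=Y(a)+\int_a^t V(Y(s))\,ds+\xi_{a,t},\qquad t\in[a,b].\]
Subtracting the reflected identity $X(t)=X(a)+\int_a^t V(X(s))\,ds+\xi_{a,t}+(R(t)-R(a))$, using $R(t)-R(a)\geq 0$ and the global Lipschitz property of $V$ on $[0,\infty)$ to bound $V(Y)-V(X)\leq L(Y-X)$ on $\{Y>X\}$, gives
\[(Y-X)(t)\leq L\int_a^t (Y-X)(s)\,ds,\qquad t\in[a,b],\]
and the integral form of Gronwall (combined with $(Y-X)(a)=0$ and $(Y-X)\geq 0$ on $[a,b]$) forces $Y\equiv X$ on $[a,b]$, contradicting $Y>X$ on $(a,b)$. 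Hence $\{Y>X\}=\emptyset$, so $Y\leq X$, which gives $\hat{X}\leq X$ and thus $X=\hat{X}$.

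Finally, the continuity statement reduces to the standard fact that the map $\xi\mapsto X$ associated to the reflected SDE \eqref{eq:rsde} with globally Lipschitz drift $V$ is continuous from $(C([0,T];\R),\|\cdot\|_\infty)$ into itself, via Picard iteration based on the $1$-Lipschitz continuity of the Skorokhod reflection map $\phi\mapsto\phi+\sup_{s\leq\cdot}(-\phi(s))_+$ and the Lipschitz property of $V$. The main obstacle in the plan is really step 2: one has to notice that the reflection increment $R(t)-R(a)$ on a component of $\{Y>X\}$ enters the comparison with the sign needed to be dropped in Gronwall, and that the definition of $\mathcal{S}$ supplies the ODE representation for $Y$ precisely because the strict inequality $Y>X\geq 0$ forces $Y>0$ on such components.
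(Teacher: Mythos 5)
Your proof is correct and follows essentially the same route as the paper: first observe that the Skorokhod solution $X$ belongs to $\mathcal{S}(V,\xi,x)$, then compare $X$ with a generic (or, in the paper, the maximal) element of $\mathcal{S}$ on a maximal interval where the latter exceeds $X$, exploit that strict positivity there yields the ODE representation, drop the nonnegative $dR$-increment, and close with Gronwall starting from zero at the left endpoint. The only cosmetic difference is that the paper applies the Gronwall comparison directly to $\hat{X}$ (using Proposition \ref{prop:Xhat} to know $\hat{X}\in\mathcal{S}$), while you argue with an arbitrary $Y\in\mathcal{S}$, which is logically equivalent.
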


\begin{proof}
Let ${X}$ solve \eqref{eq:rsde}. Since ${X} \in \mathcal{S}(V,\xi,x)$, 
clearly ${X} \leq \hat{X}$. Then if $\hat{X}>{X}$ on $[s,t]$, clearly $\hat{X}>0$ on this interval, so that
\begin{align*}
  \hat{X}(t)- {X}(t) 
  &=  (\hat{X}(s)- {X}(s)) + \int_s^t (V(\hat{X}(u)) - V({X}(u)) ) du - \int_s^t dR(u)\\ &\leq (\hat{X}(s)- {X}(s)) + \int_s^t C_V \left|\hat{X}(u)-{X}(u)\right| du
\end{align*}
\revP{where $C_V$ is the Lipschitz constant of $V$}, so that by Gronwall's lemma 
  $$\hat X(t)- {X}(t) \leq  (\hat X(s)- {X}(s)) e^{C_V(t-s)}.$$
Letting $s  \downarrow \inf \{r \in [0,t]:\ \hat{X} > {X} \text{ on } [r,t]\}$ we obtain that $\hat{X}(t)- {X}(t) \leq 0$, a contradiction.
\end{proof}

%

\begin{proposition} \label{prop:TKsde}
Let $\xi \in C([0,T])$, $V \in Lip(\R_+)$ \revP{and bounded from above,} with associated flow $\varphi^V$. Let $\{t_i^n\}_{n \geq 0}$ be a sequence of partitions of $[0,T]$ with step size $\pi^n := \sup_{i} |t^n_{i+1} - t^n_i|$ $\to$ $0$ as $n$ $\to$ $\infty$. For $n \geq 0$, define $L^n$ by
\begin{equation}\label{eq:discrete_SDE}\begin{aligned}
  L^n(t^n_{i+1}) &=  \left(\varphi^V(t^n_{i+1}-t^n_{i}, L^n_{t^n_{i}}) + \xi_{t^n_{i}, t^n_{i+1}} \right)_+\\
  L^n(0) &= \ell_0.
\end{aligned}\end{equation}
 Let $(L,R)$ be the (unique continuous) solution to the reflected SDE
\begin{align*}
  dL(t) &= V(L(t)) dt + d\xi(t) + dR(t), \;\;L(t) \geq 0, \; dR(t) \geq 0, \; 1_{\{L(t)>0\}}dR(t)=0\\
  L(0) &= \ell_0,\;\;\;R(0)=0.
\end{align*}
Then, $L^n$ converges uniformly to $L$ on $[0,T]$.
\end{proposition}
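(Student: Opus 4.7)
The plan is to view both $L$ and the discrete iterates $L^n$ through the one-sided Skorokhod reflection map $\Gamma$, defined on continuous paths $Y:[0,T]\to\R$ with $Y(0)\geq 0$ by
\[ \Gamma[Y](t) := Y(t) + \sup_{s\in[0,t]}(-Y(s))_+,\]
which takes values in $C([0,T];\R_+)$ and is 2-Lipschitz in supremum norm. As a preliminary, using that $V$ is Lipschitz and bounded above on $\R_+$ together with the positive-part operation in \eqref{eq:discrete_SDE}, an induction on $i$ yields $\sup_n \max_i L^n(t_i^n) \leq M$ for some explicit $M = M(\ell_0, T, \|V_+\|_\infty, \|\xi\|_\infty)$; consequently $V$ is bounded by some $C_V$ and Lipschitz with some $K_V$ on the relevant range $[0,M]$.

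The key step is to introduce the auxiliary path
\[ Y^n(t) := \ell_0 + \int_0^t V^n(s)\,ds + \xi(t), \qquad V^n(s) := V(\varphi^V(s-t_i^n, L^n(t_i^n))) \text{ for } s \in [t_i^n, t_{i+1}^n). \]
Since $\varphi^V(\Delta,x)-x = \int_0^\Delta V(\varphi^V(u,x))\,du$, the scheme \eqref{eq:discrete_SDE} rewrites at partition points as $L^n(t_{i+1}^n) = (L^n(t_i^n) + Y^n(t_{i+1}^n) - Y^n(t_i^n))_+$, and an easy induction on $j$ yields the discrete Skorokhod representation
\[ L^n(t_j^n) = Y^n(t_j^n) + R^n(t_j^n), \qquad R^n(t_j^n) = \max\Bigl(0, \max_{0\leq i \leq j} (-Y^n(t_i^n))\Bigr). \]
Since $|V^n| \leq C_V$ uniformly and $\xi$ is uniformly continuous with modulus $\omega^\xi$ on $[0,T]$, the path $Y^n$ has uniform modulus of continuity $\omega_Y(r) := C_V r + \omega^\xi(r)$. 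Comparing the discrete maximum over partition points to the continuous supremum appearing in $\Gamma[Y^n]$ then gives the key proximity estimate
\[ \max_j \bigl| L^n(t_j^n) - \Gamma[Y^n](t_j^n)\bigr| \leq \omega_Y(\pi^n).\]

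On the continuous side, uniqueness of the reflected SDE (which follows from the Lipschitz assumption on $V$ via a fixed-point argument for $\Gamma[\ell_0 + \int V(\cdot) + \xi]$) yields $L = \Gamma[Y]$ with $Y(t) := \ell_0 + \int_0^t V(L(s))\,ds + \xi(t)$. Combining the 2-Lipschitz property of $\Gamma$, Lipschitz continuity of $V$ on $[0,M]$, and the bound $|\varphi^V(s-t_i^n, L^n(t_i^n)) - L^n(t_i^n)| \leq C_V \pi^n$, one obtains for $f(t) := \sup_{t_i^n \leq t}|L^n(t_i^n) - L(t_i^n)|$ an inequality of the form
\[ f(t) \leq \omega_Y(\pi^n) + 2 K_V \int_0^t \bigl( f(s) + C_V \pi^n + \omega_L(\pi^n) \bigr)\,ds,\]
where $\omega_L$ is a modulus of continuity for $L$. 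Gronwall's lemma gives $f(T) \to 0$ as $\pi^n \to 0$, and the upgrade to uniform convergence on $[0,T]$ follows from the uniform modulus of continuity of $L$ together with that of a suitable continuous extension of $L^n$. The main obstacle is the second step: establishing the explicit discrete Skorokhod formula for $R^n$ and the resulting $\omega_Y(\pi^n)$-proximity between the discrete scheme and the continuous reflection $\Gamma[Y^n]$. Once that bridge between discrete and continuous reflections is in place, the remainder is a routine Gronwall argument.
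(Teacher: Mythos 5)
The proposal takes a genuinely different route from the paper (the paper extracts an Arzel\`a--Ascoli subsequential limit and identifies it with the reflected SDE; you aim for a quantitative one-step Skorokhod--Gronwall argument), but there is a real gap at the heart of the argument.

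The identity
\[
\varphi^V(\Delta,x)-x = \int_0^\Delta V(\varphi^V(u,x))\,du
\]
is \emph{false} for the flow $\varphi^V$ as defined in the paper, because $\varphi^V$ is the ODE flow \emph{stopped at $0$}. If $V(0)<0$ and $x$ is small, the flow reaches $0$ at some time $\tau_0<\Delta$ and then stays there; the left-hand side equals $-x$, while the right-hand side equals $-x+(\Delta-\tau_0)V(0)<-x$. Consequently the scheme does \emph{not} satisfy $L^n(t^n_{i+1})=\bigl(L^n(t^n_i)+Y^n(t^n_{i+1})-Y^n(t^n_i)\bigr)_+$ at such steps, and the claimed discrete Skorokhod representation $L^n(t^n_j)=Y^n(t^n_j)+\max\bigl(0,\max_{i\le j}(-Y^n(t^n_i))\bigr)$ breaks down. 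The per-step defect $|V(0)|(\Delta_i-\tau_0^i)$ accumulates to $|V(0)|\cdot\mathrm{Leb}\{u: L^n(u)=0\}$, which is $O(1)$ rather than $o(1)$ as $\pi^n\to 0$, so the claimed $\omega_Y(\pi^n)$-proximity between $L^n(t^n_j)$ and $\Gamma[Y^n](t^n_j)$ does not follow. Note this is precisely the main case in applications: in the proof of Theorem \ref{thm:main}, $V$ is a Lipschitz extension of a drift which is singular (negative) at $0$, so $V(0)<0$.

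The paper handles exactly this issue by introducing the extra reflection term $R^{n,2}(s)=(-V(0))\int_0^s 1_{\{L^n(u)=0\}}\,du$, which absorbs the excess drift $V(0)$ accrued while the stopped flow sits at zero, and then shows $R^n=R^{n,1}+R^{n,2}$ is nondecreasing and satisfies the complementarity condition in the limit. In your framework the analogous fix would be to bypass the explicit discrete Skorokhod formula entirely: show directly that the extended $L^n$ satisfies a Skorokhod system with driver $\hat Y^n(t)=\ell_0+\int_0^t V(L^n(s))\,ds+\xi^n_{0,t}$ (where $\xi^n$ is the piecewise-constant interpolation of $\xi$) and a nondecreasing regulator, hence $L^n=\Gamma[\hat Y^n]$, and then run Gronwall on $\|\hat Y^n-Y\|_\infty$. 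That can work, but it requires essentially the same bookkeeping around the stopped flow as the paper's $R^{n,2}$, and is a substantive correction rather than a cosmetic one. As written, the proposal is not correct.

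Two smaller remarks: (i) you should make explicit that the Skorokhod-map fixed-point argument yielding existence/uniqueness of $(L,R)$ and the representation $L=\Gamma[Y]$ needs $V$ globally Lipschitz on $[0,\infty)$, which is indeed assumed here; (ii) the final Gronwall step is fine once the proximity estimate is repaired, and your upgrade from partition points to uniform convergence via moduli of continuity is the standard one and matches what the paper implicitly does.
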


\begin{proof} 

\revP{Given $n$, $i$ $\geq 0$, let $k = \sup\{ j \leq i, t^n_j = 0\}$, or $k=0$ if this set is empty. Then one has
\[L^n(t^n_i) \leq L^n(t^n_k) + \|V_+\|_{\infty} (t_i^n - t_k^n) + |\xi_{t_k^n,t_i^n}| \leq \ell_0 + \|V_+\|_{\infty}  T + 2 \|\xi\|_{\infty}.\]
Hence, the $(L^n(t^n_i))$ are uniformly bounded, and since $V$ is continuous we may assume w.l.o.g. that $V$ is bounded.

We then note that there exists a modulus $\tilde{\omega}$ such that for all $n$, for all $t_i^n \leq t^n_j$, one has
\begin{equation} \label{eq:modLn}
\left|L^n(t^n_i) - L^n(t^n_j) \right| \| \leq \tilde{\omega}(t^n_j - t^n_i).
\end{equation}
}
Indeed, taking $t_i^n < t^n_j$, we distinguish two cases :

(1) If $L^n(t^n_k)>0$, for each $i < k < j$, we then have 
$$\left|L^n(t^n_i) - L^n(t^n_j) \right| \| \leq \|V \|_\infty (t^n_j - t^n_i) + \omega(t^n_j - t^n_i),$$ where $\omega$ is the modulus of continuity of $\xi$.

(2) Otherwise considering the first and last times where $L^n = 0$ between $t_i^n$ and  $t^n_j$ and applying the above bound, we obtain $$\left|L^n(t^n_i) - L^n(t^n_j) \right| \leq 2 \left(\| V \|_\infty (t^n_j - t^n_i) + \omega(t^n_j - t^n_i)\right).$$

\revP{
We then extend $L^n$ to all of $[0,T]$ by letting $L^n(0)=\ell_0$ and then
$$L^n(s) = L^n(t^n_i) + \int_{t^n_i}^{s\wedge \rho^n_i} V(L^n(u)) du, \;\; t^n_i \leq s < t^n_{i+1},  \mbox{ where } \rho^n_i = \inf\{s > t_i^n, L^n(s)=0\},$$
$$  L^n(t^n_{i+1}) =  \left( L^n(t^n_{i+1} -) + \xi_{t^n_i,t^n_{i+1}} \right)_+.$$

We then obtain from \eqref{eq:modLn} that for all $t \leq t'$ in $[0,T]$,
\begin{equation*}
\left|L^n(t') - L^n(t)\right| \leq  \varepsilon_n + \tilde{\omega}(t'-t),
\end{equation*}
where $\varepsilon_n = \|V \|_\infty \pi_n + \omega(\pi_n)$ $\to$ $0$ as $n \to \infty$. By an Arzelà-Ascoli argument, this implies that, passing to a subsequence if necessary, $L^n \to \hat{L}$ (locally uniformly) for some continuous $\hat{L}$, and it is enough to show that $\hat{L} = L$.
}

Letting $$R^{n,1}(s) := \sum_{t_{i+1}^n \leq s} \left(L^n(t^n_{i+1} -) + \xi_{t^n_i,t^n_{i+1}} \right)_- , $$
\revP{$$R^{n,2}(s) := (-V(0)) \int_0^s 1_{\{L^n(u) = 0\}} du,$$
note that $R^{n,2}$ is identically $0$ unless $V(0)< 0$, so that $R^n:= R^{n,1}+R^{n,2}$ is nondecreasing.} In addition, one has
$$L^n(t^n_i) = \int_0^{t^n_i} V(L^n(s))ds + \xi_{0, t^n_i} + R^n(t^n_i),$$
and it follows that $R^n$ converges uniformly to some $\hat{R}$, which is nondecreasing and such that
$$\hat{L}(t) = \int_0^t V(\hat{L}(s))ds + \xi_{0,t} + \hat{R}(t).$$
\revP{Note that this implies in particular that $\hat{R}$ is continuous}. It only remains to prove that $\hat{L}(t) d\hat{R}(t) =0$. Assume that $\hat{L}(s) \geq \varepsilon >0$. Then for $n$ large enough, one has $L^n(s) \geq \varepsilon/2$, and then taking $h$ such that for instance $ \|V \|_\infty h + \omega(h) \leq \varepsilon/4$, one has $L^n >0$ on $[s-h, s+h]$. In particular, $dR^n([s-h, s+h]) = 0$, and passing to the limit, $d\hat{R}([s-h,s+h]) =0$, and we have proven that $1_{\{ \hat{L}(t) \geq \varepsilon\}} d\hat{R}(t)=0$, for all $\varepsilon >0$.
\end{proof}

\begin{proposition} \label{prop:compRsde}
Let $V^1, V^2$ be locally Lipschitz on $(0,+\infty)$, bounded from above on $[1,\infty)$, $\xi$ be a continuous path, $x$ $\in$ $\R_+$, and let $\hat{X}^1=\hat{X}(V^1,\xi,x)$, $\hat{X}^2=\hat{X}(V^2,\xi,x)$. Then 
$$V^1 \geq V^2 \mbox{ on } (0,+\infty) \;\Rightarrow \; \hat{X}^1 \geq \hat{X}^2 \mbox{ on } \R_+.$$
\end{proposition}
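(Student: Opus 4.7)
\emph{Lipschitz drifts.} Assume first that both $V^i$ admit bounded Lipschitz extensions to $\R_+$, so that by Propositions \ref{prop:TKsde} and \ref{prop:rsde}, $\hat X^i$ equals the uniform limit of the Trotter--Kato iterates
\[
L^{i,n}(t^n_{k+1}) = \bigl(\varphi^{V^i}(t^n_{k+1}-t^n_k;\, L^{i,n}(t^n_k)) + \xi_{t^n_k,t^n_{k+1}}\bigr)_+, \quad L^{i,n}(0)=x.
\]
I would show by induction on $k$ that $L^{1,n}(t^n_k) \ge L^{2,n}(t^n_k)$: if the inequality holds at step $k$, then the monotonicity of the $V^1$-flow in its initial condition (Gronwall for Lipschitz drifts) combined with the classical ODE comparison principle (using $V^1 \ge V^2$) gives
\[
\varphi^{V^1}(t^n_{k+1}-t^n_k; L^{1,n}(t^n_k)) \;\ge\; \varphi^{V^1}(t^n_{k+1}-t^n_k; L^{2,n}(t^n_k)) \;\ge\; \varphi^{V^2}(t^n_{k+1}-t^n_k; L^{2,n}(t^n_k)),
\]
and adding the common increment $\xi_{t^n_k,t^n_{k+1}}$ followed by the nondecreasing map $r \mapsto r_+$ preserves the ordering. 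Letting $n \to \infty$ yields $\hat X^1 \ge \hat X^2$ in this case.

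\emph{General case.} For $V^i$ satisfying only the hypotheses of the proposition, I would approximate by Lipschitz truncations such as $V^{i,k}(\ell) := V^i\bigl((\tfrac{1}{k} \vee \ell) \wedge k\bigr)$, which are Lipschitz and bounded on $\R_+$, agree with $V^i$ on $[\tfrac{1}{k},k]$, and preserve the ordering $V^{1,k} \ge V^{2,k}$. The Lipschitz case then yields $\hat X^{1,k} \ge \hat X^{2,k}$ for each $k$. The boundedness and equicontinuity estimates from the proof of Proposition \ref{prop:Xhat} apply uniformly in $k$, because $V^{i,k}$ coincides with $V^i$ on any fixed compact subset of $(0,\infty)$ once $k$ is large, so Arzela--Ascoli delivers a subsequential uniform limit $\tilde X^i$. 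Passing to the limit in the defining equation of $\mathcal{S}(V^{i,k},\xi,x)$ on intervals where $\tilde X^i$ is bounded away from $0$ shows $\tilde X^i \in \mathcal{S}(V^i,\xi,x)$, so $\tilde X^i \le \hat X^i$ by maximality.

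\emph{Main obstacle.} The crux of the argument is the opposite inequality $\tilde X^i \ge \hat X^i$, needed to identify the subsequential limit as the maximal solution and thus transfer $\hat X^{1,k} \ge \hat X^{2,k}$ to $\hat X^1 \ge \hat X^2$. I expect this to be achieved by constructing the truncations so that additionally $V^{i,k} \ge V^i$ pointwise on $(0,\infty)$ (which is compatible with $V^{1,k} \ge V^{2,k}$ since $V^1 \ge V^2$), in which case the Lipschitz-case comparison also yields $\hat X^{i,k} \ge \hat X^i$ and the required convergence follows by sandwiching. The delicate step is precisely this stability statement: the truncation removes the singularity of $V^i$ at $0$, and one must verify that regularising the drift does not create an extra reflection of $\hat X^{i,k}$ at $0$ that would survive in the limit as a strict inequality against $\hat X^i$.
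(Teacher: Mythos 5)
Your approach is genuinely different from the paper's. The paper avoids any global regularization of the drift: it sets $V^{1,\varepsilon}=V^1+\varepsilon$ and lifts the reflecting barrier from $0$ to $\varepsilon$, writing $\hat{X}^{1,\varepsilon}=\hat{X}(V^{1,\varepsilon}(\cdot+\varepsilon),\xi,x-\varepsilon)+\varepsilon$. Since the shifted drift is Lipschitz near the new barrier, Proposition \ref{prop:rsde} applies, and a first-crossing argument (the integral of a strictly positive integrand plus $dR^{1,\varepsilon}\ge 0$) gives $\hat{X}^{1,\varepsilon}>\hat{X}^2$ directly; letting $\varepsilon\downarrow 0$ and noting the limit lies in $\mathcal{S}(V^1,\xi,x)$ finishes the proof. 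This exploits precisely the asymmetry that only the \emph{larger} of the two drifts needs to be controlled near the boundary, and it requires Lipschitz continuity only locally where it is available. Your argument instead replaces both drifts by global Lipschitz truncations, establishes the comparison for the truncations (which is fine; Trotter--Kato works, although a direct Gronwall is simpler), and then tries to pass to the limit.

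The gap you flag in the ``Main obstacle'' paragraph is genuine and not merely a technicality. Your proposed fix is to arrange $V^{i,k}\ge V^i$ on $(0,\infty)$, so that a comparison gives $\hat{X}^{i,k}\ge\hat{X}^i$ and the sandwich closes. Two things go wrong. First, the hypotheses only require $V^i$ to be bounded above on $[1,\infty)$; near $0$, $V^i$ may blow up to $+\infty$ (e.g.\ $V^i(\ell)=1/\ell$), and then no Lipschitz function on $[0,\infty)$ can dominate $V^i$ near $0$. The truncation $V^{i,k}(\ell)=V^i((\tfrac{1}{k}\vee\ell)\wedge k)$ you write down does not satisfy $V^{i,k}\ge V^i$ either unless $V^i$ happens to be monotone near the endpoints. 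Second, even where $V^{i,k}\ge V^i$ can be arranged, the inequality $\hat{X}^{i,k}\ge\hat{X}^i$ is a comparison between a Lipschitz drift and a merely locally Lipschitz one, which is not the ``Lipschitz-case comparison'' you established in the first paragraph (where both drifts were assumed Lipschitz); it needs a separate one-sided Gronwall argument (valid because the \emph{larger} drift $V^{i,k}$ is Lipschitz), which you have not supplied. In short, your outline correctly identifies where the difficulty lies, but the proposed construction fails precisely in the regime where the paper's barrier-shifting idea is designed to work, and that regime is permitted by the stated hypotheses.
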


\begin{proof}
Fix $x \geq \varepsilon >0$, let $V^{1,\varepsilon} = V^1 + \varepsilon$ and $\hat{X}^{1,\varepsilon}$ be the corresponding solution reflected at $\varepsilon$ (i.e. $\hat{X}^{1,\varepsilon}= \hat{X}(x - \varepsilon, V^{1,\varepsilon}(\cdot+\varepsilon),\xi) + \varepsilon$). We first prove that $\hat{X}^{1,\varepsilon} > \hat{X}^{2}$. We proceed by contradiction, and let $t = \inf\{s >0, \hat{X}^{1,\varepsilon}(s) < \hat{X}^{2}(s)\}$. By continuity of $\hat{X}^{1,\varepsilon}, \hat{X}^2$ it holds that for some $\delta>0$, $V^{1,\varepsilon}(\hat{X}^{1,\varepsilon}(s)) > V^2(\hat{X}^2(s))$ for $s \in [t, t+ \delta)$. \revP{Note that $V^{1,\varepsilon}(\cdot+\varepsilon)$ is Lipschitz continuous in a neighbourhood of $0$, so that we can use Proposition \ref{prop:rsde} to obtain}, for $s \in [t, t+ \delta)$,
$$ \hat{X}^{1,\varepsilon}(s) - \hat{X}^{2}(s) = \int_t^s (V^{1,\varepsilon}(\hat{X}^{1,\varepsilon}(u)) - V^2(\hat{X}^2(u))) du + \int_t^s dR^{1,\varepsilon}(u) >0,$$
which is a contradiction.

By the same argument, we see that $\hat{X}^{1,\varepsilon}$ decreases as $\varepsilon \downarrow 0$, and as in the proof of Proposition \ref{prop:Xhat} we can show that the limit $\tilde{X}^1$ is in $\mathcal{S}(V,\xi,x)$. This yields $\hat{X}^2 \leq \tilde{X}^1 \leq \hat{X}^1$ which finishes the proof.
\end{proof}

We next analyze the boundary behavior of the solutions to \eqref{eq:sde}. The first result, Proposition \ref{prop:regular_noise} below, shows that if the signal $\xi$ is too regular compared to the singularity of $V$ at zero, then zero is absorbing or repelling depending on the sign of $V$. In contrast, in the case that $\xi$ is given by Brownian motion, Proposition \ref{prop:reflection_for_BM} below shows that zero may be either absorbing, reflecting or repelling, depending on the singularity of $V$ at zero.

\begin{proposition}\label{prop:regular_noise}
Assume that $\xi \in C^\alpha$, $\alpha \in (0,1]$.Then :

\begin{enumerate}
\item If $V$ is nonincreasing and satisfies $\limsup_{T\to 0} T^{-\alpha} \int_0^T V(s^\alpha) ds = +\infty$, then $$\forall t>0, \hat{X}(t)>0.$$
\item If $V$ is nondecreasing and satisfies $\limsup_{T\to 0} T^{-\alpha} \int_0^T V(s^\alpha) ds = -\infty$, then $$\hat{X}(t)=0 \Rightarrow \forall s \geq t, \hat{X}(s)=0.$$
\end{enumerate} 
 \end{proposition}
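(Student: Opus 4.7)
I focus on part (1); part (2) follows by a symmetric argument. The idea is a contradiction exploiting the Hölder regularity of $\xi$ against the singular growth of $V$ at $0$: on any small interval on which $\hat X$ is positive and ends at a zero of $\hat X$, the defining equation forces a positive integral $\int V(\hat X)\,du$ that is much larger than the increment of $\xi$ over the same interval, violating the hypothesis.

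\textbf{Local contradiction.} Assume $\tau>0$ with $\hat X(\tau)=0$ and $\hat X>0$ on some $(a,\tau)$. First, the hypothesis forces $V>0$ near $0$: $V$ nonincreasing with $V\le 0$ near $0$ would give $V\le 0$ on $(0,\infty)$, and then the $\limsup$ would be nonpositive. Apply the defining relation of $\mathcal{S}$ on $[a',s]$ for $a'\in (a,\tau)$, $s<\tau$, and let $s\uparrow \tau$:
\[
   \hat X(a') + \int_{a'}^{\tau} V(\hat X(u))\,du + \xi_{a',\tau} = 0.
\]
Since $V(\hat X)\ge 0$ on this interval, one immediately reads $\hat X(a')\le -\xi_{a',\tau}\le K(\tau-a')^\alpha$, where $K$ is the $C^\alpha$-seminorm of $\xi$. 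The same bound applied at every $u\in (a,\tau)$ in place of $a'$ gives $\hat X(u)\le K(\tau-u)^\alpha$, and monotonicity of $V$ then gives $V(\hat X(u))\ge V(K(\tau-u)^\alpha)$. Substituting back and changing variables $s=\tau-u$ yields
\[
   \int_0^h V(Ks^\alpha)\,ds \;\le\; 2K h^\alpha \qquad \text{for every } h\in(0,\tau-a].
\]
A further rescaling $u=K^{1/\alpha}s$ converts this into a uniform bound on $h^{-\alpha}\int_0^h V(s^\alpha)\,ds$ on a right-neighborhood of $0$, contradicting the $\limsup=+\infty$ hypothesis.

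\textbf{From local to global.} For initial value $x>0$, the first hitting time $\tau$ of $0$ is positive by continuity and $\hat X>0$ on $[0,\tau)$, so the local contradiction rules out $\tau<\infty$. For $x=0$, either $\hat X$ takes positive values at arbitrarily small times $t_n\downarrow 0$, and the previous case applied from each $t_n$ delivers $\hat X>0$ on $(0,\infty)$; or $\hat X\equiv 0$ on some initial interval $[0,\delta]$. The sticky case must be excluded by constructing a nontrivial $Y\in\mathcal{S}(V,\xi,0)$ with $Y>0$ on $(0,\delta]$, which by maximality of $\hat X$ would be a contradiction. Such $Y$ is built using that the hypothesis implies the Osgood-type integrability $\int_0^\eta du/V(u)<\infty$, so the ODE flow $\varphi^V(\cdot;0)$ leaves $0$ immediately; approximating reflected solutions started from $\varepsilon>0$ (each positive by the first case) and passing to a monotone limit in $\varepsilon$ produces the required $Y$.

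\textbf{Part (2)} is an almost verbatim dual: assuming $\hat X(\tau)=0$ and $\hat X(\tau+h_0)>0$, let $\tau':=\sup\{t\in[\tau,\tau+h_0]:\hat X(t)=0\}$. On $(\tau',\tau'+h]$, one has $\hat X>0$, and $V\le 0$ near $0$ (now forced by monotonicity and $\limsup=-\infty$) gives $\hat X(u)\le K(u-\tau')^\alpha$, hence $V(\hat X(u))\le V(K(u-\tau')^\alpha)$. Combined with positivity of $\hat X(\tau'+h)$, the integrated equation produces $h^{-\alpha}\int_0^h V(Ks^\alpha)\,ds\ge -K$ for all small $h$, contradicting the $\limsup=-\infty$ hypothesis. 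The main technical obstacle I anticipate is the sticky case $x=0$ in part (1): the local contradiction alone is insufficient there, and one must extract the Osgood-type "fast escape" property from the abstract integral condition in order to build a strict subsolution.
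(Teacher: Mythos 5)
Your \textbf{local contradiction} step correctly reproduces the H\"older-versus-singular-drift estimate for the case $\hat X(0)>0$ (which the paper simply cites from Marie, \cite[Prop.~2.2]{M15}), and your part~(2) argument is a valid direct dual of it (the paper instead deduces (2) from (1) by time-reversal). The genuine gap is in the \textbf{sticky case} $x=0$. Two problems: (i) the Osgood integrability $\int_0^\eta du/V(u)<\infty$ is \emph{automatic} once $V$ is positive and nonincreasing near $0$ (indeed $\int_0^\eta du/V(u)\le \eta/V(\eta)<\infty$); it holds whether or not the $\limsup$ condition does, so it cannot possibly encode the quantitative escape rate you need. In particular the ODE flow $\varphi^V(\cdot;0)$ always leaves $0$ immediately, even for drifts $V$ for which the reflected solution genuinely gets stuck at $0$ (e.g.\ $V(u)=u^{-1/2}$ with $\alpha=1/2$ and $\xi(t)=-\sqrt{t}$ violates the $\limsup$ condition, and one checks the reflected solution stays at $0$). (ii) The monotone limit $Y:=\lim_{\varepsilon\downarrow0}\hat X(V,\xi,\varepsilon)$ is, by Proposition~\ref{prop:compRsde} and the definition of $\hat X$, exactly $\hat X(V,\xi,0)=\hat X$; so ``$Y>0$ on $(0,\delta]$'' is precisely what you are trying to prove, and the argument is circular.

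The paper resolves the sticky case differently: fix $\delta>0$, truncate $V$ to a bounded Lipschitz $V^\delta\le V$ whose value $V^\delta(0^+)$ is chosen large enough relative to the slopes $\xi_{s,t}/(t-s)$ on $[0,\delta]$ (possible since the $\limsup$ hypothesis forces $V(0^+)=+\infty$), and observe via the Skorokhod decomposition of Proposition~\ref{prop:rsde} that $X^\delta:=\hat X(V^\delta,\xi,0)$ cannot be identically $0$ on $[0,\delta]$: if it were, the local time $R(t)=-V^\delta(0)t-\xi_{0,t}$ would have to be nondecreasing on $[0,\delta]$, which the choice of $V^\delta(0^+)$ rules out. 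Since $\hat X\ge X^\delta$ by Proposition~\ref{prop:compRsde}, this produces times $t_\delta\to 0$ with $\hat X(t_\delta)>0$; the already-established case $\hat X>0$ then propagates positivity forward from $t_\delta$. Your strategy could be repaired by replacing the Osgood/monotone-limit step with exactly this truncated-drift comparison; as written, though, it does not close.
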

 
 \begin{proof}
 (1) The case where $X(0)>0$ is treated in \cite[Prop. 2.2]{M15}, and we only need to prove the case where $X(0)=0$.
 
We fix $\delta>0$, and take $V^\delta\leq  V$ with $V^\delta$ bounded and Lipschitz on $\R_+$, and such that 
 \begin{equation}\label{eq:V'}
 V^\delta(0^+) > \inf_{\delta \geq t\geq s \geq 0} \frac{\xi_{s,t}}{(t-s)}.
 \end{equation}
 Let $X^\delta:= \hat{X}(V^\delta,\xi,x)$. Then by Proposition \ref{prop:compRsde} one has $\hat{X} \geq X^\delta$, and by Proposition \ref{prop:rsde}, for all $s \leq t$,
 $$X^\delta(t) \geq X^\delta(s) + \int_s^t V^\delta(X^\delta(s)) ds + \xi_{s,t}.$$
By \eqref{eq:V'}, $X^\delta$ is not identically $0$ on $[0,\delta]$, and neither is $\hat{X}$. Hence there is a sequence $t_\delta \to 0$ with $\hat{X}_{t_\delta} >0$, and by the case $\hat{X}_0>0$  we conclude that $\hat{X} >0$ on $(0,\infty)$. 

(2) is a consequence of (1) by time-reversal: If for some $s \leq t$, one has $\hat{X}(s)=0$ and $\hat{X} >0$ on $(s,t)$, then letting $Y(u) = \hat{X}(t-u)$, $Y$ satisfies the assumptions of (1) (with $V$ replaced by $-V$, $\xi$ by $\xi_{t-\cdot}$), and $Y(t-s) = 0$ which is a contradiction.
 \end{proof}
 
When $\xi$ is a standard Brownian motion, one has a complete classification of the boundary behavior at $0$.


\begin{proposition}\label{prop:reflection_for_BM}
Let $V$ be locally Lipschitz on $(0,+\infty)$, bounded from above on $[1,\infty)$, $x \in \R_+$, $B$ be a linear Brownian motion, and let $\hat{X} = \hat{X}(V,B,x)$. Define
$$I^+ =  \int_0^1\int_x^1 e^{ 2\int_x^y V(u) du} dy dx,  \;\;\;I^- = \int_0^1\int_x^1 e^{- 2\int_x^y V(u) du} dy dx.$$
Then one has the following four possible cases :
\begin{enumerate}
\item (Regular boundary) If $I^+< \infty, I^-<\infty $, then :
$$\forall t>0, \P(\hat{X}(t) =0)=0, \;\;\; \P(\exists s \leq t, \hat{X}(s) = 0) >0.$$
\item (Exit boundary) If $I^- = \infty, I^+ < \infty$ :
$$ \P(\exists s \leq t, \hat{X}(s) = 0) >0, \;\;\; \P(\exists s<t, \hat{X}(s)=0, \hat{X}(t)>0)=0.$$
\item (Entrance boundary)  If $I^+ = \infty, I^- < \infty$ :
$$\P(\forall t>0, \hat{X}(t)>0) =1, $$
\item (Natural boundary) If $I^+=I^-=\infty$ :
$$\mbox{ If $x > 0$, then } \P(\forall t>0, \hat{X}(t)>0) =1, \mbox{ if $x=0$ then } \P(\forall t, \hat{X}(t) = 0) = 1.$$
\end{enumerate}
\end{proposition}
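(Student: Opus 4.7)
The plan is to reduce to the classical Feller boundary classification applied to the one-dimensional diffusion $dX = V(X)\,dt + dB$ on $(0,\infty)$, whose scale density is $s'(y) = \exp(-2\int^y V(u)\,du)$ and whose speed density is $2\exp(2\int^y V(u)\,du)$. Interchanging the order of integration in $I^\pm$ and using the identity $\int^y V - \int^x V = \int_x^y V$, a short Fubini computation gives
\[
  I^+ \;=\; \tfrac{1}{2}\int_0^1 (s(y)-s(0))\,m(dy),\qquad I^- \;=\; \tfrac{1}{2}\int_0^1 s'(y)\,M((0,y])\,dy,
\]
with the convention that both sides equal $+\infty$ whenever $s(0)=-\infty$ or $M((0,y])=+\infty$. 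These are exactly the classical Feller accessibility and entrance integrals: $I^+<\infty$ iff $0$ is accessible from the interior, and $I^-<\infty$ iff $0$ is an entrance. The four cases of the proposition then match the regular/exit/entrance/natural dichotomy of Feller.

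For $x>0$ the maximal solution $\hat{X}$ coincides, on $[0,\tau_0)$ with $\tau_0 = \inf\{t:\hat{X}(t)=0\}$, with the unique strong solution of the ordinary SDE on $(0,\infty)$, since $V$ is locally Lipschitz there. Feller's accessibility test then yields $\P(\tau_0<\infty)>0$ iff $I^+<\infty$, which directly establishes the hitting statements in (1) and (2) and the ``$\forall t>0,\ \hat{X}(t)>0$'' assertions in (3) and (4) when $x>0$. The behavior upon reaching $0$ I would handle by combining the approximation scheme of Section \ref{sec:rSDE} with Feller's entrance theorem: truncate $V$ to a bounded Lipschitz drift $V^k$ on $[0,\infty)$, so that both $I^{\pm,k}$ are finite (case (1) for the truncated equation), and use Propositions \ref{prop:rsde}, \ref{prop:compRsde} to identify $\hat{X}$ as a monotone limit of the $\hat{X}^k$'s. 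When $I^-<\infty$, Feller's entrance theorem produces continuous solutions starting at $0$ that enter the interior immediately, and the comparison principle forces $\hat{X}$ to select such a solution, yielding the restart/entrance behavior in (1) and (3).

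The delicate step is the non-entrance cases (2) and (4), where one must show that $\hat{X}$ is absorbed at $0$ after $\tau_0$ (and is identically $0$ when $x=0$ in (4)). A priori the maximality built into the definition of $\hat{X}$ could allow spurious excursions powered by Brownian fluctuations, but Feller's uniqueness at a non-entrance boundary says that the only continuous SDE solution starting at $0$ is $X \equiv 0$. To convert this into a pathwise statement about $\hat{X}$ I would pass along the monotone limit $\hat{X}^k \searrow \hat{X}$ from above, noting that $I^{-,k} \uparrow I^- = +\infty$ by monotone convergence; the expected entrance time of $\hat{X}^k$ above any level $\varepsilon>0$ then diverges with $k$, forcing $\hat{X}$ to be pinned to $0$ past $\tau_0$. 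Combined with the accessibility results of the previous step this completes all four cases.
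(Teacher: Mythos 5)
Your proposal takes essentially the same route as the paper: reduce to Feller's boundary classification for the one-dimensional diffusion $dX=V(X)\,dt+dB$ and match $I^\pm$ with the scale/speed integrals (the paper simply cites \cite[sec.~15.6]{KT81} and spells out only case (1)). Your Fubini identification, $I^+<\infty$ iff $0$ is accessible and $I^-<\infty$ iff $0$ is an entrance, is correct; the paper's own proof writes ``$I^+ = \int_0^1 dm(x)\int_x^1 ds(y)$'' which has the roles of $dm$ and $ds$ swapped, so your version is actually the accurate one. Your handling of the regular case via a dominated instantaneously-reflecting element of $\mathcal{S}(V,B,x)$ and the comparison $X\le\hat X$ is in the same spirit as the paper's argument for (1).

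The one flawed step is the final paragraph for the non-entrance cases. Divergence of the \emph{expected} time for $\hat X^k$ to exceed a level $\varepsilon$ does not force $\hat X$ to be pinned at $0$ past $\tau_0$: a random variable with infinite mean can still be small with high probability, and $\hat X\le\hat X^k$ together with $\mathbb{E}[\text{entrance time of }\hat X^k]\to\infty$ gives no almost-sure pathwise conclusion. The clean exit is the fact you name just before — at a non-entrance boundary the only continuous solution in $\mathcal{S}(V,B,0)$ is $X\equiv 0$; since $\hat X$ itself lies in $\mathcal{S}(V,B,x)$ (Proposition \ref{prop:Xhat}), applying this directly on the random interval $[\tau_0,T]$ yields the absorption without the truncation/expected-time detour. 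With that repair, your proposal is complete and somewhat more detailed than the paper's own proof.
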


\begin{proof}
This is mostly standard (cf.\ e.g.\ \cite[sec. 15.6]{KT81}), noting that $I^+ = \int_0^1 dm(x) \int_x^1 ds(y)$, $I^- = \int_0^1 ds(x) \int_x^1 dm(y)$ where $s$ is the scale function and $m$ is the speed measure associated to \eqref{eq:sde}.

In case (1) the diffusion admits several possible boundary behaviors (so that $\mathcal{S}(V,\xi,x)$ is in general infinite), but it is known that there exists a process $X \in \mathcal{S}(V,\xi,x)$ which is instantaneously reflected i.e. such that $\P(X(t) = 0)=0$ for all $t>0$. Since $\hat{X} \geq X$ this implies that $\P(\hat{X}(t) = 0)=0$.
\end{proof}

\subsection{A Trotter-Kato formula}\label{sec:trotter}

In this section we establish a Trotter-Kato formula for viscosity solutions to \eqref{eq:main-SPDE}. 

From Theorem \ref{thm:app_wp} recall that for $u_0 \in BUC(\R^N)$, $\xi,\z \in C([0,T];\R)$ we have 
\begin{equation} \label{eqn:EstLS}
 \left\|S^\xi(u_0) - S^\zeta(u_0)\right\|_\infty \leq \Phi \left( \left\|\xi_{0,\cdot} - \zeta_{0,\cdot}\right\|_\infty\right),
 \end{equation}
for some function $\Phi$ as in Theorem \ref{thm:app_wp}.

We now show that, as a consequence of this estimate, it is possible to define $S^\xi(u_0)$ for paths $\xi$ admitting jumps, in such a way that the estimate \eqref{eqn:EstLS} remains true.

To this end, let $\xi$ be a piecewise continuous path on $[0,T]$ with jumps $\D\xi(t_i):=\xi(t_i+)-\xi(t_i-)$ for $i=1,\dots,m-1$ along a partition $(t_i)_{0\leq i \leq m}$ of $[0,T]$. We then define $u = S^\xi(u_0)$ as the solution to
\begin{align*}
u(0,\cdot) &= u_0(\cdot) , \;\;\; \\
u(t) &= \left(S^{\xi_{|[t_i,t_{i+1}]}} u(t_{i})\right)(t) \mbox{ on } [t_i, t_{i+1}), \forall 0 \leq i \leq m-1,\\
u(t_{i+1}) &= S_{H}(\Delta \xi(t_{i+1})) (u(t_{i+1}-)), \;\;0 \leq i \leq m-2.
\end{align*}

\revP{
This definition is in the spirit of Marcus' canonical solutions to SDE driven by jump processes \cite{Marcus81}, and consists in replacing each jump $\Delta \xi$ by a "fictitious time" during which the equation $\partial_t +H(Du)=0$ is solved. This interpretation is actually used in the proof of the following proposition.
}

\begin{proposition}\label{prop:ext_jumps}
Let $u_0 \in BUC(\R^N)$ and $\xi$, $\zeta$ be piecewise-continuous paths. Then, \eqref{eqn:EstLS} holds.
\end{proposition}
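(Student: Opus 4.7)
The plan is to reduce to the known continuity estimate \eqref{eqn:EstLS} for continuous paths via a Marcus-type approximation in which each jump of $\xi$ and $\zeta$ is stretched into a linear interpolation over a vanishingly short ``fictitious time'' interval of length $\epsilon$. First, I would take the union of the jump times of $\xi$ and $\zeta$ so that both piecewise-continuous paths have jumps at a common set $0<t_1<\cdots<t_{m-1}<T$ (some possibly of size zero). Next I would define continuous paths $\tilde{\xi}^\epsilon,\tilde{\zeta}^\epsilon$ on the extended interval $[0,T+(m-1)\epsilon]$ by inserting at each $t_i$ a linear interpolation from the left limit to the right limit over a time window of length $\epsilon$, and translating the intervening continuous pieces. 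Since the inserted intervals align for the two paths and the difference of two affine functions attains its extremum at the endpoints of each insertion (which agree with the one-sided limits of $\xi,\zeta$ at $t_i\pm$), one has
\[
\|\tilde{\xi}^\epsilon-\tilde{\zeta}^\epsilon\|_{L^\infty([0,T+(m-1)\epsilon])}\le \|\xi-\zeta\|_{L^\infty([0,T])},
\]
and \eqref{eqn:EstLS} applied on the extended time interval gives
\[
\|S^{\tilde{\xi}^\epsilon}(u_0)(T+(m-1)\epsilon,\cdot)-S^{\tilde{\zeta}^\epsilon}(u_0)(T+(m-1)\epsilon,\cdot)\|_\infty \le \Phi(\|\xi-\zeta\|_\infty).
\]

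The heart of the argument is to identify, in the limit $\epsilon\to 0$, $S^{\tilde{\xi}^\epsilon}(u_0)(T+(m-1)\epsilon,\cdot)$ with $S^\xi(u_0)(T,\cdot)$, and analogously for $\zeta$. On each translated continuous piece the path $\tilde\xi^\epsilon$ coincides with $\xi$, so by uniqueness of viscosity solutions the continuous-path evolutions match exactly once their initial data agree. On the $i$-th interpolation interval $[s,s+\epsilon]$ driven by a linear path of slope $c_\epsilon=\Delta\xi(t_i)/\epsilon$, the time change $\tau=c_\epsilon(t-s)$ (combined with the convention $S_H(-\delta):=S_{-H}(\delta)$ to handle the sign of $\Delta\xi(t_i)$) recasts the equation as
\[
\partial_\tau \bar v + H(D\bar v) = \tfrac{1}{c_\epsilon}\,F(\cdot,\bar v,D\bar v,D^2\bar v),\qquad \tau\in[0,|\Delta\xi(t_i)|],
\]
with initial datum the value of $v^\epsilon$ just before the insertion. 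Since $1/c_\epsilon\to 0$ and $F$ is bounded and Lipschitz in $r$ by Assumption \ref{asn:F}, standard viscosity stability under a uniformly vanishing perturbation of the right-hand side yields that $\bar v(|\Delta\xi(t_i)|)$ converges uniformly to $S_H(\Delta\xi(t_i))$ applied to $\bar v(0)$.

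Chaining the $m-1$ continuous pieces with the $m-1$ insertion pieces, the per-jump error of size $O(\epsilon)$ is propagated through subsequent continuous pieces by the Lipschitz-in-initial-data estimate $e^{LT}$ for the SPDE and through the jump operators $S_H(\delta)$ by their $L^\infty$-contractivity (the comparison principle for first-order Hamilton-Jacobi equations). The cumulative error at the terminal time is therefore bounded by $C(T,L,F)\,m\epsilon$, which vanishes as $\epsilon\to 0$, and passing to the limit in the displayed bound yields the proposition. The main obstacle will be making this quantitative stability on the short stretched intervals fully rigorous, namely a careful viscosity comparison argument that tracks the $1/c_\epsilon$ perturbation through a Gronwall-type chain across all jumps.
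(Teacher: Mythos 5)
Your overall strategy mirrors the paper's: stretch the time axis to replace each jump by a linear interpolation over a short window, apply the continuous-path estimate \eqref{eqn:EstLS} on the extended interval, and take the window length to zero. The crucial difference is that you keep $F$ active during the insertion windows and rely on the rescaled drift $\frac{1}{c_\epsilon}F$ vanishing as $\epsilon\to 0$, whereas the paper defines $\tilde{F}^\delta(t,\cdot)=F(s^\delta(t),\cdot)\psi^\delta(t)$ with $\psi^\delta\equiv 0$ on the insertion intervals. Because of this, in the paper the stretched continuous-path solution is an \emph{exact} time-reparametrization of $S^\xi(u_0)$ for every $\delta>0$ (on insertion intervals the equation reduces to $\partial_t\tilde{u}+H(D\tilde u)\,\dot{\tilde\xi}=0$, which is precisely the jump operator $S_H(\Delta\xi(t_i))$; on the complement, the time change $s=s^\delta(t)$ recovers the original equation exactly). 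No convergence of the solutions in $\delta$ is required; the only $\delta$-dependence is through $\tilde\Phi$, which tends to $\Phi$ since $\tilde F$ satisfies Assumption \ref{asn:F} (2), (3), (5) with the same constants as $F$ and $\tilde T\to T$.

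Your route introduces a genuine gap. You assert an $O(\epsilon)$ per-jump error and propagate it with a Gronwall chain, but this quantitative rate is not substantiated. Assumption \ref{asn:F} only bounds $F$ at $(r,p,X)=(0,0,0)$; for a general second-order degenerate elliptic $F$, $F(t,x,r,p,X)$ is unbounded in $X$ and there are no a priori $C^{1,1}$ estimates available for the solutions on the insertion windows, so ``$F$ is bounded'' cannot be invoked. Viscosity stability under locally uniform convergence of the nonlinearity (half-relaxed limits) gives only \emph{qualitative} convergence without a rate, and turning this into a quantitative chain estimate at this level of generality would require machinery (e.g., Krylov-type shaking-of-coefficients arguments) far beyond what you sketch. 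A secondary issue is that on the translated continuous pieces you do not reparametrize the explicit $t$-dependence of $F$; without the $s^\delta$ composition inside $\tilde F^\delta$, the translated evolution does not literally coincide with the original one. The key idea you are missing is the multiplier $\psi^\delta$, which turns off $F$ on the insertion windows and converts the entire limiting argument into an algebraic identity.
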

\begin{proof}
The idea is to change the parametrization of $\xi$, $\zeta$ in order to replace the piecewise-continuous paths by continuous paths. 

We replace $[0,T]$ by $[0,\tilde{T}]$, obtained from $[0,T]$ by adding an interval for each jump of $\xi$ and $\zeta$. For instance, say that $\xi$ and $\zeta$ have jumps at the points $(t_i)_{i=1,\ldots,m-1}$. We then \revP{fix $\delta>0$ , let $\tilde{T} = T + 2(m-1)\delta$, and let 
$$I = \cup_{i=1}^{m-1} [t_i + (2i-1) \delta, t_{i} + 2i\delta),\quad J =[0,\tilde{T}]\setminus I.$$
We further fix a continuous function $\psi^\delta$ satisfying
\[ 0 \leq \psi^\delta \leq 1, \]
\[ \psi^\delta = 0 \mbox{ on } I, \;\; \psi^\delta>0 \mbox{ on the interior of } J,\]
\[ \int_0^{t_i + (2i-1)\delta} \psi^\delta(v) dv = t_i,\;\; \forall i \in \{1,\ldots,m\}.\]

Then $s^\delta(t) := \int_0^t \psi^\delta(u)du$ defines a bijection from $J$ to $[0,T]$.

We define $\tilde{\xi}$ such that $\tilde{\xi} = \xi \circ s^\delta$ on $J$, $\tilde{\xi}$ is continuous on $[0,\tilde{T}]$, and $\tilde{\xi}$ is affine linear on each interval of $I$ and analogously for $\tilde{\zeta}$.
We further let
$$\tilde{F}^\delta(t,\cdot) = F(s^\delta(t), \cdot)\psi^\delta(t),\quad t \in [0,\tilde{T}].$$

Let $\td u^{\td\xi}$ be the solution to
\begin{align*}
  d \td u &= \tilde{F}(t,x,\td u,D\td u,D^2\td u)dt - H(D\td u) \circ d{\tilde{\xi}}(t) \\
  \td u(0)&=u_0, 
\end{align*}
and define $\td u^{\td\zeta}$ analogously. Then 
  $$S^\xi(u_0)(t,\cdot) = \td u^{\td\xi}((s^{\delta})^{-1}(t),\cdot),\quad S^\zeta(u_0)(t,\cdot) = \td u^{\td \zeta}((s^{\delta})^{-1}(t),\cdot),$$
so that
  $$ \left\|S^\xi(u_0) - S^\zeta(u_0)\right\|_\infty \leq  \left\|\td u^{\td\xi} - \td u^{\td\zeta}\right\|_\infty 
  \leq \tilde{\Phi} \left( \left\|\tilde{\xi}_{0,\cdot} - \tilde{\zeta}_{0,\cdot}\right\|_\infty\right)=\tilde{\Phi} \left( \left\|\xi_{0,\cdot} - \zeta_{0,\cdot}\right\|_\infty\right),$$
  where $\tilde{\Phi}$ is given by Theorem \ref{thm:app_wp} applied to $\tilde{F}, \tilde{T}$. Now since $\tilde{F}$ satisfies Assumption \ref{asn:F} (2)-(3)-(5) with the same quantities as $F$, and since $\tilde{T}$ may be taken as close to $T$ as one wishes \revP{by letting $\delta \to 0$}, it follows that the estimate above also holds with $\tilde{\Phi}$ replaced by $\Phi$.
}

\end{proof}

\begin{corollary}[Trotter-Kato formula]\label{cor:trotter-kato}
Let $\xi \in C([0,T])$, $u_0 \in BUC(\R^N)$ and let $u$ be the corresponding viscosity solution to \eqref{eq:main-SPDE}. Further let $(t_i^n)$ be a sequence of partitions of $[0,T]$ with step-size going to $0$. Define $u^n$ by
$$u^n(t,\cdot) := \left(S_F(t_j^n,t) \circ S_{H}(\xi_{t_{j-1}^n,t_j^n}) \circ  S_F(t_{j-1}^n,t_j^n ) \circ \cdots \circ S_{H}(\xi_{0,t^n_1}) \circ S_F(0,t_1^n)\right) (u_0),$$
for $t \in [t_j^n, t_{j+1}^n)$. Then 
  $$\|u^n - u\|_{C([0,T] \times \R^N)}\to0\quad\text{for }n\to\infty.$$
\end{corollary}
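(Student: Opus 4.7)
The plan is to realize each splitting approximation $u^n$ as $S^{\xi^n}(u_0)$ for a suitably chosen piecewise constant càdlàg path $\xi^n$, and then to invoke the stability estimate of Proposition \ref{prop:ext_jumps}, which is tailored precisely for comparing solutions driven by piecewise-continuous paths. The uniform continuity of $\xi$ on $[0,T]$ will then force $\xi^n \to \xi$ uniformly, and the modulus $\Phi$ from Theorem \ref{thm:app_wp} will transfer this convergence to the level of solutions.

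First I would define $\xi^n$ to be the right-continuous step path on $[0,T]$ given by $\xi^n(t) := \xi(t_{j-1}^n)$ for $t \in [t_{j-1}^n, t_j^n)$ and $\xi^n(T) := \xi(T)$. Its jump set is contained in $\{t_j^n\}_{j\ge1}$, with $\Delta \xi^n(t_j^n) = \xi(t_j^n) - \xi(t_{j-1}^n) = \xi_{t_{j-1}^n,\,t_j^n}$. Between two consecutive jump times $\xi^n$ is constant, so that on each interval $[t_{j-1}^n, t_j^n)$ the equation \eqref{eq:main-SPDE} driven by $\xi^n$ reduces to $\partial_t v = F(t,x,v,Dv,D^2v)$, whose viscosity semigroup is $S_F$. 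Unwinding the jump-inclusive definition of $S^{\xi^n}$ from the paragraph preceding Proposition \ref{prop:ext_jumps} across the partition, one reads off
\[
S^{\xi^n}(u_0)(t,\cdot) = \bigl(S_F(t_j^n,t) \circ S_H(\xi_{t_{j-1}^n,t_j^n}) \circ S_F(t_{j-1}^n,t_j^n) \circ \cdots \circ S_H(\xi_{0,t_1^n}) \circ S_F(0,t_1^n)\bigr)(u_0)
\]
for $t \in [t_j^n, t_{j+1}^n)$, which is precisely the definition of $u^n$.

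Second, I would apply Proposition \ref{prop:ext_jumps} to the pair $(\xi^n,\xi)$, the latter being piecewise continuous trivially (no jumps), to obtain
\[
\|u^n - u\|_{C([0,T]\times\R^N)} = \|S^{\xi^n}(u_0) - S^\xi(u_0)\|_\infty \leq \Phi\bigl(\|\xi^n_{0,\cdot} - \xi_{0,\cdot}\|_\infty\bigr).
\]
Since $\xi \in C([0,T])$ is uniformly continuous with some modulus $\omega_\xi$, the construction of $\xi^n$ yields $\|\xi^n - \xi\|_{L^\infty([0,T])} \leq \omega_\xi(\pi^n)$, where $\pi^n \to 0$. Noting that $\xi^n(0-) = \xi(0) = \xi(0-)$ so that $\|\xi^n_{0,\cdot}-\xi_{0,\cdot}\|_\infty = \|\xi^n - \xi\|_\infty$, and recalling from Theorem \ref{thm:app_wp} that $\Phi(r) \to 0$ as $r \to 0$, I would conclude $\|u^n - u\|_\infty \to 0$.

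No real obstacle is expected: the substantive analytic work—namely the stability of viscosity solutions under perturbation of the driving signal, extended to accommodate Marcus-type jumps—has already been carried out in Proposition \ref{prop:ext_jumps}. The only mildly technical point is the unwinding identifying $u^n$ with $S^{\xi^n}(u_0)$, which is purely definitional.
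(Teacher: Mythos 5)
Your proof is correct and follows the same route as the paper: identify $u^n = S^{\xi^n}(u_0)$ with $\xi^n$ the piecewise constant path equal to $\xi(t_i^n)$ on $[t_i^n,t_{i+1}^n)$, then apply Proposition \ref{prop:ext_jumps} together with $\|\xi^n-\xi\|_\infty\to0$. You simply make explicit the definitional unwinding and the modulus-of-continuity step that the paper compresses into two sentences.
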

\begin{proof}
We have $u^n = S^{\xi^n}(u_0)$, where $\xi^n$ is the piecewise constant path equal to $\xi_{t_i^n}$ on $[t_i^n, t_{i+1}^n)$. The claim now follows from Proposition \ref{prop:ext_jumps}.
\end{proof}

\subsection{Proof of Theorem \ref{thm:main}}\label{sec:main_thm_proof}
Let $t_i^n = \frac{ti}{n}$ and 
  $$ u^n(t) := S_{H}(\xi_{{t_{n-1}^n,t_n^n}}) \circ S_F(\frac{t}{n}) \circ \cdots \circ S_{H}(\xi_{t_0^n,t_1^n}) \circ S_F(\frac{t}{n}) u^0.$$
By Corollary \ref{cor:trotter-kato}, one has
  $$ u(t,\cdot) = \lim_{n \to \infty} u^n(t, \cdot).$$
Proposition \ref{prop:InfSup} combined with Assumption \ref{asn:odeF} implies 
$$D^2u^n(t,\cdot) \leq \frac{Id}{L^n(t)},$$
where $L^n$ is defined by the induction 
$$L^n(0) = \ell_0, \;\; L^n(t^n_i) = \left( \vp^{V_F}(\frac{t}{n})(L^n(t^n_{i-1})) - \xi_{t_{i+1}^n,t_{i}^n}\right)_+.$$
Now If $V_F$ admits a Lipschitz extension to $[0,\infty)$, then as $n \to \infty$  $L^n$ converges to $L$ by Proposition \ref{prop:TKsde} and we are done.

Let now $V$ be only locally Lipschitz continuous. First assume that $L>\varepsilon>0$ on $[0,t]$ for some $\ve>0$. Let $\tilde{V}$ be Lipschitz continuous on $[0,\infty)$ with $\tilde V=V$ on $(\ve,+\infty)$ and let $\tilde{L}$, $\tilde{L}^n$ be the solutions to \eqref{eq:reflected}, \eqref{eq:discrete_SDE} with $V$ replaced by $\tilde V$ respectively. Then $L=\tilde{L}$ and $\tilde{L}=\lim_n \tilde{L}^n$ by Proposition \ref{prop:TKsde}. Thus, $\tilde{L}^n > \ve$ for $n$ large enough, which implies $L^n = \tilde{L}^n$ and $\lim_n L^n = L$.

Now assume that $L(s)=0$ for some $s\in[0,t]$ and $L(t)>0$ (otherwise there is nothing to prove). Hence, for all $\varepsilon>0$ (small enough), there exists an $s_\varepsilon$ $\in$ $(0,t)$ with $L_{s_\varepsilon} = \varepsilon$, and $L\geq \varepsilon$ on $[s_\varepsilon,t]$. Let now $u^\varepsilon$ be the solution to \eqref{eq:main-SPDE} on $(s_\varepsilon,t]\times \R^n$ with $u^\varepsilon(s_\varepsilon,\cdot)=S_H(-\varepsilon)u(s_\varepsilon,\cdot)$. By Proposition \ref{prop:InfSup}, $D^2u^\varepsilon(s_\varepsilon,\cdot) \leq \varepsilon Id$, and since $L >0$ on $[s_\varepsilon,t)$, we may apply the Trotter-Kato formula as in the previous case to conclude that $D^2 u^\varepsilon(t,\cdot) \leq \frac{Id}{L(t)}$. Finally, note that $u^\varepsilon(t)$ is the solution to \eqref{eq:main-SPDE} driven by $\xi^\varepsilon = \xi + \varepsilon 1_{[s_\varepsilon,t]}$. Since $\xi^\ve \to \xi$ uniformly as $\varepsilon \to 0$, we conclude the proof by Proposition \ref{prop:ext_jumps}. 

\section{Semiconvexity preservation}\label{sec:semiconv}

In this section we provide sufficient conditions on $F$ to satisfy Assumption \ref{asn:odeF}. From \cite{LM06} we recall

\begin{proposition} \label{thm:LM}
Let $F = F(t,x,p,A) \in C([0,T]\times\R^N\times\R^N\times S^N)$ be degenerate elliptic and such that, for all $t \geq 0$, $x, p \in \R^N$, $q \neq 0 \in \R^N$,
\begin{align} \label{eq:LMcond}
  (y,A) \mapsto F(t,x + y,p , B) \text{ is convex on } (\R q)^{\perp} \times X_{q}, 
\end{align}
where $X_q = \left\{A \in S^N, A q = 0, A >0 \mbox{ on } (\R q)^{\perp}\right\} $, $B q = 0 , \; B = A^{-1} \mbox{ on }  (\R q)^{\perp}$.

Let $u$ be coercive in $x$ i.e.
$$ \lim_{|x|\to \infty} \inf_{t \in [0,T]} \frac{u(t,x)}{|x|} = +\infty$$
and a classical supersolution on $[0,T] \times \R^N$ to 
\begin{equation} \label{eq:F}
\partial_t u = F(t,x,Du,D^2 u),
\end{equation}
 and let 
$$u_{**}(t,x) := \inf \left\{ \sum_{i=1}^m \lambda_i u(t,x_i), \; \; 0\leq \lambda_i\leq 1, \sum_{i=1}^m \lambda_i =1,  \sum_{i=1}^m \lambda_i x_i =x\right\}$$ be the partial convex envelope of $u$. Then $u_{**}$ is a viscosity supersolution to \eqref{eq:F}.
\end{proposition}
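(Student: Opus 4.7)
The plan is to verify the viscosity supersolution property at an arbitrary contact point. Fix $\varphi \in C^{1,2}$ so that $u_{**} - \varphi$ attains a local minimum at some $(t_0, x_0)$, set $p_0 := D\varphi(t_0, x_0)$, and aim for $\partial_t \varphi(t_0, x_0) \ge F(t_0, x_0, p_0, D^2\varphi(t_0, x_0))$. By the definition of the partial convex envelope, together with coercivity (which prevents any minimizing sequence from escaping to infinity), Choquet/Carath\'eodory produces a representation $u_{**}(t_0, x_0) = \sum_{i=1}^k \lambda_i u(t_0, x_i)$ with $k \le N+1$, $\lambda_i > 0$, $\sum_i \lambda_i = 1$ and $\sum_i \lambda_i x_i = x_0$. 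The affine function $\ell(x) := u_{**}(t_0, x_0) + p_0 \cdot (x - x_0)$ lies below $u(t_0, \cdot)$ with equality at each $x_i$, so smoothness of $u$ forces $Du(t_0, x_i) = p_0$; the classical supersolution property then yields $\partial_t u(t_0, x_i) \ge F(t_0, x_i, p_0, D^2 u(t_0, x_i))$ for every $i$. Differentiating the inequality $\varphi(t, x_0) \le \sum_i \lambda_i u(t, x_i)$ at $t = t_0$, where it holds with equality, gives $\partial_t \varphi(t_0, x_0) = \sum_i \lambda_i \partial_t u(t_0, x_i)$.

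The heart of the argument is to identify $D^2\varphi(t_0, x_0)$ in terms of the $D^2 u(t_0, x_i)$. Let $q$ be a unit normal to the affine hull of $\{x_i\}$: along that hull $u_{**}(t_0, \cdot)$ is affine, while on $(\R q)^\perp$ a standard Legendre-duality computation (minimize $\tfrac12 \sum_i \lambda_i \langle M_i v_i, v_i\rangle$ subject to $\sum_i \lambda_i v_i = v$) identifies
$$D^2 u_{**}(t_0, x_0)\big|_{(\R q)^\perp} = \Bigl(\sum_i \lambda_i M_i^{-1}\Bigr)^{-1}, \qquad M_i := D^2 u(t_0, x_i)\big|_{(\R q)^\perp}.$$
This is precisely the parallel-sum/harmonic-mean structure that motivates the pseudo-inverse $B$ in \eqref{eq:LMcond}: setting $A_i \in X_q$ by $A_i q = 0$ and $A_i = M_i^{-1}$ on $(\R q)^\perp$, one has $\sum_i \lambda_i A_i \in X_q$, and the corresponding $B_{**}$ (with $B_{**} q = 0$ and $B_{**} = (\sum_i \lambda_i A_i)^{-1}$ on $(\R q)^\perp$) coincides with $D^2 u_{**}(t_0, x_0)$ on $(\R q)^\perp$. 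The touching-below condition combined with degenerate ellipticity (together with the fact that $B_{**} q = 0$ while $\langle D^2 u_{**} q, q\rangle \ge 0$) then permits the replacement of $D^2\varphi$ by $B_{**}$ inside $F$ at no cost. Setting $y_i := x_i - x_0 \in (\R q)^\perp$, so that $\sum_i \lambda_i y_i = 0$, the convexity hypothesis \eqref{eq:LMcond} applied to the points $(y_i, A_i)$ yields
$$\sum_i \lambda_i F(t_0, x_0 + y_i, p_0, M_i) \ge F\bigl(t_0, x_0, p_0, B_{**}\bigr).$$
Combining this with the $\lambda_i$-weighted classical supersolution inequalities and the time-derivative identity concludes the proof.

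The main obstacle is the Jensen-type identification of $D^2 u_{**}$ as a parallel sum of the $D^2 u(t_0, x_i)$ in directions transverse to the contact set, and its precise matching with the pseudo-inverse appearing in \eqref{eq:LMcond}: that matching is exactly what dictates the unusual form of the hypothesis. Two secondary technicalities must be absorbed: the $M_i$ may degenerate on $(\R q)^\perp$, which is resolved by the perturbation $u \rightsquigarrow u + \tfrac{\varepsilon}{2}|x|^2$ followed by $\varepsilon \downarrow 0$; and the case $k=1$ is trivial, since there $u_{**}(t_0, \cdot) = u(t_0, \cdot)$ locally at $x_0$ and the classical inequality transfers immediately.
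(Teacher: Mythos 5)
The structure of your argument—Choquet representation, gradient matching at the contact points, time-derivative identity, Jensen-type Hessian bound, and the convexity hypothesis \eqref{eq:LMcond}—does follow the paper's route. But there are two genuine errors in the heart of the argument, and they break the proof.

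\textbf{(a) The identity for $D^2 u_{**}$ is false.} The Legendre/Jensen computation you invoke only yields a \emph{one-sided} bound: since $u_{**}(x_0+v) \le \sum_i \lambda_i u(x_i+v_i)$ for any choice of $v_i$ with $\sum_i \lambda_i v_i = v$, minimizing the right-hand side over $v_i$ gives an \emph{upper} bound on $u_{**}$, hence $D^2\varphi(t_0,x_0) \le \bigl(\sum_i \lambda_i A_i^{-1}\bigr)^{-1}$ for subjet matrices, \emph{not} an equality. A concrete counterexample to your claimed identity: take $N=2$, $u(x,y)=(x^2-1)^2+y^2$, $x_0=(0,0)$, $x_1=(-1,0)$, $x_2=(1,0)$, $\lambda_i=1/2$, $q=(0,1)$ (your unit normal). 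Then $(\R q)^\perp$ is the $x$-axis $=V$, the affine-hull direction. Along this axis $u_{**}$ is identically zero on $[-1,1]$, so $D^2 u_{**}(0,0)|_{(\R q)^\perp}=0$; yet $M_i = u_{xx}(\pm1,0)=8$ and the parallel sum $\bigl(\tfrac12\cdot\tfrac18+\tfrac12\cdot\tfrac18\bigr)^{-1}=8\ne 0$.

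\textbf{(b) The degeneracy direction of your $B_{**}$ is in the wrong place, so the ``replacement at no cost'' fails.} You define $B_{**}q=0$ with $q$ \emph{normal} to $V$; but the convex envelope is flat along $V$, not along $\R q$. Consequently $\langle D^2\varphi(t_0,x_0)\,q,q\rangle$ can be strictly positive (in the example above it is $\le 2$ but not $\le 0$), so $D^2\varphi \le B_{**}$ fails in the $q$-direction and ellipticity does \emph{not} give $F(D^2\varphi)\le F(B_{**})$. In fact the observation you use in support—``$B_{**}q=0$ while $\langle D^2 u_{**} q,q\rangle\ge 0$''—argues for the wrong inequality. The paper's argument builds $B$ with $B|_V=0$ (matching the flat directions, where the subjet satisfies $A|_V\le 0$) and $B$ equal to the parallel sum on $V^\perp$; it is this $B$ that dominates the subjet matrix. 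It then still needs the intermediate step $\tilde A_i \le A_i$ (using $A_i\ge 0$) before applying ellipticity again to reach $F(t,x_i,p,A_i)$, a step you omit. You also do not address the reduction from single directions $q$ to general subspaces $V$ that the paper records as an equivalence of \eqref{eq:LMcond} with the condition for $X_V$; this is necessary since $V$ is generically of dimension strictly less than $N-1$.
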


\begin{proof}
For the reader's convenience we provide a proof. First note that by continuity of $F$, it is straightforward to see that the assumption \eqref{eq:LMcond} is equivalent to the fact that for any subspace $V \subset \R^n$ which is not reduced to $\{0\}$, the map
\begin{align} \label{eq:LMcondV}
  (y,A) \mapsto F(t,x + y,p , B) \text{ is convex on } V^{\perp} \times X_V, 
\end{align}
where $X_V = \left\{A \in S^N, A_{|V} = 0, A >0 \mbox{ on } V^{\perp}\right\} $, $B_{|V} = 0 , \; B = A^{-1} \mbox{ on }  V^{\perp}$.

Now consider $(t,x) \in (0,T]\times \R^n$, and let $(q,p,A)$ be in the parabolic subjet of $u_{**}$ at $(t,x)$ (we refer e.g. to \cite{CIL92} for definitions). Assume that $u_{**}(t,x)<u(t,x)$ (otherwise there is nothing to prove), let $\lambda_i$, $x_i$, $i=1,\ldots, m$ be such that $u_{**}(t,x)=\lambda_1 u(t,x_1) +\ldots + \lambda_m u(t,x_m)$, and let $V$ be the span of $(x_1-x,\ldots,x_m-x)$.  Then by similar computations as in \cite[pp.272-273]{ALL97}, letting $A_i = D^2 u(t,x_i)$, it holds that
\begin{equation*}
A_i \geq 0, \;\;\; A \leq \left( \sum \lambda_i A_i^{-1}\right)^{-1},
\end{equation*}
\begin{equation*}
q = \sum_{i=1}^m \lambda_i \partial_t u(t,x_i),
\end{equation*}
\begin{equation*}
p=Du(t,x_i), \;\;i=1,\ldots,m.
\end{equation*}

Note that since $u_{**}(t,\cdot)$ is affine in the directions spanned by $V$ in a neighborhood of $x$, one has $A\leq 0$ on $V$, so that by ellipticity 
$$q - F(t,x,p,A) \geq q - F(t,x,p,B),$$
where $B=\left( \sum \lambda_i A_i^{-1}\right)^{-1}$ on $V^\perp$, $B=0$ on $V$, 
and by \eqref{eq:LMcondV}, we obtain
$$q-F(t,x,p,A) \geq \sum_{i=1}^m \lambda_i (\partial_t u(t,x_i) - F(t,x_i,Du(t,x_i), \tilde{A}_i))$$
where $\tilde{A}_i = A_i$ on $V^\perp$, $\tilde{A}_i=0$ on $V$, so that $\tilde{A}_i\leq A_i$, and by ellipticity of $F$ and the fact that $u$ is a supersolution to the equation we finally obtain
$$q-F(t,x,p,A) \geq 0.$$
\end{proof}

We deduce the following 
\begin{theorem}\label{thm:semi-conv}
Let $F = F(t,x,p,A)\in C([0,T]\times\R^N\times\R^N\times S^N)$ be degenerate elliptic 
such that there exists a $\Phi \in \Lip_{loc}(\R_+;\R)$ with $\Phi(0+)\ge 0$ such that for all $\lambda \in \R_+$,  $t\in [0,T], x, p \in \R^N, q \neq 0 \in \R^N$,
\begin{equation} \label{eq:LM2cond} \left\{\begin{aligned}
  &(y,A) \mapsto  F(t,x + y,p - \lambda (x+y) , B - \lambda I) + \frac{1}{2} \Phi(\lambda) \left|x+y\right|^2 \\
  & \mbox{ is convex on }(\R q)^\perp \times X_{q}, \\
\end{aligned}\right.\end{equation}
where $X_q = \left\{A \in S^N, Aq = 0, A >0 \mbox{ on } (\R q)^{\perp} \right\}$, $B q = 0 , \; B = A^{-1} \mbox{ on }  (\R q)^{\perp}$. 
Let $u_0 \in C^2(\R^N)$ satisfy $D^2 u_0 \geq -\lambda_0 I$ for some $\l_0\ge 0$ and assume that $u$ satisfies for some $K>0$,
\begin{equation}\label{eq:u_growth}
   |u(t,x)| \le K(1+|x|)\quad\forall x\in\R^N,\ t\in [0,T]
\end{equation}
and is a classical solution to
\begin{equation} \label{eq:F2}
\left\{\begin{array}{l}	
\partial_t u  = F(t,x,Du,D^2u) , \\u(0,\cdot) = u_0,\end{array}\right.
\end{equation} 
 then if $\lambda(t)$ is the solution to
 \begin{equation}\label{eq:lambda}
 \left\{\begin{array}{l}	\dot{\lambda}(t) = \Phi(\lambda(t)), \\ \lambda(0) = \lambda_0,\end{array}\right.
\end{equation} 
  one has $D^2 u(t,\cdot) \geq -\lambda(t) I$ for all $t  \geq 0$.
\end{theorem}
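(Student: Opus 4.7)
The plan is to reduce the theorem to Proposition \ref{thm:LM} via the substitution
\[
 v(t,x) := u(t,x) + \tfrac{1}{2}\lambda(t)\,|x|^2,
\]
so that $Dv = Du + \lambda(t)x$, $D^2 v = D^2 u + \lambda(t) I$, and the desired bound $D^2 u(t,\cdot) \geq -\lambda(t)I$ becomes equivalent to the convexity of $v(t,\cdot)$. Combining \eqref{eq:F2} with the ODE \eqref{eq:lambda}, a direct computation shows that $v$ is a classical solution of $\partial_t v = \tilde F(t,x,Dv,D^2 v)$ on $[0,T]\times\R^N$, where
\[
 \tilde F(t,x,p,A) := F\bigl(t,x,\,p-\lambda(t)x,\,A-\lambda(t)I\bigr) + \tfrac{1}{2}\Phi(\lambda(t))\,|x|^2.
\]

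I would then verify that $\tilde F$ satisfies the hypotheses of Proposition \ref{thm:LM}. Degenerate ellipticity is inherited from $F$, and the convexity condition \eqref{eq:LMcond} applied to $\tilde F$ is exactly the content of the structural assumption \eqref{eq:LM2cond} evaluated at $\lambda=\lambda(t)$ (the gradient argument $p$ is fixed, and the $-\lambda(t)(x+y)$ shift and quadratic correction are precisely what is built into \eqref{eq:LM2cond}). Moreover, $v(0,\cdot) = u_0 + \tfrac{1}{2}\lambda_0|x|^2$ is convex by the hypothesis $D^2 u_0 \geq -\lambda_0 I$, so the partial convex envelope satisfies $v_{**}(0,\cdot) = v(0,\cdot)$. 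The coercivity requirement of Proposition \ref{thm:LM} is met as soon as $\lambda(t) > 0$, thanks to the linear growth \eqref{eq:u_growth} of $u$; to handle the case $\lambda_0 = 0$, I would approximate by replacing $\lambda_0$ with $\lambda_0 + \varepsilon$, noting that the resulting solution $\lambda^\varepsilon$ of \eqref{eq:lambda} stays strictly positive, and pass to the limit $\varepsilon\to 0$ using continuous dependence of the ODE.

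Once these assumptions are in place, Proposition \ref{thm:LM} yields that $v_{**}$ is a viscosity supersolution of $\partial_t w = \tilde F(t,x,Dw,D^2 w)$. On the other hand, $v$ itself is a classical, hence viscosity, subsolution of the same equation with the same initial datum. A comparison principle then gives $v \leq v_{**}$ on $[0,T]\times\R^N$, and combined with the trivial inequality $v_{**}\leq v$ this forces $v_{**} = v$, so that $v(t,\cdot)$ is convex on $\R^N$ for every $t\in[0,T]$. This is the desired bound $D^2 u(t,\cdot)\geq -\lambda(t)I$.

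The main obstacle I anticipate is justifying this comparison step: the transformation introduces an $x$-dependent drift $-\lambda(t)x$ in the gradient slot of $F$ and a quadratic correction $\tfrac{1}{2}\Phi(\lambda(t))|x|^2$, so $\tilde F$ has quadratic growth in $x$, and standard comparison results for viscosity solutions do not apply off the shelf. One has to work within a class of functions with controlled growth, exploiting that both $v$ and $v_{**}$ are at most quadratic in $x$ (since $u$ has linear growth), for instance by a doubling-of-variables argument with a penalization of the form $\alpha|x-y|^2 + \eta(|x|^2+|y|^2)$ tailored to the quadratic correction, or by introducing a suitable exponential weight before applying the standard parabolic comparison machinery. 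A secondary point, but one handled in a single line, is to ensure that $\lambda$ does not explode on $[0,T]$; this follows from the local Lipschitz regularity of $\Phi$ together with the standing assumption that $\lambda$ is defined on $[0,T]$.
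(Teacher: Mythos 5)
Your overall plan matches the paper's: set $v(t,x)=u(t,x)+\tfrac12\lambda(t)|x|^2$, show $v$ solves the transformed equation driven by $\tilde F$, verify that $\tilde F$ satisfies \eqref{eq:LMcond} thanks to \eqref{eq:LM2cond}, apply Proposition \ref{thm:LM} to get that the partial convex envelope $v_{**}$ is a supersolution, conclude $v=v_{**}$ by comparison, and handle $\lambda_0=0$ by replacing $\lambda_0$ with $\lambda_0+\varepsilon$ to ensure coercivity. All of this is exactly what the paper does.

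The one place you do not close the argument is precisely the step you flag as the ``main obstacle'': you propose to compare $v$ (subsolution) with $v_{**}$ (supersolution) directly for the transformed nonlinearity $\tilde F$, which has quadratic growth in $x$, and then gesture at possible workarounds (penalized doubling of variables, exponential weights) without committing to one. This is a genuine gap: the standard comparison machinery does not apply to $\tilde F$ as written, and making a doubling argument go through against a quadratic zeroth-order term is delicate. The paper avoids the issue entirely by conjugating back \emph{before} comparing: define $\hat u := v_{**} - \tfrac12\lambda^\varepsilon(t)|x|^2$. Since $v_{**}$ is a supersolution for $\tilde F$, $\hat u$ is a supersolution of the \emph{original} equation \eqref{eq:F2}, and the growth bound \eqref{eq:u_growth} on $u$ together with $v_{**}\le v$ and $v_{**}\ge \tfrac12\lambda^\varepsilon|x|^2 - K(1+|x|)$ gives $\hat u\ge -\tilde K(1+|x|)$. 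One can then invoke the comparison theorem of \cite[Theorem 4.2]{GGIS91} for the original $F$ on functions of linear growth to get $u\le\hat u$, while $v_{**}\le v$ gives $\hat u\le u$. Thus $\hat u=u$, i.e.\ $D^2u = D^2 v_{**} - \lambda^\varepsilon(t) Id \ge -\lambda^\varepsilon(t) Id$, and letting $\varepsilon\downarrow 0$ finishes. In short: your strategy is correct, but your comparison should be carried out in the original variables so that the quadratic growth cancels and the standard linear-growth comparison result applies directly.
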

\begin{proof}
Let $\ve>0$ arbitrary, fix and let $\l^\ve$ be the solution to \eqref{eq:lambda} with initial condition $\l^\ve(0)=\l_0+\ve$. We set $v(t,x) := u(t,x) + \frac{1}{2} \lambda^\ve(t)|x|^2$. Since $\l(t) > 0$, $v(t)$ is coercive, in the sense that $\inf_{t\in[0,T]}\frac{v(t,x)}{|x|}\to \infty$ for $|x|\to\infty$. Moreover, $v$ is a classical solution to 
\begin{equation}\begin{aligned}\label{eq:tdF}
  \partial_t v 
  &= F(t,x,Du,D^2u)+\frac{1}{2}\Phi(\l^\ve(t))|x|^2\\
  &= F(t,x,Dv-\l^\ve(t)x,D^2v-\l^\ve(t)Id)+\frac{1}{2}\Phi(\l^\ve(t))|x|^2\\
  &=: \td F(t,x,Dv,D^2v).
\end{aligned}\end{equation}
By \eqref{eq:LM2cond}, $\td F$ satisfies \eqref{eq:LMcond}. Hence, by Proposition \ref{thm:LM}, the convex envelope $v_{**}$ 0f $v$ is a supersolution to \eqref{eq:tdF}. Equivalently, $\hat u := v_{**} - \frac{1}{2} \lambda^\ve(t)|x|^2$ is a supersolution to \eqref{eq:F2}. By \eqref{eq:u_growth} we have that
  $$v(t,x)\ge \frac{1}{2}\l^\ve(t)|x|^2-K-K|x|$$
for all $x\in \R^d$ which implies that
  $$v_{**}(t,x) \ge \frac{1}{2}\l^\ve(t)|x|^2-\td K-K|x|,$$
for some $\td K>0$ and all $x\in \R^d$. Hence, $\hat u \ge -\td K(1+|x|)$ and we may apply the comparison result \cite[Theorem 4.2]{GGIS91} to obtain
  $$u \le \hat u.$$
On the other hand, since $v_{**}\le v$ we have that
\begin{align*}
  \hat u 
  &\le v - \frac{1}{2} \l^\ve(t)|x|^2 = u.
\end{align*}
Hence, $\hat u = u$ and, since $v_{**}$ is convex, we conclude
  $$D^2 u = D^2 \hat u = D^2 v_{**} - \l^\ve(t) Id \ge - \l^\ve(t) Id.$$
Since this is true for all $\ve>0$ the proof is finished.
\end{proof}

\revP{
Since Theorem \ref{thm:semi-conv} applies to classical solutions only, in order to obtain results for general viscosity solutions we must proceed by suitable approximations. The following corollary is an immediate consequence of the stability of viscosity solutions \cite{CIL92}.

\begin{corollary} \label{lem:StabSemiconv}
Let $F_\varepsilon$ satisfy the assumptions of Theorem \ref{thm:semi-conv} for a given $\Phi_\varepsilon$, and let $u_\varepsilon$ be classical solutions to
\begin{equation*} 
\left\{\begin{array}{l}	
\partial_t u_\varepsilon  = F_\varepsilon(t,x,Du_\varepsilon,D^2u_\varepsilon) , \\u(0,\cdot) = u_0^\varepsilon,\end{array}\right.
\end{equation*} 
with $D^2 u^\varepsilon_0 \geq - \lambda_0^\varepsilon Id$. Further assume that $(F_\varepsilon,u_0^\varepsilon,\Phi_\varepsilon,\lambda_0^\varepsilon)$ converges locally uniformly to $(F,u_0,\Phi,\lambda)$, with $F$ satisfying Assumption \ref{asn:F}, $u_0$ $\in$ $BUC(\R^N)$, and $\Phi \in \Lip_{loc}(\R_+;\R)$. Then, letting $u$ be the unique bounded viscosity solution to
\begin{equation*}
\left\{\begin{array}{l}	
\partial_t u  = F(t,x,Du,D^2u) , \\u(0,\cdot) = u_0,\end{array}\right.
\end{equation*}
  one has $D^2 u(t,\cdot) \geq -\lambda(t) Id$ for all $t  \geq 0$ where $\lambda(t)$ is the solution to
 \begin{equation*}
 \left\{\begin{array}{l}	\dot{\lambda}(t) = \Phi(\lambda(t)), \\ \lambda(0) = \lambda_0.\end{array}\right.
\end{equation*}
\end{corollary}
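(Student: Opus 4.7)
The plan is to apply Theorem \ref{thm:semi-conv} at each level $\varepsilon>0$ and then pass to the limit, exploiting the fact that a pointwise semiconvexity estimate is equivalent to the convexity of a quadratically perturbed function, and that convexity is preserved under pointwise limits. For each fixed $\varepsilon>0$, the hypotheses are precisely those of Theorem \ref{thm:semi-conv}, so I obtain
$$D^2 u_\varepsilon(t,\cdot) \geq -\lambda^\varepsilon(t)\,Id, \qquad t \in [0,T],$$
where $\lambda^\varepsilon$ solves $\dot{\lambda}^\varepsilon = \Phi_\varepsilon(\lambda^\varepsilon)$ with $\lambda^\varepsilon(0) = \lambda_0^\varepsilon$. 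Reformulated, this says that $x \mapsto u_\varepsilon(t,x) + \tfrac{1}{2}\lambda^\varepsilon(t)|x|^2$ is a convex function on $\R^N$ for every fixed $t$, a form that is stable under limits.

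To pass to the limit, two separate convergences are needed. On the ODE side, $\Phi_\varepsilon \to \Phi$ locally uniformly with $\Phi \in \Lip_{loc}(\R_+;\R)$, together with $\lambda_0^\varepsilon \to \lambda_0$, yields by continuous dependence $\lambda^\varepsilon \to \lambda$ locally uniformly on $[0,T]$, where $\lambda$ is the solution of the limit ODE. On the PDE side, $F_\varepsilon \to F$ and $u_0^\varepsilon \to u_0$ locally uniformly, together with the facts that $F$ satisfies Assumption \ref{asn:F} and $u_0 \in BUC(\R^N)$, enable the classical stability argument via half-relaxed limits and comparison (cf.\ \cite{CIL92}): the upper and lower relaxed limits of $(u_\varepsilon)$ are a viscosity sub- and supersolution of the limit equation, and comparison forces both to coincide with the unique bounded viscosity solution $u$, giving $u_\varepsilon \to u$ locally uniformly on $[0,T]\times\R^N$.

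With both convergences in hand the conclusion is immediate. Since a pointwise limit of convex functions is convex, for every fixed $t \in [0,T]$ the function
$$x \mapsto u(t,x) + \tfrac{1}{2}\lambda(t)|x|^2 = \lim_{\varepsilon \to 0}\Bigl(u_\varepsilon(t,x) + \tfrac{1}{2}\lambda^\varepsilon(t)|x|^2\Bigr)$$
is convex on $\R^N$, which is the desired bound $D^2 u(t,\cdot) \geq -\lambda(t)\,Id$ in the sense of distributions.

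The main, and rather mild, obstacle is the locally uniform convergence $u_\varepsilon \to u$: the classical solutions $u_\varepsilon$ are only known to have linear growth, whereas $u$ is singled out among \emph{bounded} viscosity solutions. Handling this requires verifying that the half-relaxed limits of $(u_\varepsilon)$ are sub- and supersolutions of the limit problem and then invoking the comparison principle available under Assumption \ref{asn:F} to identify them with $u$. Beyond Theorem \ref{thm:semi-conv} and the standard stability machinery of \cite{CIL92}, no further estimate is needed.
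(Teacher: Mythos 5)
Your proposal is correct and is essentially the detailed version of what the paper asserts: the paper simply states that the corollary "is an immediate consequence of the stability of viscosity solutions \cite{CIL92}" without writing out the argument, and your two-step passage to the limit (ODE continuous dependence to get $\lambda^\varepsilon \to \lambda$, half-relaxed limits and comparison to get $u_\varepsilon \to u$ locally uniformly, then preservation of convexity under pointwise limits applied to $x\mapsto u_\varepsilon(t,x)+\tfrac12\lambda^\varepsilon(t)|x|^2$) is exactly what that phrase is meant to encapsulate. You are right to flag the boundedness issue for the half-relaxed limits argument; note that in the paper's actual applications (Proposition \ref{ex:LMcond1}) the approximating data $u_0^\varepsilon$ are chosen in $C^2_b$ so the $u_\varepsilon$ are uniformly bounded and the standard comparison in the framework of Assumption \ref{asn:F} applies directly, which is the implicit resolution.
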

}

We now give examples (corresponding to the cases in Proposition \ref{ex:LMcond1}) for which \eqref{eq:LM2cond} holds.

\begin{proposition}\label{ex:LMcond}
\begin{enumerate}
\item Let
    $$F=F(t,x,p)  \in C([0,T];C^2_b(\R^N \times \R^N)).$$
 Then \eqref{eq:LM2cond} is satisfied with
    $$\Phi(\lambda) =  \left\| F_{xx} \right\|_\infty +  2 |\lambda |\left\| F_{xp} \right\|_\infty + \lambda^2  \left\| F_{pp} \right\|_\infty.$$
 More generally, let $F=F(t,x,p)  \in C([0,T]\times \R^N \times \R^N)$ such that $(x,p) \mapsto F(t,x,p)$ is semiconvex of order $C_F$. Then, \eqref{eq:LM2cond} is satisfied with 
        $$\Phi(\lambda) =  C_F(1+\l^2).$$
\item Let
             $$F(x,p,A) = Tr(a(x,p) A) \in C(\R^N \times\R^N \times S^N),$$
            where $a(x,p) \in C^2(\R^N\times \R^N)$ is nonnegative, has bounded second derivative 
            and $(y,p)\mapsto \sqrt{a(y,p)}$ is convex. Then \eqref{eq:LM2cond} is satisfied with
             $$\Phi(\lambda) =N \lambda \left\| a_{xx} \right\|_\infty +  2N \lambda^2 \left\| a_{xp} \right\|_\infty + N\lambda^3  \left\| a_{pp} \right\|_\infty.$$
\item  Let 
    $$F = F(t,A) \in C([0,T] \times S^N)$$
  be convex and non-decreasing in $A\in S^N$. Then \eqref{eq:LM2cond} is satisfied with $\Phi = 0$.  
\item  Let $F=F(t,x,p,A)\in C([0,T]\times\R \times \R \times \R)$ 
such that $(x,p) \mapsto F(t,x,p,A)$ is semiconvex of order $C_F(A)$. Then, \eqref{eq:LM2cond} is satisfied with
     $$\Phi(\lambda) = C_F(\l)(1+\l^2).$$
\end{enumerate}
\end{proposition}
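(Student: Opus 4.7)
In each case the strategy is to verify the convexity requirement \eqref{eq:LM2cond} directly, picking $\Phi(\lambda)$ just large enough to dominate whatever failure of convexity arises from the chain rule through $y \mapsto (x+y,\,p-\lambda(x+y),\,B-\lambda I)$. Throughout, fix $q\neq 0$, write $y\in(\R q)^\perp$ and $A\in X_q$, and note that $B$ depends linearly on $A\in X_q$ via $B = A^{-1}$ on $(\R q)^\perp$, $Bq=0$.

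\textbf{Case (1).} Since $F$ does not depend on $A$, the requirement reduces to convexity in $y$ alone. For $F\in C^2_b$, compute the Hessian in $y$ of $G(y):=F(t,x+y,p-\lambda(x+y))$ via the chain rule, obtaining $D^2_yG = F_{xx} - 2\lambda F_{xp} + \lambda^2 F_{pp}$. Adding $\Phi(\lambda)\,\mathrm{Id}$ and bounding term by term yields the stated $\Phi$. In the semiconvex version, $(x,p)\mapsto F(t,x,p)$ semiconvex of order $C_F$ is equivalent to $F(t,x,p)+\tfrac{C_F}{2}(|x|^2+|p|^2)$ being convex; evaluating this along the affine map $y\mapsto(x+y,p-\lambda(x+y))$ keeps it convex in $y$, and the remaining quadratic contribution is controlled by $C_F(1+\lambda^2)$.

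\textbf{Case (2).} Reducing to the setting $q=e_N$ as usual, $B-\lambda I$ splits into an $(N-1)\times(N-1)$ block $(A')^{-1}$ on $(\R q)^\perp$ and $-\lambda I$ on the rest. Writing
\[
F(t,x+y,p-\lambda(x+y),B-\lambda I)=a(x+y,p-\lambda(x+y))\bigl(\mathrm{Tr}((A')^{-1})-\lambda N\bigr),
\]
the main point is the joint convexity in $(y,A')$ of the first factor $h(y)^2 \mathrm{Tr}((A')^{-1})$, with $h(y):=\sqrt{a(x+y,p-\lambda(x+y))}$, which is convex and nonnegative by assumption. I would establish this by writing $h(y)^2\mathrm{Tr}((A')^{-1})=\sum_i(h(y)e_i)^T(A')^{-1}(h(y)e_i)$ and combining the joint convexity of the matrix fractional function $(w,M)\mapsto w^TM^{-1}w$ on $\R^{N-1}\times S^{N-1}_{++}$ with the observation that, because this function is quadratic (degree-2 homogeneous) and nondecreasing in $|w|$, precomposing $w$ with the convex nonnegative $h(y)$ preserves joint convexity. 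The residual term $-\lambda N\,a(x+y,p-\lambda(x+y))$ depends only on $y$, and a second-order chain-rule computation as in Case (1) (using the bounded Hessian of $a$) shows that its negative part is absorbed into $\tfrac{\Phi(\lambda)}{2}|x+y|^2$ with exactly $\Phi(\lambda)=N\lambda\|a_{xx}\|_\infty+2N\lambda^2\|a_{xp}\|_\infty+N\lambda^3\|a_{pp}\|_\infty$.

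\textbf{Case (3).} Since $F$ depends only on $(t,A)$, the claim reduces to convexity of $A\mapsto F(t,B-\lambda I)$ on $X_q$. The operator convexity of the inversion map on positive operators gives, for $A_1,A_2\in X_q$ and $\mu\in[0,1]$, $B(\mu A_1+(1-\mu)A_2)\preceq \mu B(A_1)+(1-\mu)B(A_2)$ on $(\R q)^\perp$, with equality (both $=0$) on $\R q$. Monotonicity of $F$ then lets us replace the argument by the convex combination, after which convexity of $F$ in $A$ closes the argument, yielding $\Phi\equiv 0$.

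\textbf{Case (4).} In dimension one, for any $q\neq 0$ we have $(\R q)^\perp=\{0\}$ and $X_q=\{0\}$, so \eqref{eq:LM2cond} must hold only at $(y,A)=(0,0)$ and is therefore automatic for any choice of $\Phi$. In particular the formula $\Phi(\lambda)=C_F(\lambda)(1+\lambda^2)$ is valid; this specific choice is convenient in Theorem \ref{thm:semi-conv} because it lies in $\Lip_{loc}$ and is compatible with the semiconvexity estimates needed in approximation arguments. The anticipated main obstacle is Case (2), where the delicate point is the joint convexity argument for $h(y)^2\mathrm{Tr}((A')^{-1})$; routine product-of-convex-functions reasoning fails here and one genuinely needs the matrix-fractional reformulation combined with the $\sqrt{a}$-convexity hypothesis.
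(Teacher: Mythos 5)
Cases (1)--(3) are correct. In case (2) you give a self-contained argument: you reduce $h(y)^2\mathrm{Tr}((A')^{-1})$ to a sum of matrix-fractional terms $(h(y)e_i)^T(A')^{-1}(h(y)e_i)$ and use joint convexity of $(w,M)\mapsto w^TM^{-1}w$ together with the observation that precomposing a nonnegative-homogeneous, radially monotone, jointly convex function with a convex nonnegative scalar $h$ in the $w$-slot preserves joint convexity. The paper instead cites \cite[Theorem 3.1, Remark (ii)]{LM06} for this point, so your route is genuinely different and more elementary; it reproduces the same $\Phi$. Case (3) is the same as the paper (operator convexity of inversion on $X_q$, plus monotonicity and convexity of $F$, the paper citing the appendix of \cite{ALL97} for the former).

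Case (4) contains a genuine gap. You read the domain in \eqref{eq:LM2cond} literally as $(\R q)^\perp\times X_q$, which in $N=1$ is $\{0\}\times\{0\}$, and conclude the convexity condition is vacuous for any $\Phi$. This cannot be right: it would allow $\Phi\equiv 0$ for every one-dimensional $F$, and then Theorem \ref{thm:semi-conv} would assert exact preservation of semiconvexity, which fails already for $u_t=F(x)$ with $F$ smooth, bounded, and $F''(0)<0$ (there $u_{xx}(t,0)=u_{xx}(0,0)+tF''(0)\to-\infty$). The reason is that the $y$-convexity actually used in the proof of Proposition \ref{thm:LM} is along $V$, not $V^\perp$: the competitors $x_i$ in the definition of the partial convex envelope satisfy $x_i-x\in V$, and the inequality $F(t,x,p,B)\le\sum\lambda_iF(t,x_i,p,\tilde A_i)$ is exactly convexity of $(y,A)\mapsto F(t,x+y,p,A^{-1}\text{ on }V^\perp)$ evaluated at points $y_i\in V$, $C_i=(\tilde A_i)^{-1}\in X_V$ with convex combination $(0,\sum\lambda_iC_i)$. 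So the displayed $(\R q)^\perp$ should be $\R q$ (resp. $V$ in place of $V^\perp$), and in dimension one the $y$-domain is all of $\R$. One then has to check, as the paper does, that
\[y\mapsto F\bigl(t,x+y,\,p-\lambda(x+y),\,-\lambda\bigr)+\tfrac{1}{2}\Phi(\lambda)|x+y|^2\]
is convex in $y\in\R$, and this is precisely what the semiconvexity of $(x,p)\mapsto F(t,x,p,A)$ of order $C_F(A)$ gives via the same quadratic-form bookkeeping as in your case (1), producing $\Phi(\lambda)=C_F(\lambda)(1+\lambda^2)$. In short: the claim is not automatic, and the stated $\Phi$ is doing real work.
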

\begin{proof}
(1): Immediate.
  
(2): For $\l\in\R_+$, $x,p\in\R^N$, $q\ne0 \in\R^N$ we aim to prove convexity of 
\begin{align*}
   (y,A)
   \mapsto& F(x+y,p-\l(x+y),B-\l I)+\frac{1}{2}\Phi(\l)|x+y|^2 \\
   =& a(x+y,p-\l(x+y))Tr(B)\\&-a(x+y,p-\l (x+y))\l N +\frac{1}{2}\Phi(\l)|x+y|^2\\
   =&:F_1(x+y,p,B)+F_2(x+y,p).
\end{align*}
For the first part, $F_1$, we note that, by \cite[Theorem 3.1, Remark (ii)]{LM06}, convexity of $(y,A)\mapsto F_1(x+y,p,B)$ follows from convexity of $\sqrt{a}$. For the second part $F_2$ we note that 
\begin{align*}
   D_{yy}F_2 
   &= -\l N D_{yy}a(x+y,p-\l (x+y)) + N\l^2 D_{yp}a(x+y,p-\l (x+y)) \\
   &+ N\l^3 D_{pp} a(x+y,p-\l (x+y)) + \Phi(\l)\\
   &\ge -\l N \|D_{yy}a\|_\infty - N\l^2 \|D_{yp}a\|_\infty - N\l^3 \|D_{pp} a\|_\infty + \Phi(\l)\\
   &\ge0.
\end{align*}

(3): Let $q \ne 0 \in\R^N$. By \cite[Appendix]{ALL97} the map $A \mapsto A^{-1}$ is convex on $X_q$, which implies \eqref{eq:LM2cond} with $\Phi = 0$.

(4): Note that we have $X_q = \{0\}$ in \eqref{eq:LM2cond} and thus only convexity in $y$ has to be checked, which easily follows from semiconvexity of $F$.
\end{proof}

\revP{
We are finally in the position to prove Proposition \ref{ex:LMcond1}.

\begin{proof}[Proof of Proposition \ref{ex:LMcond1}]
Note that Assumption \ref{asn:odeF} deals with semiconcavity bounds whereas Theorem \ref{thm:semi-conv} yields semiconvexity bounds, so in each case we pass from one to the other by considering $\tilde{u} = -u$, $\tilde{F}:= -F(t,x,-r,-p,-X)$. We also make the change of variables $\ell = \lambda^{-1}$ so that to a given $\Phi$ corresponds $V_F(\ell)=-\ell^2 \Phi(\ell^{-1})$.

All the cases then follow by combining Corollary \ref{lem:StabSemiconv} and Proposition \ref{ex:LMcond}. The only point to be verified is the existence of approximations by classical solutions.

We present the details for the case (1): Let $v$ be the viscosity solution to
\[\partial_t v = F(t,x,Dv), \;\; v(0,\cdot) = v_0. \]
For $\ve>0$ let $w^\ve$ be the classical solution (cf.\ e.g.\ \cite[chapter XIV]{L96})) to
  \begin{equation*}\label{eq:plp-approx}\begin{aligned}
    \partial_t w^\ve &= -F(t,x,-Dw^\ve) + \ve \D w^\ve,\\
     w^\ve(0) &= - u_0^\ve,
  \end{aligned}\end{equation*}
  where $u_0^\ve \in C^2_b(\R^N)$ converges to $u_0$ locally uniformly. Note that if $F_1$, $F_2$ satisfy \eqref{eq:LM2cond} with $\Phi_1,\Phi_2$, then so does $F_1+F_2$ with $\Phi_1+\Phi_2$. Hence by Proposition \ref{ex:LMcond} (1) and (3), we see that $F_\varepsilon(t,x,p,A)= -F(t,x,-p) + \varepsilon Tr(A)$ satisfies \eqref{eq:LM2cond} with $$\Phi(\lambda) =  \left\| F_{xx} \right\|_\infty +  2 |\lambda |\left\| F_{xp} \right\|_\infty + \lambda^2  \left\| F_{pp} \right\|_\infty.$$
Hence, we can apply Corollary \ref{lem:StabSemiconv} to obtain that
\[D^2 v(t) \leq \lambda(t) Id,\]
where $\dot{\lambda}(t) = \Phi(\lambda(t))$ and $\lambda(0) = \|(D^2 v_0)_+\|_{\infty}$. Noting $\ell(t) = \lambda(t)^{-1}$, one has $\dot{\ell}(t) = V_F(\ell(t))$ with
$$V_F(\ell) =  - \left\| F_{xx} \right\|_\infty \ell^2  -  2\left\| F_{xp} \right\|_\infty \ell - \left\| F_{pp} \right\|_\infty,$$
so that Assumption \ref{asn:odeF} is indeed satisfied.

The cases (2), (3), (4) follow similarly (the existence of smooth solutions for the approximating equations follows for instance from the existence results in \cite[chapter XIV]{L96}).
\end{proof}
}

\revP{
We also need the following standard lemma, we include its proof for completeness.
\begin{lemma} \label{lem:Lipvisc}
Let $F$ be continuous and degenerate elliptic, and given $v_0$ bounded and Lipschitz on $\R^N$ let $v$ solve in viscosity sense
\[\partial_t v = F(Dv,D^2v), \;\; v(0,\cdot)=v_0.\]
Then for all $t \geq 0$, 
\[ \left( \sup v(t,\cdot) - \inf v(t,\cdot)\right) \leq \sup v_0 - \inf v_0,\]
\[\left\|Dv(t,\cdot)\right\|_\infty \leq \left\|Dv_0\right\|_\infty.\]
\end{lemma}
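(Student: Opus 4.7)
The plan is to exploit the translation invariance of the equation (in both $x$ and the unknown $v$, since $F$ depends only on $Dv, D^2v$) together with the comparison principle for viscosity solutions of degenerate parabolic equations of the form $\partial_t w = F(Dw,D^2w)$.

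First I would establish the sup--inf bound. Set $M:=\sup v_0$, $m:=\inf v_0$, and define the spatially constant functions
\[
  \bar v(t,x) := M + F(0,0)\,t, \qquad \underline v(t,x) := m + F(0,0)\,t.
\]
Since their gradients and Hessians vanish, both are classical (hence viscosity) solutions of $\partial_t w = F(Dw,D^2w)$, and at $t=0$ they satisfy $\underline v(0,\cdot)\le v_0 \le \bar v(0,\cdot)$. A standard comparison principle for bounded viscosity solutions of equations with no $(t,x,r)$-dependence (where uniqueness in $BUC$ holds, e.g.\ as recalled in the appendix of this paper) then yields $\underline v \le v \le \bar v$ on $[0,\infty)\times\R^N$. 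Subtracting gives
\[
  \sup_x v(t,x) - \inf_x v(t,x) \;\le\; (M+F(0,0)t) - (m+F(0,0)t) \;=\; \sup v_0 - \inf v_0,
\]
which is (1).

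For the Lipschitz bound (2), fix $h\in\R^N$ and set $L:=\|Dv_0\|_\infty$. Consider
\[
  w_1(t,x) := v(t, x+h), \qquad w_2(t,x) := v(t,x) + L|h|.
\]
Because $F$ depends neither on $x$ nor on the value of the unknown, both $w_1$ and $w_2$ are viscosity solutions of the same equation $\partial_t w = F(Dw,D^2 w)$. At $t=0$ we have $w_1(0,x) = v_0(x+h) \le v_0(x) + L|h| = w_2(0,x)$ by the Lipschitz bound on $v_0$. Comparison gives $w_1 \le w_2$ for all $t\ge 0$, i.e.\ $v(t,x+h) - v(t,x) \le L|h|$. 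Exchanging the roles of $x$ and $x+h$ yields the reverse inequality, hence $|v(t,x+h)-v(t,x)| \le L|h|$ for all $t,x,h$, which is exactly $\|Dv(t,\cdot)\|_\infty \le L$.

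The only genuine ingredient beyond bookkeeping is the comparison principle in $BUC(\R^N)$ for equations of the form $\partial_t w = F(Dw,D^2 w)$ with $F$ continuous and degenerate elliptic; this is the standard result of Crandall--Ishii--Lions and is implicit in Theorem \ref{thm:app_wp}. I anticipate no obstacle other than ensuring that $w_1, w_2$ (and $\bar v, \underline v$) lie in the uniqueness class in which comparison is applied, which is automatic since $v \in BUC$ by the well-posedness result and the addition of a constant (or a spatial translation) preserves this class.
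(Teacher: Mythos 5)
Your argument is correct and is essentially the paper's own proof: for the oscillation bound you compare with the spatially constant solutions $\sup v_0 + F(0,0)t$ and $\inf v_0 + F(0,0)t$, and for the Lipschitz bound you compare $v(t,\cdot+h)$ with $v(t,\cdot)+\|Dv_0\|_\infty|h|$, both of which solve the same translation-invariant equation. The paper's proof is just a more compressed rendering of the same two comparisons.
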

\begin{proof}
The first claim follows by comparing $v$ with solutions of the form $M +t F(0,0)$, taking $M$ equal to $\sup v_0$  and $\inf v_0$.

For a given $z \in \R^N$, note that $v(\cdot,\cdot+z)$ solves the same equation as $v$ with initial condition $v_0(\cdot+z)$, so that by viscosity comparison, for all $t \geq 0$,
\[ \sup_{x \in \R^N} \left( v(t,x+z) - v(t,x) \right) \leq \sup_{x \in \R^N} \left( v_0(x+z) - v_0(x) \right) \leq \|Dv_0\|_\infty |z|.
\]
\end{proof}
}

\section{Optimality}\label{sec:optimality}

In this section we prove the optimality of the estimates given in Theorem \ref{thm:plp} and thereby also the ones given in Theorem \ref{thm:main} by providing an example of an SPDE and suitable initial conditions for which these estimates are shown to be sharp.

We consider the class of functions
\begin{align*}
\mathcal{U}=& \Big\{u\in BUC(\R) \mbox{ is $2$-periodic with } u(x)=u(-x), u(1+x)=u(1-x),\  \forall x \in \R \\
& \;\;\;\; \mbox{ and s.t. }0\leq  u_x \leq 1, u_{xxx} \leq 0 \mbox{ in the sense of distributions on } (0,1)  \Big\}.
\end{align*}
Note that if $u \in \mathcal{U}$, then
\begin{equation} \label{eq:uxx0}\begin{aligned}
 \left\| (u_{xx})_+\right\|_\infty &= u_{xx}(0) = \sup_{\delta \in (0,1)} \frac{u(\delta) - u(0)}{\delta^2}\\
  \left\| (u_{xx})_-\right\|_\infty &= -u_{xx}(1) = -\sup_{\delta \in (0,1)} \frac{u(1) - u(1-\delta)}{\delta^2},
\end{aligned} \end{equation} 
where both of them may take the value $+\infty$.

\begin{theorem} \label{thm:opt}
Let $u^0$ $\in$ $\mathcal{U}$, $\xi$ $\in$ $C_0([0,T])$ and let $u$ be the solution to
\begin{equation} \label{eq:exopt}
  d u +  \frac{1}{2}|u_x|^2 \circ d{\xi}(t) = \frac{1}{4}|u_x|^2 u_{xx} dt , \;\;\; u(0,\cdot) = u^0.
\end{equation}
Then, $u(t,\cdot) \in \mathcal{U}$ for all $t \geq 0$, and 
  $$u_{xx}(t,0)=\frac{1}{L^+(t)},\quad u_{xx}(t,1)=-\frac{1}{L^-(t)},$$
where $L^+$, $L^-$ are the maximal continuous solutions to
\begin{align}
  dL^+(t) &= -\frac{1}{2 L^+(t)}dt + d\xi(t) \mbox{ on } \{L^+>0\}, \;\;L^+(t) \geq 0, \;\; L^+(0) = \frac{1}{\left\| (u^0_{xx})_+\right\|_\infty}, \label{eq:opt_SDE} \\
  dL^-(t) &= -\frac{1}{2 L^-(t)}dt - d\xi(t) \mbox{ on } \{L^->0\}, \;\;L^-(t) \geq 0, \;\; L^-(0) = \frac{1}{\left\| (u^0_{xx})_+\right\|_\infty}. \label{eq:opt_SDE2}
\end{align} 
\end{theorem}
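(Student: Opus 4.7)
My plan is to establish, in parallel, the invariance of $\mathcal{U}$ under the flow of \eqref{eq:exopt} and the identification of $1/u_{xx}(t,0)$ and $-1/u_{xx}(t,1)$ with the maximal reflected solutions $L^\pm(t)$. The main technical tool is the Trotter--Kato approximation (Corollary \ref{cor:trotter-kato}), which reduces the analysis to how the two semigroups $S_H$ and $S_F$ (with $F(p,A)=\tfrac14 p^2 A$) act on quadratic profiles.

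For the invariance of $\mathcal{U}$, $2$-periodicity and the two symmetries about $x=0$, $x=1$ pass to $u(t,\cdot)$ by uniqueness of viscosity solutions (Theorem \ref{thm:app_wp}), since \eqref{eq:exopt} is invariant under the corresponding transformations; $\|u_x\|_\infty \leq 1$ follows from Lemma \ref{lem:Lipvisc}; and $u_x \geq 0$ on $(0,1)$ follows from a comparison of $u(t,\cdot+h)$ with $u(t,\cdot)$ on $(0,1-h)$ using $u^0_x \geq 0$ there. The delicate condition $u_{xxx} \leq 0$ on $(0,1)$ would be verified step by step along the splitting, using Lemma \ref{lem:InfSup} applied after one spatial integration for the inf/sup-convolution, and a maximum principle applied to the formal evolution equation for $u_{xxx}$ under $S_F$, made rigorous via smooth approximation in the spirit of Corollary \ref{lem:StabSemiconv}.

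Once $u(t,\cdot) \in \mathcal{U}$ is known, the symmetries force $u_x(t,0) = u_x(t,1) = 0$, whence \eqref{eq:exopt} yields that $u(t,0) = u^0(0)$ and $u(t,1) = u^0(1)$ for all $t$. The upper bound $u_{xx}(t,0) \leq 1/L^+(t)$ is then an immediate application of Theorem \ref{thm:plp} with $a(p) = p^2/4$ (so that $\|a_{pp}\|_\infty = 1/2$, matching the drift $-1/(2\ell)$ in \eqref{eq:opt_SDE}). For the reverse bound, I substitute the quadratic ansatz $v(t,x) = u^0(0) + x^2/(2\ell(t))$ into \eqref{eq:exopt}: matching the coefficient of $x^2$ gives precisely $d\ell = -\tfrac{1}{2\ell} dt + d\xi$, so $v$ is a local classical solution near $x=0$ whenever $\ell$ solves this SDE. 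Comparing $u^0$ from below on a small interval $(-\delta_0,\delta_0)$ with a truncated, symmetric, periodically extended parabola lying in $\mathcal{U}$, and then invoking the comparison principle for \eqref{eq:exopt} locally, yields $u(t,x) \geq u^0(0) + x^2/(2\ell(t))$ on a neighborhood of $0$, hence $u_{xx}(t,0) \geq 1/\ell(t)$ by \eqref{eq:uxx0}. Taking $\ell$ to be the maximal solution $L^+$ (Proposition \ref{prop:Xhat}) closes both bounds, and the analogous analysis at $x=1$ gives $u_{xx}(t,1) = -1/L^-(t)$.

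The principal obstacle is the preservation of $u_{xxx} \leq 0$ under the parabolic semigroup $S_F$: the tools of Section \ref{sec:semiconv} yield one-sided bounds only on $D^2 u$, not on higher derivatives, so this requires a separate parabolic maximum-principle argument on smoothed initial data, followed by viscosity-solution stability. A secondary difficulty is the reflection of $\ell$ at $0$: when $L^+(t) = 0$ the ansatz degenerates, and one must combine the maximality in Proposition \ref{prop:Xhat} with the continuity in the driving path from Proposition \ref{prop:ext_jumps} to argue that the quadratic approximation restarts correctly through each zero of $L^+$.
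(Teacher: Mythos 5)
Your overall structure (invariance of $\mathcal{U}$, upper bound from Theorem \ref{thm:plp}, lower bound by identifying $1/u_{xx}(t,0)$ with an element of $\mathcal{S}(V,\xi,\ell_0)$ and then invoking maximality) matches the paper. However, your key lower-bound step has a genuine directional error.

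You want to compare $u$ from below with the exact quadratic solution $v(t,x)=u^0(0)+x^2/(2\ell(t))$, $\ell$ solving $d\ell=-\tfrac{1}{2\ell}\,dt+d\xi$. But if $u^0\in\mathcal{U}$ then $u^0_{xxx}\le 0$ on $(0,1)$ with $u^0_x(0)=0$, so $u^0_{xx}$ is nonincreasing and hence
\[
u^0(x)\le u^0(0)+\tfrac12 u^0_{xx}(0)\,x^2=v(0,x)\quad\text{for }x\text{ near }0
\]
when $\ell(0)=1/u^0_{xx}(0)=L^+(0)$. So the parabola lies \emph{above} $u^0$, not below, and the comparison $u\ge v$ cannot be initiated at $t=0$. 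The inequality you would obtain by comparing (correctly, i.e.\ from above) is $u_{xx}(t,0)\le 1/\ell(t)$, which is the direction already furnished by Theorem \ref{thm:plp}. Taking $\ell(0)>L^+(0)$ to make the parabola flatter does not rescue the argument easily either: the truncated, periodically-extended parabola is then merely a member of $\mathcal{U}$, not a viscosity subsolution, since truncation by a minimum destroys the subsolution property at the gluing points, and the comparison principle \eqref{eq:app_ctn_visc_soln}/\eqref{eq:FpX} requires a genuine sub/supersolution.

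The paper's proof of the lower bound (Proposition \ref{prop:opt1}) avoids a comparison altogether: for smooth data it differentiates \eqref{eq:exopt} twice in $x$ and evaluates at $x=0$. By the symmetries of $\mathcal{U}$ one has $u_x(t,0)=u_{xxx}(t,0)=0$, so all cross terms in \eqref{eq:uxx} vanish and the equation for $\alpha(t):=u_{xx}(t,0)$ closes to $\dot\alpha=\tfrac12\alpha^3-\dot\xi\,\alpha^2$, i.e.\ $\ell=1/\alpha$ satisfies exactly the ODE \eqref{eq:opt_SDE}. The general case is then obtained by monotone approximation (Lemma \ref{lem:2}) together with the comparison of derivatives through the kinetic/entropy formulation (Lemma \ref{lem:Compux}), and the delicate matter of the reflected boundary at $L^+=0$ is handled by the continuity statement Proposition \ref{prop:opt2}. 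Your plan for proving $u_{xxx}\le 0$ via Trotter--Kato splitting is also different from the paper, which instead uses a vanishing-viscosity approximation and the classical maximum principle (Lemma \ref{lem:stabU}); your approach might be made to work but is less direct, since $S_H$ does not obviously act on $u_x$ in a convexity-preserving way at the level needed here (one would need to rule out shock formation during each $S_H$ step, not merely invoke Lemma \ref{lem:InfSup}).
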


An application of Proposition \ref{prop:reflection_for_BM} yields
\begin{corollary}\label{cor:plp-optimal}
In Theorem \ref{thm:opt} let $\xi = \sigma B$ where $B$ is a Brownian motion. Then
\begin{enumerate}
\item If $\sigma  \leq  1$: a.s. there exists a $T^*$ such that $\|D^2u(t,\cdot)\|_\infty=+\infty$ for all $ t> T^*$.
\item If $\sigma  >  1$: for each $t>0$, a.s. $\|D^2u(t,\cdot)\|_\infty<+\infty$.
\end{enumerate}
\end{corollary}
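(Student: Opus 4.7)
By Theorem \ref{thm:opt}, $u(t,\cdot)\in\mathcal{U}$ for every $t\ge 0$, with $u_{xx}(t,0)=1/L^+(t)$ and $u_{xx}(t,1)=-1/L^-(t)$. Combined with the identity \eqref{eq:uxx0}, which expresses $\|(u_{xx})_\pm\|_\infty$ as the value of $u_{xx}$ at $0$ and $1$ respectively, this gives
\[
\|D^2 u(t,\cdot)\|_\infty = \max\!\left(\tfrac{1}{L^+(t)},\tfrac{1}{L^-(t)}\right)=\frac{1}{L^+(t)\wedge L^-(t)} ,
\]
so the entire question reduces to whether $L^+(t)\wedge L^-(t)$ vanishes. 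After the rescaling $Y^\pm(t):=L^\pm(t)/\sigma$, both $Y^+$ and $Y^-$ are maximal solutions of the same reflected SDE $dY=V(Y)\,dt+dB'$ on $\{Y>0\}$ with $V(y):=-\tfrac{1}{2\sigma^2 y}$, driven by the standard Brownian motions $B'=\pm B$; in particular they share the same law.

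The next step is to apply Proposition \ref{prop:reflection_for_BM} to this $V$. Since $\exp(\pm 2\int_x^y V(u)\,du)=(y/x)^{\mp\alpha}$ with $\alpha:=1/\sigma^2$, a direct computation of
\[
I^+=\int_0^1\!\!\int_x^1 (y/x)^{-\alpha}\,dy\,dx,\qquad I^-=\int_0^1\!\!\int_x^1 (y/x)^{\alpha}\,dy\,dx
\]
shows $I^+<\infty$ for every $\sigma>0$, whereas $I^-<\infty$ iff $\alpha<1$, i.e.\ iff $\sigma>1$. Thus for $\sigma>1$ we are in the regular-boundary case (1) of the proposition, and for $\sigma\le 1$ in the exit-boundary case (2).

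In the regular case ($\sigma>1$), the proposition gives $\P(Y^\pm(t)=0)=0$ for every fixed $t>0$, hence $\P(L^+(t)\wedge L^-(t)>0)=1$, which is assertion (2). In the exit case ($\sigma\le 1$), the second identity in case (2) of the proposition guarantees that once $Y^\pm$ reaches $0$ it stays there, so it suffices to prove that the hitting times $\tau^\pm:=\inf\{t:Y^\pm(t)=0\}$ are a.s.\ finite; then $T^*:=\tau^+\wedge\tau^-$ is finite $\P$-a.s.\ and $L^+(t)\wedge L^-(t)=0$ for all $t>T^*$, yielding (1). To get this almost-sure finiteness I would use a scale-function argument: one checks directly that $s(y):=y^{1+\alpha}$ solves $\tfrac12 s''(y)+V(y)s'(y)=0$ on $(0,\infty)$, so that $s(Y^\pm_{t\wedge\tau^\pm\wedge\tau_N})$ is a bounded martingale where $\tau_N:=\inf\{t:Y^\pm(t)=N\}$. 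Optional stopping gives
\[
\P(\tau^\pm<\tau_N)=1-\bigl(Y^\pm(0)/N\bigr)^{1+\alpha}\xrightarrow[N\to\infty]{}1,
\]
using that the diffusion is non-explosive so that $\tau_N\to\infty$ a.s., and therefore $\tau^\pm<\infty$ a.s.

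The main obstacle is precisely this last bootstrap: Proposition \ref{prop:reflection_for_BM}(2) only delivers hitting of the origin with positive probability in finite time, and the almost-sure conclusion required by part (1) of the corollary must be obtained separately. The scale-function computation above works because the exit-boundary condition $I^-=\infty$ corresponds, in the classical Feller classification, to $s(0+)<\infty$ together with $s(+\infty)=+\infty$, i.e.\ $0$ is accessible while $+\infty$ is not — which is exactly what allows one to upgrade "positive probability" to "almost sure".
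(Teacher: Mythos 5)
Your argument is correct and follows the same route as the paper intends: reduce to the reflected SDEs for $L^\pm$, rescale by $\sigma$, compute $I^\pm$, and apply Proposition~\ref{prop:reflection_for_BM}. The computation $I^+<\infty$ always and $I^-<\infty \iff \alpha<1 \iff \sigma>1$ is right, and you correctly note that case~(2) of Proposition~\ref{prop:reflection_for_BM} only supplies positive probability of hitting $0$, whereas the corollary asserts an almost-sure absorption time; the paper invokes the proposition without making this upgrade explicit, so filling it in is a genuine (if small) contribution. Your scale-function martingale argument is valid, though a shorter route to the same fact is available: since $V\leq 0$, a pathwise comparison gives $Y^\pm(t) \leq Y^\pm(0) + B'(t)$ up to $\tau^\pm$, and the recurrence of Brownian motion already yields $\tau^\pm<\infty$ a.s.\ (for every $\sigma>0$); the role of the boundary classification is then only to determine whether $0$ is absorbing or reflecting. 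One imprecision worth flagging is in your closing paragraph: the property ``$s(0+)<\infty$ and $s(\infty)=\infty$'' holds for \emph{all} $\sigma>0$ in this family (since $s(y)=y^{1+\alpha}$ for any $\alpha>0$), so it does not by itself distinguish exit from regular; what encodes the exit behaviour is $I^-=N(0)=\infty$, i.e.\ the speed measure being non-integrable at $0$, which prevents re-entrance. This does not affect the correctness of the optional-stopping computation, only the attached interpretation.
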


We next proceed to the proof of Theorem \ref{thm:opt}. We shall concentrate on proving $u_{xx}(t,0)=\frac{1}{L^+(t)}$, the other equality can be obtained analogously. By Theorem \ref{thm:plp} we already know that $L^+(t) \le \frac{1}{u_{xx}(t,0)}$. Since also $L^+$ is the maximal solution to \eqref{eq:opt_SDE}, it only remains to prove that $t\mapsto \frac{1}{u_{xx}(t,0)}$ $\in$ $\mathcal{S}(V,\frac{1}{u^0_{xx}(t,0)},\xi)$, which is a consequence of Proposition \ref{prop:opt1} and Proposition \ref{prop:opt2} below.

\begin{lemma} \label{lem:2}
Let $u^0$ $\in$ $C^6_b \cap \mathcal{U}$ and $\xi \in W^{1,1}([0,T]) \cap C^1(0,T)$. Let $L^+, L^-$ be the maximal solutions to \eqref{eq:opt_SDE}, \eqref{eq:opt_SDE2}, let $\tau^{\pm} = \inf\{t>0, L^\pm(t)=0\}$ and $\tau=\tau^+ \wedge \tau^-$. Then $u \in C^{1,4}((0,\tau)\times \R)$ with $u(t,\cdot) \in \mathcal{U}$ for all $t\in [0,\tau)$.
\end{lemma}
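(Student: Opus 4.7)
The plan is to obtain $u$ as the vanishing-viscosity limit of classical solutions $u^\varepsilon$ to the uniformly parabolic regularisation
\begin{equation*}
  \partial_t u^\varepsilon = \frac{1}{4} (u^\varepsilon_x)^2 u^\varepsilon_{xx} - \frac{1}{2} (u^\varepsilon_x)^2 \dot\xi(t) + \varepsilon u^\varepsilon_{xx}, \qquad u^\varepsilon(0,\cdot) = u^0.
\end{equation*}
Since $u^0 \in C^6_b$, $\dot\xi \in L^1([0,T]) \cap C((0,T))$ and the equation is uniformly parabolic with smooth coefficients on $(0,T) \times \R$, classical quasilinear parabolic theory (cf.\ \cite{L96}) provides a unique bounded smooth solution $u^\varepsilon$; the $L^1$ control of $\dot\xi$ accommodates the integrable singularity of the drift at $t=0$.

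Next I would verify that $u^\varepsilon(t,\cdot)$ remains in $\mathcal{U}$ for all $t$, by checking the three defining properties in turn. The symmetries $u^\varepsilon(t,-x)=u^\varepsilon(t,x)$ and $u^\varepsilon(t,1+x)=u^\varepsilon(t,1-x)$ follow from uniqueness, since the regularised equation is invariant under $x\mapsto -x$ and $x\mapsto 2-x$. Setting $v := u^\varepsilon_x$, the induced equation is $v_t = \frac{1}{2}v v_x^2 + \frac{1}{4}v^2 v_{xx} - v v_x \dot\xi + \varepsilon v_{xx}$, and the symmetries pin $v(t,k)=0$ at every integer $k$; the parabolic maximum principle then preserves $0 \le v \le 1$ on $(0,1)$. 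For $c := u^\varepsilon_{xxx}$, writing also $a:=u^\varepsilon_x$, $b:=u^\varepsilon_{xx}$, $d:=u^\varepsilon_{xxxx}$, $e:=u^\varepsilon_{xxxxx}$, three successive differentiations of the PDE yield
\begin{equation*}
  c_t = 3 b^2 c + \frac{3}{2} a c^2 + 2 a b d + \frac{1}{4} a^2 e + \varepsilon e - (3 b c + a d) \dot\xi,
\end{equation*}
with $c(t,0)=c(t,1)=0$ again by symmetry. At an interior maximum of $c$ on $(0,1)$ with $c=0$ one has $d=c_x=0$ and $e=c_{xx}\le 0$, so every non-diffusive term vanishes and $c_t \le \bigl(\frac{1}{4}a^2+\varepsilon\bigr)e \le 0$, whence $c \le 0$ on $(0,1)$.

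To pass to the limit I invoke the uniform bound from Theorem \ref{thm:plp}. The added term $\varepsilon u^\varepsilon_{xx}$ corresponds to adding $\varepsilon A$ to the nonlinearity, which is concave and non-decreasing in $A$, so by Proposition \ref{ex:LMcond1}\,(2)--(3) the regularisation still satisfies Assumption \ref{asn:odeF} with the same drift $V_F(\ell)=-\frac{1}{2\ell}$. Consequently
\begin{equation*}
  \|u^\varepsilon_{xx}(t,\cdot)\|_\infty \le \frac{1}{L^+(t)\wedge L^-(t)} \qquad \text{uniformly in $\varepsilon$, for all } t \in [0,\tau),
\end{equation*}
and combined with $\|u^\varepsilon_x\|_\infty \le 1$ this gives uniform $C^{1,1}$ control on $[0,T]\times\R$ for every $T<\tau$. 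Since $L^\pm>0$ on $[0,\tau)$, one has $b(t,0)=1/L^+(t)\ge c_0>0$ and $b(t,1)\le-c_0<0$ on such $[0,T]$, and $c\le 0$ on $(0,1)$ forces $u^\varepsilon_x$ to vanish non-degenerately at $\mathbb{Z}$. Away from a small neighbourhood of $\mathbb{Z}$ the approximating equation is then uniformly parabolic with coefficients bounded in $C^{1,1}$, so interior Schauder estimates yield $\varepsilon$-uniform $C^{1,4}_{loc}$ bounds; near the points of $\mathbb{Z}$ I would extend $u^\varepsilon$ by even reflection across these points, which by the symmetries produces a solution of the same equation on a full neighbourhood, to which the same interior estimates apply. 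Arzel\`a--Ascoli then delivers a limit $u\in C^{1,4}((0,\tau)\times\R)$ that classically solves \eqref{eq:exopt} and lies in $\mathcal{U}$.

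The main obstacle will be this last step: securing uniform $C^{1,4}$ compactness across the degenerate zero-set $\mathbb{Z}$ of $u^\varepsilon_x$. Away from $\mathbb{Z}$ the regularised problem is uniformly parabolic and standard theory suffices, so the delicate point is to marry the reflected-equation technique with the non-degenerate vanishing of $u^\varepsilon_x$ at $\mathbb{Z}$ — which is ensured precisely by the strict positivity $L^\pm>0$ on $[0,\tau)$ — in order to obtain estimates uniform in $\varepsilon$ on a genuine neighbourhood of each point of $\mathbb{Z}$.
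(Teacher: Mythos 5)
Your setup—vanishing viscosity $u^\varepsilon$, invariance of $\mathcal{U}$ via symmetry and the maximum principle, and the uniform $C^{1,1}$ control from Theorem \ref{thm:plp}—coincides with the paper's. Your maximum-principle argument for $u_{xxx}\le 0$ is in fact a clean way to obtain the sign that the paper also establishes. The divergence, and the gap, is in the final step.

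You propose to obtain uniform $C^{1,4}$ compactness via interior Schauder estimates, which require uniform parabolicity of the coefficient $\tfrac14(u^\varepsilon_x)^2+\varepsilon$. You correctly note that this fails on the set $\mathbb{Z}$ where $u^\varepsilon_x$ vanishes, and you flag this as the main obstacle. The reflection device you suggest does not repair it: the solution is already even about the integers, so reflecting across them reproduces the function you started with on a two-sided neighbourhood; the PDE there is still degenerate as $\varepsilon\to0$, so interior Schauder estimates give nothing new. Strict positivity of $L^\pm$ gives nondegenerate vanishing of $u_x$ at $\mathbb{Z}$ (i.e.\ $u_{xx}(t,0)>0$), but that makes $u_x^2$ vanish \emph{quadratically}, not bounded below; it does not restore uniform ellipticity at those points. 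So as written the compactness across $\mathbb{Z}$ is not secured.

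The paper avoids Schauder theory entirely and instead runs a Bernstein-type bootstrap on the derivative hierarchy: after the sign $u_3\le 0$, the equation \eqref{eq:u3} for $u_3$ has the only superlinear term $\tfrac32 u_1 u_3^2\ge 0$ helping the lower bound, and the terms $\tfrac14 u_1^2\, (u_3)_{xx}$, $2u_1u_2\,(u_3)_x$, $-\dot\xi\,u_1\,(u_3)_x$ are (possibly degenerate) diffusion/transport terms that do not obstruct the maximum principle; a Gronwall estimate then bounds $u_3$ on $[0,\tau)$. Differentiating again, $u_4$, $u_5$, $u_6$ satisfy linear (degenerate) parabolic equations with coefficients controlled by the lower-order derivatives, and the maximum principle again gives bounds uniform in $\varepsilon$ up to any $T<\tau$. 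Boundedness of $u_1,\dots,u_6$ then yields $u\in C^{1,4}((0,\tau)\times\R)$ directly from the equation, without ever invoking uniform ellipticity near $\mathbb{Z}$. If you want to salvage your route, you should replace the Schauder step by exactly this kind of maximum-principle estimate on the equations satisfied by $u_{xxxx}$, $u_{xxxxx}$, $u_{xxxxxx}$.
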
 

\begin{proof}
Without loss of generality, we assume that $u$ is smooth and obtain $L^\infty$ estimates from the PDE applied to the derivatives of $u$. This can be easily justified by considering solutions $u^\varepsilon$ to the equations with an additional viscosity $\varepsilon u_{xx}$ in the right-hand side, and noting that the bounds obtained from the arguments below are uniform in $\varepsilon$.  

Now we first note that the fact that $0 \leq u_x \leq 1$, $u_{xx} \geq 0$ is clear by \eqref{eq:ux}, \eqref{eq:uxx} and the maximum principle, and so is the fact that $u(t,\cdot)$, $u(t,1+\cdot)$ are even for all $t\geq 0$. In addition, we already know from Theorem \ref{thm:plp} that $u_{xx}(t,\cdot)$ is bounded for $t \in [0, \tau)$. We set $u_i := (\partial_x)^i u$ and observe that
\begin{equation} \label{eq:u3}
\left\{ \begin{array}{l} \partial_t u_3 =  \frac 32 u_3^2 u_1 + 3 u_3 u_2^2 + 2 u_4 u_2 u_1 + \frac{1}{4} u_5 u_1^2 - \dot{\xi}(t) \left(3 u_3 u_2  + u_1 u_4\right) \\
u_3(0,x) = (u^0)_3(x) , \;\;u_3(t,0)=0, \;\; u_3 \mbox{ bounded. } \end{array}
  \right. 
\end{equation}
One first checks that $\sup_{x\in\R} u_3(0,x) \leq 0$ implies $\sup_{x\in\R}u_3(t,x) \leq 0$, by a maximum principle argument. Since the only nonlinear term in the right hand side of \eqref{eq:u3} is $3 u_3^2 u_1 \geq 0$, the maximum principle implies that on $[0,  \tau)\times \R_+$,  
$$0 \geq u_3 \geq  - \|u_0\| \exp\left(6 \|u_2\|^2_\infty \tau + \|u_2  \|_\infty \int_0^\tau |\dot{\xi}(s)|ds \right).$$

Then one writes in a similar way the equation for $u_4$ (and then $u_5$, $u_6$), noting that this time they are linear with coefficients depending on $u_1, u_2, u_3$, (resp. $u_1$ to $u_4$, and $u_1$ to $u_5$) so that $u_4$, $u_5$ and $u_6$ also stay bounded for $t <\tau$.

Finally, from \eqref{eq:exopt}, \eqref{eq:ux}, \eqref{eq:uxx}, \eqref{eq:u3} one gets that boundedness of $u_1, \ldots, u_6$ implies continuity of $\partial_t u, \ldots, \partial_t u_4$, i.e. $u \in C^{1,4}([0,\tau)\times \R)$.
\end{proof}

%

\begin{lemma}\label{lem:stabU} Let $u_0 \in \mathcal{U}$, $\xi \in C([0,\infty])$ and $u$ be the solution to \eqref{eq:exopt}. Then, $u(t,\cdot) \in \mathcal{U}$ for all $t \geq 0$. 
\end{lemma}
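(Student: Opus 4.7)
The plan is to establish $u(t,\cdot) \in \mathcal{U}$ by a double approximation: I regularize both the initial data and the equation so as to work with classical solutions where Lemma \ref{lem:2}-type calculations apply globally in time, and then pass to the limit. The soft ingredient that makes this work is that $\mathcal{U}$ is closed under local uniform convergence -- symmetry and periodicity are pointwise, the Lipschitz constant $\le 1$ is preserved under uniform limits, and the monotonicity $u_x \ge 0$ as well as the inequality $u_{xxx} \le 0$ on $(0,1)$ each unpack to distributional statements of the form $\int u\,\phi^{(k)} \ge 0$ against test functions $\phi \in C_c^\infty(0,1)$, $\phi \ge 0$, hence stable under uniform convergence.

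First I would approximate $\xi$ uniformly on compacts by $\xi^n \in C^1$, and $u_0 \in \mathcal{U}$ by $u_0^n := \bigl[(1-\eta_n)u_0 + \eta_n \phi\bigr] * \rho_{\epsilon_n}$, where $\phi(x) = (1-\cos \pi x)/\pi$ is a smooth strict element of $\mathcal{U}$, $\rho_{\epsilon}$ is a symmetric radially decreasing mollifier on $\R$, and $\eta_n, \epsilon_n \to 0$. Using convexity of $\mathcal{U}$ together with a symmetrization argument (oddness of the relevant derivatives around $0$ and around $1$, combined with $\rho_\epsilon(x-y) \ge \rho_\epsilon(x+y)$ for $x,y$ of the same sign), one checks that $u_0^n \in C^\infty \cap \mathcal{U}$ and $u_0^n \to u_0$ uniformly. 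For fixed $n$ I would then add a vanishing viscosity to the equation,
\begin{equation*}
\partial_t u^{n,\epsilon} + \tfrac{1}{2}(u^{n,\epsilon}_x)^2 \dot\xi^n(t) = \tfrac{1}{4}(u^{n,\epsilon}_x)^2 u^{n,\epsilon}_{xx} + \epsilon\, u^{n,\epsilon}_{xx}, \quad u^{n,\epsilon}(0,\cdot) = u_0^n,
\end{equation*}
which is uniformly parabolic for $\epsilon > 0$ and admits a global smooth solution.

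Next I would show $u^{n,\epsilon}(t,\cdot) \in \mathcal{U}$ for all $t \ge 0$ by parabolic maximum principles. Symmetry and $2$-periodicity follow from uniqueness, the PDE being invariant under $x \mapsto -x$ and $x \mapsto x+2$; in particular $u^{n,\epsilon}_x(t,0) = u^{n,\epsilon}_x(t,1) = 0$ and $u^{n,\epsilon}_{xxx}(t,0) = u^{n,\epsilon}_{xxx}(t,1) = 0$ for all $t$. The bounds $0 \le u^{n,\epsilon}_x \le 1$ on $(0,1)$ follow from the maximum principle applied to $v := u^{n,\epsilon}_x$ with zero Dirichlet data at $\{0,1\}$: at an interior extremum $v_x = 0$ and the PDE reduces to $\partial_t v = (\tfrac{1}{4}v^2 + \epsilon)v_{xx}$, which is $\le 0$ at a maximum and $\ge 0$ at a minimum. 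The inequality $u^{n,\epsilon}_{xxx} \le 0$ on $(0,1)$ is preserved by the same computation as in the proof of Lemma \ref{lem:2}, the extra viscous term $\epsilon\,\partial^2_{xx}(u^{n,\epsilon}_{xxx})$ only strengthening the maximum principle.

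Finally I would pass to the limits: $\epsilon \to 0$ via standard vanishing-viscosity stability for viscosity solutions yields $u^{n,\epsilon} \to u^n$ locally uniformly, and then $n \to \infty$ via Theorem \ref{thm:app_wp} and Proposition \ref{prop:ext_jumps} yields $u^n \to u$ locally uniformly. Closedness of $\mathcal{U}$ gives $u(t,\cdot) \in \mathcal{U}$. The main obstacle I anticipate lies in the maximum-principle argument for $u^{n,\epsilon}_{xxx}$, whose evolution involves $u^{n,\epsilon}_4$ and $u^{n,\epsilon}_5$ with signs and boundary behavior that require careful bookkeeping; happily, the interior calculation from Lemma \ref{lem:2} handles this as soon as one pins the boundary values to $0$ by symmetry. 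A secondary technicality is the construction of $u_0^n \in C^\infty \cap \mathcal{U}$, handled by the convex combination with $\phi$ followed by symmetric mollification, as sketched above.
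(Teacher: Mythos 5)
Your proposal is correct and follows essentially the same strategy as the paper: smooth the path, smooth the initial datum within $\mathcal{U}$, add vanishing viscosity to get classical solutions, propagate membership in $\mathcal{U}$ by the maximum-principle computations of Lemma \ref{lem:2}, and conclude by closedness of $\mathcal{U}$ under uniform limits. The only (welcome) difference is that you spell out an explicit construction of the smooth approximating initial data in $\mathcal{U}$, which the paper simply asserts to exist.
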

\begin{proof} Let $u^{0,\varepsilon} \in \mathcal{U}$ be smooth approximations of $u^0$,  $\xi^\varepsilon$ be smooth approximations of $\xi$ and $u^\varepsilon$ be the unique smooth solution (cf.\ \cite{LSU67}) to 
\begin{equation}\label{eq:approx}
  \partial_t u^\varepsilon = \left( \varepsilon + \frac{1}{4}|u_x^\varepsilon|^2\right) u_{xx}^\varepsilon -  \frac{1}{2}|u_x^\varepsilon|^2 \dot{\xi}^\varepsilon(t), \;\;\; u(0,\cdot) = u^{0,\varepsilon}(\cdot).
\end{equation}
Since $u^\varepsilon$ is smooth, as in the proof of the previous lemma we may differentiate \eqref{eq:approx} and use the maximum principle to obtain that for each $\varepsilon>0$, $u^\varepsilon$ is $2$-periodic, symmetric in $x$ around $0$ and $1$, and $0 \leq u_x^\varepsilon \leq 1, u_{xxx}^\varepsilon \leq 0$ on $[0,+\infty) \times (0,1)$. Since $u^\varepsilon \to u$ uniformly and $\mathcal{U}$ is stable under uniform convergence, we can conclude.
\end{proof}

\begin{proposition} \label{prop:opt1}
Assume that $u^0_{xx} (0)< \infty$, then $u_{xx}(t,0) = \frac{1}{L^+(t)}$ for $t \leq \tau^+ := \inf\left\{s > 0, L^+(s) = 0\right\}$.
\end{proposition}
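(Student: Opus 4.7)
The strategy is to reduce the pointwise behaviour of $u_{xx}$ at $x=0$ to an autonomous ODE, matching it with the reflected SDE \eqref{eq:opt_SDE}. I would first treat the smooth case: assume $u^0 \in C^6_b \cap \mathcal{U}$ and $\xi \in W^{1,1}\cap C^1(0,T)$, so that Lemma \ref{lem:2} provides a classical solution $u \in C^{1,4}([0,\tau)\times\R)$ with $\tau = \tau^+ \wedge \tau^-$. Since $u(t,\cdot)\in \mathcal{U}$ is even around $0$, we have $u_x(t,0)=0$ for all $t$. Differentiating \eqref{eq:exopt} twice in $x$, evaluating at $x=0$, and using $u_x(t,0)=0$ to kill most terms in the expansions of $(|u_x|^2)_{xx}$ and $(|u_x|^2 u_{xx})_{xx}$, I obtain the pointwise identity
\begin{equation*}
  \partial_t u_{xx}(t,0) \;=\; \tfrac{1}{2}\, u_{xx}(t,0)^3 \;-\; u_{xx}(t,0)^2\,\dot\xi(t),\qquad t\in[0,\tau).
\end{equation*}

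Setting $L(t):=1/u_{xx}(t,0)$, the chain rule gives $\dot L(t) = -\tfrac{1}{2L(t)} + \dot\xi(t)$ with $L(0)=1/u^0_{xx}(0)$, which is exactly the equation defining $L^+$ on $\{L^+>0\}$. Since the drift $\ell \mapsto -1/(2\ell)$ is locally Lipschitz on $(0,\infty)$, the dynamics admit uniqueness away from the boundary; combined with the inequality $L^+(t)\le 1/u_{xx}(t,0)$ coming from Theorem \ref{thm:plp} and the maximality of $L^+$ in $\mathcal{S}(V,\xi,\ell_0)$, this forces $L(t) = L^+(t)$ for all $t\in[0,\tau^+\wedge \tau^-)$. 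Because Lemma \ref{lem:2} already rules out blow-up before $\tau^+ \wedge \tau^-$, and the ODE itself only degenerates when $L$ reaches $0$, we in fact get the identity up to $\tau^+$ (if $\tau^- <\tau^+$ we can remove this restriction by flattening the initial datum near $x=1$ without changing it in a neighborhood of $x=0$, so that $\tau^-$ for the modified problem is as large as we want; the value of $u_{xx}(t,0)$ depends only on $u$ near $x=0$ by the local nature of the smooth PDE argument).

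For general $u^0\in\mathcal{U}$ with $u^0_{xx}(0)<\infty$ and general $\xi\in C_0([0,T])$, I would approximate by a sequence $u^{0,n}\in C^6_b\cap \mathcal{U}$ converging uniformly to $u^0$ with $u^{0,n}_{xx}(0)\to u^0_{xx}(0)$, and $\xi^n\in C^1$ converging uniformly to $\xi$. Let $u^n$ denote the corresponding solution and $L^{+,n}$ the associated reflected ODE solution with initial value $1/u^{0,n}_{xx}(0)$. The smooth case yields $u^n_{xx}(t,0)=1/L^{+,n}(t)$ for $t<\tau^{+,n}$. Two limit passages then conclude the proof: on the SDE side, Proposition \ref{prop:rsde} gives continuity of $L^{+,n}\to L^+$ uniformly on $[0,T]$; on the PDE side, stability of viscosity solutions (Theorem \ref{thm:app_wp}) gives $u^n\to u$ uniformly, and since $u^n(t,\cdot),u(t,\cdot)\in\mathcal U$ (Lemma \ref{lem:stabU}), the characterisation \eqref{eq:uxx0} of $u_{xx}(t,0)$ as a supremum of difference quotients over $\delta\in(0,1)$ makes $u\mapsto u_{xx}(t,0)$ upper semicontinuous under uniform convergence, and lower semicontinuity for the pair $(u^n,u)\in\mathcal{U}$ follows from the pointwise convergence $(u^n(t,\delta)-u^n(t,0))/\delta^2\to (u(t,\delta)-u(t,0))/\delta^2$. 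Combining these yields $u_{xx}(t,0)=1/L^+(t)$ for all $t<\tau^+$, and the case $t=\tau^+$ then follows by continuity.

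\textbf{Main obstacle.} The delicate point is not the algebraic computation of the ODE (which is routine once $u_x(t,0)=0$ is exploited), but rather the passage to the limit: one needs to ensure that the quantity $u_{xx}(t,0)$ (possibly infinite, and defined via difference quotients) is continuous along the approximation, and that this continuity is compatible with the maximality of $L^+$ near the times when it touches the boundary $\{0\}$. The stability of the maximal reflected flow under perturbations of $\xi$ and $\ell_0$, combined with the supremum characterisation of $u_{xx}(t,0)$ available for functions in $\mathcal U$, is what makes these two limits compatible.
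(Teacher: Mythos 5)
Your smooth-case computation and the resulting ODE for $u_{xx}(t,0)$ match the paper exactly, but the passage to general data contains a genuine gap, and the semicontinuity claim you use to close it is backwards.

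The characterisation \eqref{eq:uxx0}, $u_{xx}(t,0) = \sup_{\delta\in(0,1)}\,(u(t,\delta)-u(t,0))/\delta^2$, is a supremum of functionals continuous under uniform convergence, and a supremum of continuous functionals is \emph{lower} semicontinuous, not upper semicontinuous. Concretely, for $u^n\to u$ uniformly with $u^n,u\in\mathcal U$ one only gets $u_{xx}(t,0)\leq \liminf_n u^n_{xx}(t,0)$. But $u_{xx}(t,0)\leq 1/L^+(t)$ is already given by Theorem \ref{thm:plp}; the direction you still need is $u_{xx}(t,0)\geq 1/L^+(t)$, i.e.\ $\limsup_n u^n_{xx}(t,0)\leq u_{xx}(t,0)$, which is the upper-semicontinuity direction. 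That does \emph{not} follow from uniform convergence alone (second derivatives can concentrate and jump down in the limit), so your two limit passages do not actually combine to give equality.

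The paper circumvents this precisely where your argument breaks down: it does not take arbitrary uniformly convergent approximations, but a \emph{monotone} one. It chooses $u^{0,\eta}\leq u^0$ with $u^{0,\eta}(0)=u^0(0)$, $u^{0,\eta}_{xx}(0)\uparrow u^0_{xx}(0)$, and $\xi^\eta\uparrow\xi$, and then uses the explicit comparison estimate \eqref{eq:FpX} to conclude $u^\eta\uparrow u$ with $u^\eta(t,0)=u(t,0)$ for all $t$. This lets it exchange the two sups,
\[
u_{xx}(t,0)=\sup_{\delta}\,\sup_{\eta}\,\frac{u^\eta(t,\delta)-u^\eta(t,0)}{\delta^2}
=\sup_{\eta}\,u^\eta_{xx}(t,0)
=\sup_{\eta}\,\frac{1}{L^{+,\eta}(t)}=\frac{1}{L^+(t)},
\]
so only the (easy) lower-semicontinuity direction is ever invoked, and the upper bound comes for free from the monotonicity. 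Your proposal is missing this monotone structure, and without it the step ``two limit passages then conclude the proof'' does not go through.

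Two smaller remarks. First, your fix for $\tau^-<\tau^+$ by ``flattening near $x=1$'' appeals to locality of the equation; for a degenerate second-order equation this needs justification, whereas the paper simply tunes $u^{0,\eta}_{xx}(1)$ so that $\tau^{-,\eta}>\tau^{+,\eta}$, avoiding any locality claim. Second, in the smooth case the cancellation at $x=0$ already follows from $u_x(t,0)=0$ alone, so invoking $u_{xxx}(t,0)=0$ is unnecessary (though harmless).
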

\begin{proof}
In the case of  $\xi$ $\in$ $C^1$ and $u\in C^{1,4}$ with $u(t,\cdot) \in \mathcal{U}$ for all $t\ge0$, the result follows from differentiating \eqref{eq:exopt} twice
\begin{align} 
	\partial_t u_x &=  \frac{1}{4} u_{xxx} u_x^2 + \frac{1}{2} u_{xx}^2 u_x - \dot{\xi}(t) u_{xx} u_x,\label{eq:ux}\\
    \partial_t u_{xx} &= \frac{1}{4} u_{xxxx} u_x^2 +  \frac{3}{2} u_{xxx} u_{xx} u_x +  \frac{1}{2} u_{xx}^3 - \dot{\xi}(t) \left(u_{xx}^2 + u_{xxx} u_x\right),\label{eq:uxx}
\end{align}
and noting that $u_x(t,0)=u_{xxx}(t,0)=0$ for all $t\ge 0$.


Let $\xi^\eta \in W^{1,1}([0,T]) \cap C^1(0,T)$ with $\xi^\eta \uparrow \xi$, $\xi^\eta(0)=\xi(0)$.
Further let $u^{0,\eta} \in C^6_b \wedge \mathcal{U}$ with $u^{0,\eta} \to u^0$ uniformly, $u^{0,\eta}(0)=u^0(0)$, $u^{0,\eta} \leq u^0$ and such that $u_{xx}^{0,\eta}(0) \uparrow u^0_{xx}(0)$. Also assume that $u^{0,\eta}_{xx}(1)$ is chosen small enough that if $L^{+,\eta}$, $L^{-,\eta}$ are the solution to \eqref{eq:reflected} driven by $\xi^\eta$ and starting from $\frac{1}{u_{xx}^{0,\eta}(0)}$, $-\frac{1}{u_{xx}^{1,\eta}(0)}$, the hitting times of $0$ satisfy $\tau^{-,\eta} >  \tau^{+,\eta})$.  Let $u^\eta$ be the solution to \eqref{eq:exopt} driven by $\xi^\eta$ and starting from $u^{0,\eta}$. By Lemma \ref{lem:2}, for $t \in [0,T]$, 
 $$u^\eta_{xx}(t,0) = \frac{1}{L^{+,\eta}(t)}.$$ 
 We note that $L^{+,\eta}(t) \uparrow_{\eta\to 0} L^+(t)$ uniformly in $[0,\tau^+]$ and, by Lemma \ref{lem:stabU}, $u_{xx}^\eta(t,0) =  \sup_{\delta \in (0,1)} \frac{u^\eta(t,\delta)  - u^\eta(t,0)}{\delta^2}$. Finally, from \eqref{eq:FpX} it follows that $u^\eta \uparrow u$ with $u^\eta(t,0) = u(t,0)(=u^0(0))$, and we get
$$u_{xx}(t,0) =  \sup_{\delta \in(0,1)} \sup_{\eta >0} \frac{u^\eta(t,\delta) - u^\eta(t,0)}{\delta^2}=\sup_{\eta >0} u_{xx}^\eta(t,0) = \frac{1}{L^{+}(t)}.$$
\end{proof}

\begin{lemma} \label{lem:Compux}
  Let $\xi\in C([0,T])$, $u_{0}\in(BUC\cap W^{1,1})([0,2])$ periodic and $u$ be the corresponding viscosity solution to \eqref{eq:exopt}. Then $v=\partial_{x}u$ is the pathwise entropy solution\footnote{For a theory of pathwise entropy solutions to \eqref{eq:sscl} we refer to \cite{GS16}.} to 
  \begin{equation}\begin{aligned}\label{eq:sscl}
  dv +\frac{1}{2}\partial_{x}v^{2}\circ d\xi(t)& =\frac{1}{12}\partial_{xx}v^{[3]}dt\\
  v(0) & =\partial_{x}u_{0}.
  \end{aligned}\end{equation}
  Let $u_{0}^{1},u_{0}^{2}\in(BUC\cap W^{1,1})([0,2]) \cap \mathcal{U}$ and $u^{1},u^{2}$ be the corresponding viscosity solutions to \eqref{eq:exopt} such that $\partial_{x}u_{0}^{1}\ge\partial_{x}u_{0}^{2}$ a.e. on $(0,1)$. Then for all $t \geq 0$,
    \[
    \partial_{x}u^{1}(t,\cdot)\ge\partial_{x}u^{2}(t,\cdot)\quad\text{a.e. on } (0,1).
    \]
\end{lemma}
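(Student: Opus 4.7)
The statement has two parts, which I would tackle in order.

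For the identification of $v := \partial_x u$ as the pathwise entropy solution of \eqref{eq:sscl}, the natural strategy is approximation. Mollify $\xi$ into a smooth path $\xi^\varepsilon$ and the initial datum $u_0$ into $u_0^\varepsilon \in C^\infty_b \cap \mathcal{U}$, then regularize \eqref{eq:exopt} by adding a vanishing viscosity term $\nu\,\partial_{xx}u$ to produce uniformly parabolic equations with classical solutions $u^{\varepsilon,\nu}$. Differentiating in $x$,
\[
  \partial_t v^{\varepsilon,\nu} + \tfrac{1}{2}\partial_x\!\bigl((v^{\varepsilon,\nu})^2\bigr)\,\dot\xi^\varepsilon \;=\; \tfrac{1}{12}\partial_{xx}\!\bigl((v^{\varepsilon,\nu})^{[3]}\bigr) + \nu\,\partial_{xx}v^{\varepsilon,\nu},
\]
so $v^{\varepsilon,\nu} := \partial_x u^{\varepsilon,\nu}$ is a classical (hence entropy) solution. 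Sending first $\nu\to 0$ then $\varepsilon\to 0$, the stability of viscosity solutions (Theorem~\ref{thm:app_wp}) gives $u^{\varepsilon,\nu} \to u$ uniformly, while the stability and uniqueness of pathwise entropy solutions developed in \cite{GS16} identifies the $L^1_{loc}$ limit of $v^{\varepsilon,\nu}$ as the pathwise entropy solution to \eqref{eq:sscl}. Since $\partial_x u^{\varepsilon,\nu}\to \partial_x u$ distributionally, the two limits coincide.

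For the comparison statement I would work at the level of the smooth approximations above. By Lemma~\ref{lem:stabU}, the class $\mathcal{U}$ is preserved by the dynamics, so for $u_0^i \in \mathcal{U}$ each smooth approximation $u^{i,\varepsilon,\nu}$ is symmetric around $0$ and $1$; in particular
\[
  v^{i,\varepsilon,\nu}(t,0) \;=\; v^{i,\varepsilon,\nu}(t,1) \;=\; 0 \quad \text{for all } t\ge 0.
\]
Choose the mollified initial data $u_0^{i,\varepsilon}$ using a common nonnegative, even kernel, so that the pointwise ordering $\partial_x u_0^{1,\varepsilon} \ge \partial_x u_0^{2,\varepsilon}$ on $(0,1)$ is preserved. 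Applying the classical Kruzkov doubling on the interval $[0,1]$ to the smooth conservation law above, the boundary fluxes at $x = 0$ and $x = 1$ vanish thanks to the zero boundary values, which yields
\[
  \frac{d}{dt}\int_0^1 \bigl(v^{1,\varepsilon,\nu}(t,x) - v^{2,\varepsilon,\nu}(t,x)\bigr)_-\, dx \;\le\; 0.
\]
Since the initial integral is zero by construction, $v^{1,\varepsilon,\nu} \ge v^{2,\varepsilon,\nu}$ a.e.\ on $(0,1)$ for all $t\ge 0$. Passing to the limit $\nu\to 0$ and $\varepsilon\to 0$ using Part~1 yields the claimed inequality $\partial_x u^1(t,\cdot) \ge \partial_x u^2(t,\cdot)$ a.e.\ on $(0,1)$.

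The main obstacle is reconciling the Kruzkov doubling argument with the pathwise nature of $\xi$, since $\dot\xi$ is generally only a distribution. The cleanest way, as above, is to run the doubling argument entirely at the smooth level, where $\dot\xi^\varepsilon$ is a genuine function and classical parabolic machinery applies, and only then pass to the limit by invoking the uniqueness and stability of pathwise entropy solutions from \cite{GS16}. A secondary technical point is ensuring that the symmetries defining $\mathcal{U}$ and the monotonicity $\partial_x u_0^{1,\varepsilon} \ge \partial_x u_0^{2,\varepsilon}$ survive the regularization; this is guaranteed by convolving with a common even nonnegative kernel, which preserves pointwise inequalities and symmetry around $0$ and $1$.
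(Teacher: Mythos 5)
Your proposal follows essentially the same two-step scheme as the paper's proof: a double regularization (mollify $\xi$ and $u_0$, add vanishing viscosity) to produce classical solutions whose $x$-derivative solves a regularized conservation law, identification of the limit via the stability of viscosity solutions (Theorem~\ref{thm:app_wp}) together with the stability of entropy/kinetic solutions from \cite{GS16} (the paper also cites \cite{P02} for the vanishing-viscosity step), and then a comparison argument that exploits $\mathcal{U}$-preservation (Lemma~\ref{lem:stabU}) to get $\partial_x u^{i,\varepsilon,\nu}(t,0)=\partial_x u^{i,\varepsilon,\nu}(t,1)=0$ so that the comparison principle on $(0,1)$ with Dirichlet boundary data applies, followed by passage to the limit. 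The one cosmetic difference is that you invoke a Kruzkov-doubling/$L^1$-contraction argument at the approximate level, whereas the paper, working at the same smooth parabolic level, simply appeals to the standard comparison principle for the regularized equation on a bounded interval; both are valid and amount to the same mechanism.
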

\begin{proof}
  We consider $u_{0}^{n}$ smooth, periodic such that $u_{0}^{n}\to u_{0}$ uniformly and in $W^{1,1}([0,2])$. Further let $\xi^{n}$ smooth with $\xi^{n}\to\xi$ uniformly. For $\ve>0$ let $u^{\ve,n}$ be the unique classical solution to 
  \begin{equation}\begin{aligned}\label{eq:u_approx}
  du^{\ve,n} & =\left(\ve u_{xx}^{\ve,n}+\frac{1}{4}|u_{x}^{\ve,n}|^{2}u_{xx}^{\ve,n}\right)dt-\frac{1}{2}(u_{x}^{\ve,n})^{2}\dot{\xi}^{n}(t)\\
  u^{\ve,n}(0) & =u_{0}^{n}.
  \end{aligned}\end{equation}
  Then $v^{\ve,n}:=\partial_{x}u^{\ve,n}$ is the unique solution to 
  \begin{equation}\begin{aligned}\label{eq:v_approx}
  dv^{\ve,n} & =\left(\ve v_{xx}^{\ve,n}+\frac{1}{12}\partial_{x}(v^{\ve,n}){}^{3}\right)dt-\frac{1}{2}\partial_{x}(v^{\ve,n})^{2}\dot{\xi}^{n}(t)\\
  v^{\ve,n}(0) & =\partial_{x}u_{0}^{n}.
  \end{aligned}\end{equation}
  By stability of viscosity solutions we have $u^{\ve,n}\to u^{n}$ uniformly and $v^{\ve,n}\to v^{n}$ in $C([0;T];L^1)$ by \cite{P02}, where $u^{n}$ is the viscosity solution to \eqref{eq:u_approx} and $v^{n}$ is the kinetic solution to \eqref{eq:v_approx} with $\ve=0$ respectively. By Theorem \ref{thm:app_wp} we have $u^{n}\to u$ uniformly and by \cite[Theorem 2.3, Proposition 2.5]{GS16} we have $v^{n}\to v$ in $C([0,T];L^{1})$, where $u$ is the viscosity solution to \eqref{eq:exopt} and $v$ is the kinetic solution to \eqref{eq:sscl}.
  
  Let now $u_{0}^{1},u_{0}^{2}\in(BUC\cap W^{1,1})([0,2])\cap \mathcal{U}$ with $\partial_{x}u_{0}^{1}\ge\partial_{x}u_{0}^{2}$ a.e. on $(0,1)$. As above, consider the respective approximations $u^{1,\ve,n}$, $u^{2,\ve,n}$, with $u^{1,n}_0, u^{2,n}_0$ smooth elements of $\mathcal{U}$ with $\partial_x u^{1,n}_0 \ge u^{2,n}_0$ in $[0,1]$. Then, as in Lemma \ref{lem:stabU}, $u^{1,\ve,n}(t,\cdot), u^{2,\ve,n}(t,\cdot)$ $\in$ $\mathcal{U}$ for all $t\geq0$. Note that for $u$ $\in C^1 \cap \mathcal{U}$, $\partial_x u(0)=\partial_x u(1)=0$. Hence, $\partial_{x}u^{1,\ve,n}(t,\cdot)\ge\partial_{x}u^{2,\ve,n}(t,\cdot)$ on $[0,1]$ by the comparison principle for \eqref{eq:v_approx} with Dirichlet boundary conditions on $(0,1)$. Taking limits implies the claim.  
\end{proof}

\begin{proposition} \label{prop:opt2}
The map $t\mapsto u_{xx}(t,0) \in (0,\infty]$ is continuous.
\end{proposition}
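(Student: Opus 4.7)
By the symmetry of $u(t,\cdot) \in \mathcal{U}$ around $x=0$ and the structure of \eqref{eq:exopt}, one has $u(t,0) = u^0(0)$ for all $t \geq 0$. Using \eqref{eq:uxx0}, I would write
\[
M(t) := u_{xx}(t,0) = \sup_{\delta \in (0,1)} g_\delta(t), \qquad g_\delta(t) := \frac{u(t,\delta)-u^0(0)}{\delta^2}.
\]
Each $g_\delta$ is continuous in $t$ (by continuity of the viscosity solution $u$ in $(t,x)$, cf.\ Theorem \ref{thm:app_wp}), so $M$ is automatically lower semi-continuous as a pointwise supremum of continuous functions. It remains to establish upper semi-continuity.

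For right-upper semi-continuity at a point $t_0$ with $M(t_0) < \infty$, I would build an explicit quadratic supersolution. A direct computation shows that whenever $L : I \to (0,\infty)$ satisfies the ODE $dL = -\frac{1}{2L}dt + d\xi$ driving \eqref{eq:opt_SDE}, the profile $\Phi(t,x) = \frac{x^2}{2L(t)} + u^0(0)$ is a classical solution of \eqref{eq:exopt}. Taking $L_{t_0}$ with $L_{t_0}(t_0) = \frac{1}{2M(t_0)}$, the defining bound $u(t_0,x) - u^0(0) \leq M(t_0) x^2$ on $(-1,1)$ together with the boundedness and periodicity of $u$ against the quadratic growth of $\Phi_{t_0}(t_0,\cdot)$ yields $u(t_0,\cdot) \leq \Phi_{t_0}(t_0,\cdot)$ on all of $\R$. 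The comparison principle of Theorem \ref{thm:app_wp} then propagates the inequality forward: $u(t,\cdot) \leq \Phi_{t_0}(t,\cdot)$ on $[t_0, t_0 + h] \times \R$, for $h>0$ such that $L_{t_0} > 0$ on $[t_0, t_0+h]$. Hence $M(t) \leq \frac{1}{2L_{t_0}(t)}$ on this interval, and continuity of the ODE flow at $t_0$ gives $\limsup_{t \to t_0^+} M(t) \leq M(t_0)$, which combined with LSC yields right-continuity at $t_0$.

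For left-upper semi-continuity I would invoke Proposition \ref{prop:opt1}, which establishes $M(t) = 1/L^+(t)$ on $[0,\tau^+]$ via monotone approximation by smooth solutions. Continuity of $L^+$ (as a reflected-SDE solution) transfers to $M$, giving full continuity on $[0, \tau^+]$ in the topology of $(0,\infty]$; at $\tau^+$ itself $M(\tau^+) = +\infty$ and continuity in $(0,\infty]$ from both sides is immediate from $L^+(\tau^+) = 0$. Past $\tau^+$, any point $t_1 > \tau^+$ with $M(t_1) < \infty$ can be treated as a new initial time since $u(t_1,\cdot) \in \mathcal{U}$ by Lemma \ref{lem:stabU}, and another application of Proposition \ref{prop:opt1} yields the identity $M(t) = 1/L^+_{t_1}(t)$ on a right-neighborhood of $t_1$, delivering continuity there.

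The main technical subtlety is the comparison step with the quadratically growing supersolution $\Phi_{t_0}$: one needs to ensure the inequality $u \leq \Phi_{t_0}$ holds on all of $\R$ (not just on $(-1,1)$ where \eqref{eq:uxx0} is directly applicable) and that Theorem \ref{thm:app_wp}'s comparison principle accommodates such unbounded competitors. This is manageable because the boundedness and periodicity of $u$ against the quadratic growth of $\Phi_{t_0}$ force $u \leq \Phi_{t_0}$ automatically outside a fixed bounded interval, reducing the comparison to a compact region.
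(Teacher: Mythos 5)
Your argument correctly establishes lower semicontinuity of $t\mapsto u_{xx}(t,0)$ and \emph{right}-upper semicontinuity, but the actual content of Proposition \ref{prop:opt2} is \emph{left}-upper semicontinuity at times $t$ with $u_{xx}(t,0)<\infty$, and this is where your proposal has a genuine gap. One must rule out the scenario $t_n\nearrow t$, $u_{xx}(t_n,0)\to+\infty$, yet $u_{xx}(t,0)<\infty$. Restarting Proposition \ref{prop:opt1} at a time $t_1<t$ with $u_{xx}(t_1,0)<\infty$ only yields $u_{xx}(\cdot,0)=1/L^+_{t_1}(\cdot)$ on $[t_1,\tau^+_{t_1})$; in the above scenario no admissible $t_1$ has $\tau^+_{t_1}>t$, so the ``new initial time'' device never reaches $t$ from the left. (As an aside, your quadratic profile $\Phi(t,x)=\frac{x^2}{2L(t)}+u^0(0)$ with $dL=-\tfrac{1}{2L}dt+d\xi$ is indeed a solution of \eqref{eq:exopt}, which is a nice observation, but it only furnishes a one-sided bound to the \emph{right} of $t_0$ and is thus redundant with restarting Proposition \ref{prop:opt1}; moreover it quietly invokes a comparison principle against unbounded competitors, which Theorem \ref{thm:app_wp} does not directly provide.)

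The missing ingredient is a comparison \emph{from below}. The paper's proof of \eqref{eq:cont} fixes $M>0$ and solves \eqref{eq:exopt} from time $t_n$ with data $u^{t_n,n}\in\mathcal{U}$ chosen so that $u^{t_n,n}_{xx}(0)=M$ and $u^{t_n,n}_x\le u_x(t_n,\cdot)$ on $(0,1)$. Lemma \ref{lem:Compux} (an entropy-solution comparison for the equation satisfied by $u_x$) propagates the ordering of $x$-derivatives to all later times, giving $u_{xx}(t,0)\ge u^n_{xx}(t,0)=1/L^{+,n}(t)$ by Proposition \ref{prop:opt1}; Lemma \ref{lem:2} ensures $\tau^{+,n}>t$ for $n$ large, and since $L^{+,n}(t_n)=1/M$ with $t_n\to t$, one gets $L^{+,n}(t)\to 1/M$. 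Hence $u_{xx}(t,0)\ge M$, and letting $M\to\infty$ yields $u_{xx}(t,0)=+\infty$, which is exactly \eqref{eq:cont}. This Lemma-\ref{lem:Compux}-based lower bound on $u_{xx}(t,0)$ is the key mechanism that your proposal does not supply.
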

\begin{proof}
First note that $t\mapsto u_{xx}(t,0)$ is lower semicontinuous as supremum of continuous functions by \eqref{eq:uxx0}, and taking also into account Proposition \ref{prop:opt1}, we only need to prove that
\begin{equation} \label{eq:cont}
t_n \nearrow t ,\;\;\; u_{xx}(t_n,0) \to +\infty \;\;\Rightarrow \;\; u_{xx}(t,0)=+\infty.
\end{equation}

We fix $M>0$ and let $u^n$ be solutions to \eqref{eq:exopt} but starting from data $u^{t_n,n}$ at time $t_n$, where $u^{t_n,n} \in \mathcal{U}$ is such that $u^{t_n,n}_{xx}(0) = M$ and $u^{t_n,n}_x \leq u_x(t_n,\cdot)$ (this is possible at least for $n$ large enough). By Proposition \ref{prop:opt1}, $u^n_{xx}(s,0) = \frac{1}{L^{+,n}(s)}$ for $s \in [t_n, \tau^{+,n})$, where $$dL^{+,n}(s) = -\frac{1}{2 L^{+,n}(s)}ds + d\xi(s),\;\;\;\; L^{+,n}(t_n) = M^{-1}$$
and $\tau^{+,n} = \inf\left\{s > t_n, L^{+,n}(s) = 0\right\}$. By Lemma \ref{lem:2} one has $\tau^{+,n} > t$ for $n$ large enough, and, clearly, $\lim_{n\to \infty} L^{+,n}(t) = M^{-1}$. Since $u_{xx}(t,0) \geq u^n_{xx}(t,0)$ by Lemma \ref{lem:Compux}, it follows that $u_{xx}(t,0) \geq M$. Since $M$ was arbitrary, this proves \eqref{eq:cont}.
\end{proof}

\appendix

\section{Stochastic viscosity solutions}\label{sec:visc_soln}

%

In this section we briefly recall the definition and main properties of stochastic viscosity solutions to fully nonlinear SPDE of the type
\begin{equation}\begin{aligned}\label{eq:app_SPDE}
d u +  \frac{1}{2} |D u|^2 \circ d{\xi}(t) &= F(t,x,u,Du,D^2u)dt  \quad\text{in }\R^N\times(0,T] \\
u(0,\cdot) &= u_0\quad\text{on }\R^N\times \{0\},
\end{aligned}\end{equation}
where $u_0 \in BUC(\R^N)$, $F\in C([0,T]\times\R^N\times\R\times\R^N\times S^N)$ and $\xi$ is a continuous path.

We recall from \cite[Theorem 1.2, Theorem 1.3]{FGLS16}
\begin{theorem}\label{thm:app_wp}
  Let $u_0,v_0 \in BUC(\R^N)$, $T>0$, $\xi,\z\in C^1_0([0,T];\R)$ and assume that Assumption \ref{asn:F} holds. If $u\in BUSC([0,T]\times\R^N)$, $v\in BLSC([0,T]\times\R^N)$ are viscosity sub- and super-solutions to \eqref{eq:app_SPDE} driven by $\xi,\z$ respectively, then,
  \begin{align} \label{eq:Fst}
    \sup_{[0,T]\times \R^N} (u-v) \le \sup_{\R^N}(u_0-v_0)_+ + \Phi(\|\xi-\z\|_{C([0,T])}) ,
  \end{align}
  where $\Phi$ depends only on $T$, the sup-norms and moduli of continuity of $u_0, v_0$ and the quantities appearing in Assumption \ref{asn:F} \revP{(2)-(3)-(5)}, is non-decreasing and such that $\Phi(0^+)=0$.
  In particular, the solution operator 
    $$S: BUC(\R^N) \times C^1_0([0,T];\R^N) \to BUC([0,T]\times\R^N)$$ 
  admits a unique continuous extension to 
  $$S: BUC(\R^N) \times C^0_0([0,T];\R^N) \to BUC([0,T]\times\R^N).$$
  We then call $u=S^\xi(u_0)$ the unique viscosity solution to \eqref{eq:app_SPDE}. One then has  
  \begin{equation}\label{eq:app_ctn_visc_soln}
    \|S^{\xi}(u_0)-S^{\z}(v_0)\|_{C([0,T]\times\R^N)} \le \|u_0-v_0\|_{C(\R^N)}+\Phi\left(\|\xi -\z\|_{C([0,T])}\right).
  \end{equation}
  In the case where $F=F(p,X)$ only depends on its last two arguments, the estimate simplifies to 
   \begin{align} \label{eq:FpX}
    \sup_{[0,T]\times \R^N} (u-v) \le \sup_{x,y \in \R^N} \left( u_0(x)-v_0(y) - \frac{|x-y|^2}{\sup_{s \in [0,T]} (\xi(s)-\zeta(s))} \right)
  \end{align}
  (with convention $0/0=0$, $1/0=+\infty$).
\end{theorem}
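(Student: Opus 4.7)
The plan is to work first with smooth driving signals $\xi, \zeta \in C^1_0([0,T];\R)$, where \eqref{eq:app_SPDE} is a classical time-dependent fully nonlinear parabolic equation and the stochastic notion of viscosity solution reduces to the classical one. The comparison estimate \eqref{eq:Fst} for such smooth signals, with a modulus $\Phi$ depending only on $\|\xi - \zeta\|_{C([0,T])}$, is the technical heart of the proof; the uniform continuity it expresses then allows one to extend the solution map $S^\xi$ by density from $C^1_0$ to $C^0_0$, and the stability bound \eqref{eq:app_ctn_visc_soln} follows by composing \eqref{eq:Fst} with the standard comparison for equal driving signals.

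For smooth signals I would proceed by doubling of variables, maximizing
$$\Psi(t,x,y) = u(t,x) - v(t,y) - \tfrac{\alpha}{2}|x-y|^2 - \psi(t) - \tfrac{\varepsilon}{T-t}$$
over $[0,T]\times\R^N\times\R^N$ for $\alpha$ large and $\varepsilon$ small, where $\psi$ is a nondecreasing function adapted to $\xi-\zeta$. At a maximum point $(\hat t, \hat x, \hat y)$, the Crandall-Ishii lemma supplies matrices $X, Y$ satisfying the compatibility constraint in Assumption \ref{asn:F}(5), and the viscosity inequalities for $u$ and $v$ combine to give
$$\psi'(\hat t) + \tfrac{1}{2}|p|^2\bigl(\dot\xi(\hat t) - \dot\zeta(\hat t)\bigr) \le \omega_{F,R}\bigl(\alpha|\hat x - \hat y|^2 + |\hat x - \hat y|\bigr) + L\,\bigl(u(\hat t,\hat x) - v(\hat t,\hat y)\bigr),$$
with $p = \alpha(\hat x - \hat y)$. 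The Lipschitz contribution is absorbed by the standard rescaling $u \mapsto e^{-Lt}u$, while $\omega_{F,R}(\cdot)$ is made to vanish in the limit $\alpha \to \infty$ by standard penalization together with the bound $\alpha|\hat x-\hat y|^2 \to 0$.

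The main obstacle is the rough term $\tfrac{1}{2}|p|^2(\dot\xi - \dot\zeta)$: a pointwise bound would require control of $\|\dot\xi - \dot\zeta\|_\infty$, which is incompatible with extension by sup-norm density. The fix, following Lions--Souganidis (as adapted in Friz--Gassiat--Lions--Souganidis), is to work with the time-integrated inequality and integrate by parts in $t$ against $\xi-\zeta$ rather than $\dot\xi-\dot\zeta$. This integration by parts requires a priori gradient bounds on $u, v$, which can be extracted by comparing with affine sub/supersolutions or by reducing via sup/inf-convolutions in $x$; it produces a correction term of order $|p|^2\,\|\xi-\zeta\|_\infty$ plus lower-order pieces controlled by Assumption \ref{asn:F}(2)--(3). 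Choosing $\psi$ to absorb these and then optimizing $\alpha, \varepsilon$ yields the modulus $\Phi\bigl(\|\xi - \zeta\|_\infty\bigr)$ with the claimed dependence and $\Phi(0^+) = 0$.

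Finally, the sharper estimate \eqref{eq:FpX} in the case $F = F(p,X)$ follows from the translation and constant-additive invariance of $F$, which makes the semigroup $S_F$ commute with $S_H(\delta)$ for each $\delta \in \R$. Setting $\delta := \sup_{s\in[0,T]}(\xi(s)-\zeta(s)) \ge 0$, one verifies that the function $(t,x) \mapsto S_F(0,t)\bigl(S_H(\delta)u_0\bigr)(x)$ is a supersolution to the equation driven by $\zeta$ with an initial condition comparable to $v_0$; direct comparison with $v$, followed by the Hopf--Lax representation of $S_H(\delta)u_0$ from Section \ref{sec:semiconvex}, produces the sup-convolution form on the right-hand side of \eqref{eq:FpX}.
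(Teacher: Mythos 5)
The paper does not actually prove Theorem \ref{thm:app_wp}: in Appendix \ref{sec:visc_soln} it is explicitly recalled as a citation of \cite[Theorems 1.2, 1.3]{FGLS16}. There is therefore no internal proof to compare against; what follows evaluates your sketch on its own merits against that cited argument.

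Your outline correctly isolates the crux of the matter, namely that after doubling variables the rough term $\tfrac12|p|^2(\dot\xi(\hat t)-\dot\zeta(\hat t))$ cannot be bounded pointwise in a way that survives passage to $C^0$ limits of the path. However, the fix you propose -- ``work with the time-integrated inequality and integrate by parts in $t$ against $\xi-\zeta$'' -- is not the mechanism used in the Lions--Souganidis/FGLS theory and, as stated, would not close. One cannot integrate by parts in $t$ because the viscosity inequalities only hold at a single maximum point $\hat t$ (or along a sparse set of times), not as a.e.~inequalities in $t$; and even on a formal level the boundary terms generate expressions involving $|p(t)|^2$ at endpoints that you have no control over. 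The actual device is to couple the spatial penalizer with the noise: one replaces $\tfrac{\alpha}{2}|x-y|^2$ by a time-dependent penalizer $\phi_\alpha(t,x,y)$ solving a Hamilton--Jacobi equation whose transport part exactly cancels the $\tfrac12|D_x\phi|^2\dot\xi-\tfrac12|D_y\phi|^2\dot\zeta$ contribution, and the $\|\xi-\zeta\|_\infty$ dependence then arises from sup-/inf-convolution estimates (essentially the Hopf--Lax flow estimates of Section \ref{sec:semiconvex}) applied to this penalizer. This is a genuinely different idea from integration by parts.

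A second gap is the claim that one can ``extract a priori gradient bounds on $u,v$.'' The theorem only assumes $u_0,v_0\in BUC(\R^N)$, $u\in BUSC$, $v\in BLSC$, and the modulus $\Phi$ is required to depend on the \emph{moduli of continuity} of $u_0,v_0$ (not Lipschitz constants). Lipschitz bounds are simply unavailable here. If you regularize the data by sup-/inf-convolution to get Lipschitz approximants you must then track the error through the comparison argument, and that error is precisely what re-introduces the modulus-of-continuity dependence that the theorem records; your sketch elides this. Finally, your route to \eqref{eq:FpX} is closer to right -- it does rest on invariance of $F=F(p,X)$ under translations and additive constants, combined with the Hopf--Lax representation of $S_H(\delta)$ -- but as written the intermediate function $(t,x)\mapsto S_F(0,t)\bigl(S_H(\delta)u_0\bigr)(x)$ is not in general a supersolution of the $\zeta$-driven SPDE: the correct statement compares the two solutions via the commutation of $S_F$ with $S_H(\pm\delta)$ on both sides, using the one-sided estimates $\max_{[0,T]}(\xi-\zeta)$ and $\max_{[0,T]}(\zeta-\xi)$ separately. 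The convention $0/0=0$, $1/0=+\infty$ in \eqref{eq:FpX} is a hint that this one-sided structure is essential and cannot be replaced by a single choice of $\delta$.
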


%

\bibliographystyle{plain}
\def\cprime{$'$}

\end{document}